\let\oldmarginpar\marginpar
\renewcommand\marginpar[1]
\newcommand{\la}{\langle}
\newcommand{\ra}{\rangle}
\newtheorem{theorem}{\bf Theorem}[section]
\newtheorem{lemma}[theorem]{\bf Lemma}
\newtheorem{remark}[theorem]{\bf Remark}
\newtheorem{corollary}[theorem]{\bf Corollary}
\newcommand{\CC}{{\Bbb C}}
\newcommand{\CP}{{\Bbb CP}}
\newcommand{\FF}{{\Bbb F}}
\newcommand{\GG}{{\Bbb G}}
\newcommand{\QQ}{{\Bbb Q}}
\newcommand{\RR}{{\Bbb R}}
\newcommand{\ZZ}{{\Bbb Z}}
\newcommand{\ggreat}{>\kern-.7ex>}
\newcommand{\ssmall}{<\kern-.7ex<}
\newcommand{\qu}{/\kern-.7ex/}
\newcommand{\exh}{\to\kern-1.8ex\to}
\newcommand{\aA}{{\EuScript{A}}}
\newcommand{\bB}{{\EuScript{B}}}
\newcommand{\cC}{{\EuScript{C}}}
\newcommand{\dD}{{\EuScript{D}}}
\newcommand{\gG}{{\EuScript{G}}}
\newcommand{\hH}{{\EuScript{H}}}
\newcommand{\jJ}{{\EuScript{J}}}
\newcommand{\mM}{{\EuScript{M}}}
\newcommand{\sS}{{\EuScript{S}}}
\newcommand{\tT}{{\EuScript{T}}}
\newcommand{\xX}{{\EuScript{X}}}
\newcommand{\GL}{\operatorname{GL}}
\newcommand{\Aut}{\operatorname{Aut}}
\newcommand{\Diff}{\operatorname{Diff}}
\newcommand{\e}{\operatorname{e}}
\newcommand{\Ham}{\operatorname{Ham}}
\newcommand{\Hom}{\operatorname{Hom}}
\newcommand{\Id}{\operatorname{Id}}
\newcommand{\Jor}{\operatorname{Jor}}
\newcommand{\Ker}{\operatorname{Ker}}
\newcommand{\Map}{\operatorname{Map}}
\newcommand{\Perm}{\operatorname{Perm}}
\newcommand{\rk}{\operatorname{rk}}
\newcommand{\SO}{\operatorname{SO}}
\newcommand{\SU}{\operatorname{SU}}
\newcommand{\Symp}{\operatorname{Symp}}
\newcommand{\U}{\operatorname{U}}
\newcommand{\ov}{\overline}
\newcommand{\wh}{\widehat}
\newcommand{\ord}{\operatorname{ord}}
\newcommand{\imag}{{\mathbf i}}
\title[Finite subgroups of Ham and Symp]{Finite subgroups of Ham and Symp}
\author{Ignasi Mundet i Riera}
\address{Departament d'\`Algebra i Geometria\\
Facultat de Matem\`atiques\\
Universitat de Barcelona\\
Gran Via de les Corts Catalanes 585\\
08007 Barcelona \\
Spain}
\email{ignasi.mundet@ub.edu}
\date{June 13, 2017}
\subjclass[2010]{57S17,53D05}
\thanks{This work has been partially supported by the (Spanish) MEC Project MTM2012-38122-C03-02.}
\begin{document}

\maketitle

\begin{abstract}
Let $(X,\omega)$ be a compact symplectic manifold of dimension
$2n$ and let $\Ham(X,\omega)$ be its group of Hamiltonian
diffeomorphisms. We prove the existence of a constant $C$,
depending on $X$ but not on $\omega$, such that any finite
subgroup $G\subset\Ham(X,\omega)$ has an abelian subgroup
$A\subseteq G$ satisfying $[G:A]\leq C$, and $A$ can be
generated by $n$ elements or fewer. If $b_1(X)=0$ we prove an
analogous statement for the entire group of symplectomorphisms
of $(X,\omega)$. If $b_1(X)\neq 0$ we prove the
existence of a constant $C'$ depending only on $X$ such that any
finite subgroup $G\subset\Symp(X,\omega)$ has a subgroup
$N\subseteq G$ which is either abelian or $2$-step nilpotent
and which satisfies $[G:N]\leq C'$.

These results are deduced from the classification of the finite simple groups, the topological rigidity of hamiltonian loops, and the following theorem, which we prove in this paper. Let $E$ be a complex vector bundle over a compact, connected, smooth and oriented manifold $M$; suppose that the real rank of $E$ is equal to the dimension of $M$, and that $\langle e(E),[M]\rangle\neq 0$, where $e(E)$ is the Euler class of $E$; then there exists a constant $C''$ such that, for any prime $p$ and any finite $p$-group $G$ acting on $E$ by vector bundle automorphisms preserving an almost complex structure on $M$, there is a subgroup $G_0\subseteq G$ satisfying $M^{G_0}\neq\emptyset$ and $[G:G_0]\leq C''$.
\end{abstract}

\tableofcontents

\section{Introduction}

\subsection{}
For any symplectic manifold $(X,\omega)$ denote by
$\Ham(X,\omega)$ (resp. $\Symp(X,\omega)$) the group of
Hamiltonian (resp. symplectic) diffeomorphisms of $(X,\omega)$.
In this paper we are interested on the finite subgroups of
$\Ham(X,\omega)$ or $\Symp(X,\omega)$ for arbitrary compact
$X$, that is, the Hamiltonian or symplectic finite
transformation groups of compact symplectic manifolds.

Let $\Symp_0(X,\omega)\subseteq\Symp(X,\omega)$
and $\Diff_0(X)\subseteq\Diff(X)$ denote the identity components.
Let
$$\hH(X,\omega),\quad \sS(X,\omega),\quad \dD(X)$$
denote the set of isomorphism classes of
finite subgroups of $\Ham(X,\omega)$, $\Symp_0(X,\omega)$, $\Diff_0(X)$ respectively.
There are obvious inclusions
$\hH(X,\omega)\subseteq \sS(X,\omega)\subseteq \dD(X)$ for any $X$.
A consequence of our results combined with those in \cite{M4} is
the existence of infinitely many compact symplectic manifolds
$(X,\omega)$ for which both $\hH(X,\omega)$ and $\dD(X)\setminus \hH(X,\omega)$ are infinite.
We also deduce the existence of infinitely many compact symplectic manifolds $(X,\omega)$
for which both $\hH(X,\omega)$ and $\sS(X,\omega)\setminus \hH(X,\omega)$ are infinite.

One can easily construct plenty of examples of Hamiltonian finite
transformation groups of compact manifolds by taking any compact symplectic manifold
$X$ endowed with a Hamiltonian action of a connected Lie group
$K$ and restricting the action to the finite subgroups of $K$.
In particular for any finite group $G$ there is some compact
symplectic manifold $X$ on which $G$ acts effectively by
Hamiltonian diffeomorphisms (embed $G$ in $\U(n)$ for some $n$
and take $X$ to be any Hamiltonian $\U(n)$-manifold, e.g.
$\CP^n$). Of course, this construction is far from capturing
all Hamiltonian finite transformation groups: one should not
expect an arbitrary finite subgroup of $\Ham(X,\omega)$ to be
included in a compact connected subgroup of $\Ham(X,\omega)$.
On the negative side, there are examples of compact symplectic
manifolds whose group of Hamiltonian diffeomorphisms is known
to contain no nontrivial finite subgroup; this is the case, for
example, if $\pi_2=0$, as proved by Polterovich \cite[Theorem
1.2]{Pol}.

Roughly speaking, our main result says that for any compact
symplectic manifold $X$ there is an upper bound (depending only
on the topology of $X$) on {\it how much nonabelian} the finite
subgroups of $\Ham(X,\omega)$ are. If $X$ is simply connected
we prove the same property for finite subgroups of
$\Symp(X,\omega)$. In contrast, there are many examples of
compact symplectic manifolds $(X,\omega)$ admitting no such
bound for finite subgroups of $\Diff(X)$, so the properties we
prove on $\Ham(X,\omega)$ are genuinely symplectic. Similarly,
if $X$ is a non simply connected symplectic manifold it is
known \cite{M5} that there is in general no upper bound
depending only on $X$ on {\it how much nonabelian} the finite
subgroups of $\Symp(X,\omega)$ are (although an upper bound
depending on $\omega$ might exist in all situations). Hence our
result on $\Ham(X,\omega)$ is sharp.

As a measure of how much nonabelian a finite group is, we take the index of
an abelian subgroup of maximal size. So the question we are interested in is
the so-called Jordan property for Hamiltonian and symplectomorphism groups, which we
now recall.

Let $C$ be a natural number. A group $\gG$ is $C$-Jordan if
each finite subgroup $G\subseteq\gG$ has an abelian subgroup
$A\subseteq G$ satisfying $[G:A]\leq C$. A group $\gG$ is
Jordan if it is $C$-Jordan for some $C$. This terminology,
which was introduced by V. Popov in \cite{Po0}, is inspired by
a classical theorem of C. Jordan \cite{J} which states that
$\GL(n,\CC)$ is Jordan for any $n$.

We will consider the following more refined notion. Let $C,d$
be natural numbers. A group $\gG$ is $(C,d)$-Jordan if each
finite subgroup $G\subseteq\gG$ has an abelian subgroup
$A\subseteq G$ satisfying $[G:A]\leq C$ and such that $A$ can
be generated by $d$ (or fewer) elements.

Some twenty years ago \'E. Ghys raised the question of whether
the diffeomorphism group of any smooth compact manifold is
Jordan. A number of papers have recently been devoted to Ghys's
question (see e.g. \cite{M3} and the references therein), and
it is known that the answer is affirmative in many cases and
negative in many other ones \cite{CPS,M4}, although a
description of which compact smooth manifolds have Jordan
diffeomorphism group seems to be at present a widely open
problem. As we will see, this is in sharp contrast with the
analogous question for Hamiltonian diffeomorphism groups of
compact symplectic manifolds, which are always Jordan.
Regarding our more refined notion of Jordanness, we remark that
a theorem of Mann and Su \cite{MS} (see Theorem
\ref{thm:MS} below) implies that if $X$ is a compact smooth
manifold, $\gG$ is any subgroup of $\Diff(X)$,  and $\gG$ is
$C$-Jordan for some $C$, then $\gG$ is also $(C,d)$-Jordan for
some $d$ depending only on $X$.

Analogous questions have been studied in algebraic geometry.
J.--P. Serre \cite{Se} proved that the classical Cremona group
is Jordan, and asked whether its higher dimensional analogues
are also Jordan. More generally, one may ask whether the
automorphism or birational transformation groups of general
affine or projective varieties are Jordan (see e.g. the survey
\cite{Po2} and, more recently, \cite{Bi,MZ,PS1,PS2}).

The question has also been studied previously in symplectic
geometry. The symplectomorphism group of $(T^2\times
S^2,\omega)$ has been proved to be Jordan for any choice of
symplectic structure $\omega$ in \cite{M5}. An interesting
consequence of this is that it gives an example of a symplectic
manifold with Jordan symplectomorphism group but non Jordan
diffeomorphism group: in fact, $\Diff(T^2\times S^2)$ was the
first given example of a non Jordan diffeomorphism group of a
compact manifold, see \cite{CPS}.

In this paper we only consider manifolds without boundary, so
for us {\it compact manifold} is synonym of {\it closed
manifold}.

\subsection{Finite subgroups of $\Ham(X,\omega)$}
This is our first main result.

\begin{theorem}
\label{thm:main-ham} Let $(X,\omega)$ be a $2n$-dimensional
compact and connected symplectic manifold. Then
$\Ham(X,\omega)$ is $(C,n)$-Jordan for some $C$ depending only
on $H^*(X)$.
\end{theorem}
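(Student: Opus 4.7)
The plan is to combine the three ingredients announced in the abstract: the Euler-class fixed-point theorem, the topological rigidity of Hamiltonian loops, and the classification of finite simple groups (CFSG).

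Fix a finite subgroup $G\subseteq\Ham(X,\omega)$. Since $G\subseteq\Diff_0(X)$, every $g\in G$ acts trivially on $H^*(X;\ZZ)$. Averaging an $\omega$-compatible almost complex structure over $G$ yields a $G$-invariant such $J$, so $TX$ becomes a $G$-equivariant complex vector bundle of rank $n$. The key reduction is linearization at fixed points: if $H\subseteq G$ has some $x\in X^H$, the isotropy representation $H\to\U(n)$ is faithful (any smooth finite action on a connected manifold is faithful on the tangent space at each fixed point), and Jordan's classical theorem inside $\U(n)$ produces an abelian subgroup of $H$ of index at most some universal $C_n$, generated by at most $n$ elements since it can be conjugated into a maximal torus.

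It thus suffices to produce, for each prime $p$ and each $p$-subgroup $P\subseteq G$, a subgroup $P_0\subseteq P$ of uniformly bounded index with $X^{P_0}\neq\emptyset$. For this I invoke the Euler-class theorem from the abstract, applied to a $P$-equivariant complex vector bundle $E\to X$ of real rank $2n$ with $\langle e(E),[X]\rangle\neq 0$. When $\chi(X)\neq 0$ one takes $E=TX$ directly. Otherwise one constructs $E$ from a prequantum-type line bundle $L\to X$ whose first Chern class is a nonzero rational multiple of $[\omega]$ (passing to a nearby rational class if necessary, which does not affect the topological conclusions), so that $\langle e(L^{\oplus n}),[X]\rangle$ is a nonzero multiple of the symplectic volume. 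The topological rigidity of Hamiltonian loops is precisely what allows the $P$-action on $X$ to be lifted, after passing to a subgroup of uniformly bounded index, to a $P$-action on $L$: Hamiltonian diffeomorphisms admit canonical lifts to any such prequantum line bundle, and the finite obstruction to coherently assembling these lifts into a group action is controlled.

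Once every Sylow $p$-subgroup of $G$ is shown to be $(C_0,n)$-Jordan with $C_0$ depending only on $H^*(X)$, CFSG promotes this uniformly to $G$ itself. In the spirit of \cite{CPS,M3,M4}, the faithful isotropy representations into $\U(n)$ bound the nonabelian composition factors (in particular the alternating ones) that can appear inside $G$, and a standard induction on the composition length then produces an abelian subgroup of $G$ of bounded index generated by $n$ elements. I expect the main obstacle to be the previous step in the case $\chi(X)=0$: constructing a $P$-equivariant bundle with nonzero Euler number is precisely where the Hamiltonian hypothesis, as opposed to merely symplectic, is used in an essential way, consistent with the fact noted in the introduction that the analogous statement fails for $\Symp_0$ when $b_1(X)\neq 0$.
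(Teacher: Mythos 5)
Your first half follows the paper's route almost exactly: averaging $J$, linearizing at fixed points to get a faithful representation into $\U(n)$ and hence (via Jordan's classical theorem, Corollary \ref{cor:Jordan-classic}) an abelian subgroup of bounded index generated by $n$ elements; and, for a $p$-subgroup $P$, lifting the action to a line bundle $L$ with $c_1(L)^n\neq 0$ obtained from a rational class near $[\omega]$, then applying the Euler-class fixed-point theorem (Theorem \ref{thm:main-fixed-point}) to $L^{\oplus n}$. This is Theorem \ref{thm:fixed-pt-hamiltonian}. One imprecision here: the rigidity of Hamiltonian loops (Theorem \ref{thm:lift-Ham-action-line-bundle}) does not produce a lift of a bounded-index \emph{subgroup} of $P$; it produces an action of a central extension $1\to H\to P'\to P\to 1$ with $H$ cyclic of order dividing $|P|$ (so not bounded). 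One applies the fixed-point theorem to $P'$ and pushes the resulting abelian subgroup down to $P$. This is harmless but it is not what you wrote.

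The genuine gap is in your last step. Knowing that every Sylow $p$-subgroup of $G$ is $(C_0,n)$-Jordan does \emph{not} allow CFSG to ``promote this uniformly to $G$'': the boosting theorem of Mundet--Turull (Theorem \ref{thm:MT}) requires the Jordan property not for the $p$-subgroups of the family but for the subgroups $T=PQ$ with $Q$ a normal Sylow $q$-subgroup and $P$ a Sylow $p$-subgroup, $p\neq q$, and the paper explicitly records that the statement becomes \emph{false} if one replaces this class by the class of $p$-subgroups. Your appeal to bounding nonabelian composition factors and inducting on composition length does not repair this: the obstruction lives in solvable groups $PQ$ with abelian Sylow subgroups that fail to have abelian subgroups of bounded index. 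Closing this gap is the content of Theorem \ref{thm:test-Jordan}, and it uses the fixed-point conclusion for $p$-groups (not merely their Jordanness) in an essential geometric way: one passes to a normal abelian $Q_b\unlhd T$ of bounded index with $X^{Q_b}\neq\emptyset$, decomposes the normal bundle of a component of $X^{Q_b}$ into character subbundles for $Q_b$, passes to a bounded-index subgroup $P_c$ of $P$ fixing the (boundedly many) occurring characters, and then uses faithfulness of the isotropy representation to conclude that $P_c$ and $Q_b$ commute, so $P_cQ_b$ is the desired abelian subgroup. Without this step, or some substitute for it, your argument does not yield the theorem.
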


Here $H^*(X)$ denotes the integral cohomology of $X$.
In particular, $C$ is independent of $\omega$. As mentioned earlier, Theorem
\ref{thm:main-ham} is not a theorem on smooth finite
transformation groups in symplectic disguise. Indeed, there are
plenty of examples of compact symplectic manifolds whose
diffeomorphism group is not Jordan, and in many cases even the
identity component of the diffeomorphism group fails to be
Jordan. So we can not replace
$\Ham(X,\omega)$ by $\Diff_0(X)$.
In fact, we will see below
that one can neither replace $\Ham(X,\omega)$ by $\Symp_0(X,\omega)$.

For example, if $Y$ is any compact smooth manifold supporting an
effective action of $\SU(2)$ or $\SO(3,\RR)$, then
$\Diff_0(T^2\times Y)$
is not Jordan \cite{CPS,M4}; more
concretely, for any prime $p$ there is a subgroup of
$\Diff_0(T^2\times Y)$ which is
isomorphic to the Heisenberg group over $\FF_p$.\footnote{Actually the results in \cite{M4} refer to
the full diffeomorphism group, but it is easy to check that all
group actions that are defined there give rise to finite
subgroups of the identity component of $\Diff$.}
This implies that if $Y$ is a compact symplectic
manifold supporting an effective smooth action of $\SU(2)$ or
$\SO(3,\RR)$ then, denoting by $\Omega$ the set of all symplectic forms on $T^2\times Y$,
$$\dD(T^2\times Y)\setminus\bigcup_{\omega\in\Omega}\hH(T^2\times Y,\omega)\quad\text{ contains infinitely many elements.}$$

For some symplectic manifolds $(X,\omega)$ Theorem
\ref{thm:main-ham}  implies that $\Symp_0(X,\omega)$
is Jordan:

\begin{corollary}
Let $\dim X=2n$. If the flux homomorphism
$$\pi_1(\Symp_0(X,\omega))\to H^1(X;\RR)$$ vanishes, then
$\Symp_0(M,\omega)$ is $(C,n)$-Jordan for some $C$ depending
only on $H^*(X;\ZZ)$.
\end{corollary}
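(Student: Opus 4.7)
The plan is to reduce the statement to Theorem \ref{thm:main-ham} by showing that, under the vanishing hypothesis on the flux homomorphism, every finite subgroup of $\Symp_0(X,\omega)$ actually lies inside $\Ham(X,\omega)$.

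First I would recall the flux construction. The flux homomorphism can be defined on the universal cover $\widetilde{\Symp_0(X,\omega)}$ as a homomorphism to $H^1(X;\RR)$, and its image on $\pi_1(\Symp_0(X,\omega))$ is the flux group $\Gamma\subseteq H^1(X;\RR)$. The induced map $F:\Symp_0(X,\omega)\to H^1(X;\RR)/\Gamma$ then has kernel exactly $\Ham(X,\omega)$, giving the short exact sequence
$$1\longrightarrow \Ham(X,\omega)\longrightarrow \Symp_0(X,\omega)\stackrel{F}{\longrightarrow} H^1(X;\RR)/\Gamma\longrightarrow 1.$$
Under the hypothesis of the corollary, $\Gamma=0$, so the quotient is the torsion-free group $H^1(X;\RR)$.

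Next, let $G\subseteq\Symp_0(X,\omega)$ be a finite subgroup. Its image $F(G)$ is a finite subgroup of the torsion-free abelian group $H^1(X;\RR)$, hence trivial. Therefore $G\subseteq \Ker F=\Ham(X,\omega)$.

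Finally, apply Theorem \ref{thm:main-ham}: it provides an abelian subgroup $A\subseteq G$, generated by at most $n$ elements, with $[G:A]\leq C$, where $C$ depends only on $H^*(X;\ZZ)$ (and in particular is independent of $\omega$). Since this works for every finite $G\subseteq\Symp_0(X,\omega)$, the group $\Symp_0(X,\omega)$ is $(C,n)$-Jordan with the same constant $C$.

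There is essentially no obstacle here beyond recalling the exact sequence relating $\Ham$ and $\Symp_0$ through the flux homomorphism; the content of the corollary is entirely in Theorem \ref{thm:main-ham}, and what is being added is the standard observation that finiteness forces the flux class to be torsion, which is zero once $\Gamma$ vanishes.
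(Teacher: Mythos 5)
Your proposal is correct and follows essentially the same route as the paper: use the flux exact sequence, observe that when $\Gamma=0$ the target $H^1(X;\RR)$ is torsion-free so every finite subgroup of $\Symp_0(X,\omega)$ has trivial flux and lands in $\Ham(X,\omega)$, then invoke Theorem \ref{thm:main-ham}. The only difference is that you spell out the torsion-free argument that the paper leaves implicit.
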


See \cite[\S 10.2]{McDS} for the definition of the flux
homomorphism.

\begin{proof}
If $\Gamma$ denotes the image of the flux map
$\pi_1(\Symp_0(X,\omega))\to H^1(X;\RR)$ then there is an exact
sequence
$$\Ham(X,\omega)\to \Symp_0(X,\omega)\stackrel{F}{\longrightarrow} H^1(X;\RR)/\Gamma,$$
where $F$ denotes the flux (see \cite[Corollary 10.18]{McDS}).
If $\Gamma=0$ then any finite subgroup $G\subset
\Symp_0(X,\omega)$ satisfies $F(G)=0$, so
$G\subset\Ham(X,\omega)$.
\end{proof}

There exist many examples of manifolds $(X,\omega)$
with nonzero first Betti number  for which the flux homomorphism
$\pi_1(\Symp_0(X,\omega))\to H^1(X;\RR)$ vanishes, see e.g.
\cite{KKM}.

\subsection{Finite subgroups of $\Symp(X,\omega)$}
It is natural to wonder to what extent Theorem \ref{thm:main-ham} is true if one replaces $\Ham(X,\omega)$ by the entire group of symplectomorphisms of $X$. The answer turns out to be yes in some cases.

\begin{theorem}
\label{thm:main-sympl} Let $(X,\omega)$ be a $2n$-dimensional
compact and connected symplectic manifold satisfying
$b_1(X)=0$. Then $\Symp(X,\omega)$ is $(C,n)$-Jordan for some
$C$ which only depends on $H^*(X)$.
\end{theorem}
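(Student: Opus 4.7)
The plan is to reduce Theorem \ref{thm:main-sympl} to Theorem \ref{thm:main-ham}. Under the assumption $b_1(X)=0$ we have $H^1(X;\RR)=0$, so the flux homomorphism vanishes trivially, and the corollary following Theorem \ref{thm:main-ham} identifies $\Symp_0(X,\omega)$ with $\Ham(X,\omega)$. Hence the content of Theorem \ref{thm:main-sympl} beyond Theorem \ref{thm:main-ham} is precisely the control of the image of a finite subgroup $G\subseteq\Symp(X,\omega)$ in the symplectic mapping class group $\pi_0(\Symp(X,\omega))$. The goal therefore reduces to producing, inside any finite $G\subseteq\Symp(X,\omega)$, a subgroup $K$ of index bounded in terms of $H^*(X)$ with $K\subseteq\Ham(X,\omega)$, and then applying Theorem \ref{thm:main-ham} to $K$.

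The first step uses the natural representation $G\to\Aut(H^*(X;\ZZ))$. Since $H^*(X;\ZZ)$ is finitely generated of rank $N$ depending only on $X$, the target embeds in some $\GL_N(\ZZ)$, and Minkowski's lemma bounds the order of any finite subgroup by a constant $C_1=C_1(N)$. Setting $K$ to be the kernel of this representation restricted to $G$ gives $[G:K]\leq C_1$, and $K$ acts trivially on $H^*(X;\ZZ)$.

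The second and main step is to show that $K$ (after replacing by a further subgroup of bounded index if necessary) is contained in $\Symp_0(X,\omega)=\Ham(X,\omega)$. My approach is to average a compatible almost complex structure over $K$ to obtain a $K$-invariant $J$, and then to form the 2-form $\omega_K=|K|^{-1}\sum_{k\in K}k^*\omega$; this is $K$-invariant, tames $J$ (because each $k^*\omega$ tames the $K$-invariant $J$ and taming is a convex condition), and is cohomologous to $\omega$ because $K$ acts trivially on $H^2(X;\ZZ)$. Moser's theorem along the linear path from $\omega$ to $\omega_K$ produces a diffeomorphism $\phi\in\Diff_0(X)$ with $\phi^*\omega_K=\omega$, so that $\phi^{-1}K\phi\subseteq\Symp(X,\omega)$ is a conjugate of $K$ sitting inside $\Symp(X,\omega)$ and smoothly isotopic to it. The delicate point is to upgrade this information to a \emph{symplectic} isotopy from each $k\in K$ to the identity; this is where the topological rigidity of Hamiltonian loops enters, since it constrains precisely the flux-type obstruction that distinguishes smooth from symplectic isotopies, and which under $b_1(X)=0$ has nowhere to live.

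Once $K\subseteq\Ham(X,\omega)$ is established, Theorem \ref{thm:main-ham} gives an abelian subgroup $A\subseteq K$ with $[K:A]\leq C_2$ and $A$ generated by $n$ or fewer elements, for some $C_2$ depending only on $H^*(X)$; then $[G:A]\leq C_1C_2$, which finishes the proof with $C=C_1C_2$. The main obstacle is clearly the second step: a priori a finite-order symplectomorphism acting trivially on $H^*(X;\ZZ)$ need not be symplectically isotopic to the identity, so the argument must exploit both the finiteness of $K$ (to apply Moser and averaging equivariantly) and the hypothesis $b_1(X)=0$ (to kill the flux obstruction) in an essential way, via the topological rigidity theorem for Hamiltonian loops.
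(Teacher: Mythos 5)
Your reduction to Theorem \ref{thm:main-ham} has a genuine gap at its central step, and it is not the route the paper takes. The first step (passing to the kernel $K$ of $G\to\Aut(H^*(X;\ZZ))$, of index bounded via Minkowski) is fine, but the second step --- showing that $K$, up to bounded index, lies in $\Symp_0(X,\omega)=\Ham(X,\omega)$ --- is unsupported. Your averaging-plus-Moser construction only shows that $K$ preserves a symplectic form cohomologous to $\omega$ and produces a conjugate of $K$ inside $\Symp(X,\omega)$; since $K$ was a subgroup of $\Symp(X,\omega)$ to begin with, this yields no new information, and in particular no isotopy (smooth or symplectic) from an element $k\in K$ to the identity. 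The obstruction to $k$ lying in $\Symp_0(X,\omega)$ is its class in $\pi_0(\Symp(X,\omega))$, which can be nontrivial even for simply connected $X$ and for finite-order $k$ acting trivially on cohomology; the flux homomorphism is irrelevant to this, since it only separates $\Ham(X,\omega)$ from $\Symp_0(X,\omega)$ \emph{inside} the identity component. Likewise the topological rigidity of Hamiltonian loops (Lalonde--McDuff--Polterovich) is a statement about $\pi_1(\Ham(X,\omega))$; in the paper it is used only to lift actions of groups already known to be Hamiltonian to line bundles (Theorem \ref{thm:lift-Ham-action-line-bundle}), and it cannot detect or kill the class of a symplectomorphism in $\pi_0(\Symp(X,\omega))$.

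The paper's proof never passes through $\Ham(X,\omega)$ or $\Symp_0(X,\omega)$. Given a finite $G\subset\Symp(X,\omega)$ one averages to obtain a $G$-invariant almost complex structure $J$, so that $G\subset\Diff(X,J)$ and $[\omega]^n\neq 0$; Theorem \ref{thm:main-sympl} then follows from Theorem \ref{thm:main-ac-str} (see Remark \ref{rmk:ac-structures}). That theorem is proved by combining the fixed-point theorem for $p$-subgroups (Theorem \ref{thm:fixed-pt-symplectic}) with the Jordan criterion of Theorem \ref{thm:test-Jordan}, which rests on the classification of finite simple groups via \cite{MT}. The hypothesis $b_1(X)=0$ enters not to control isotopy classes but to lift a finite $p$-group action to a complex line bundle $L$ with $\la[X],c_1(L)^n\ra\neq 0$ (Theorem \ref{thm:lift-action-line-bundle}: the relevant $\mu_d$-covering has trivial monodromy because $b_1(X)=0$), after which the vector-bundle fixed-point theorem \ref{thm:main-fixed-point} and Jordan's classical theorem apply. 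To repair your approach you would need an entirely different argument for the containment $K\subseteq\Ham(X,\omega)$, and none is available; the whole point of Theorems \ref{thm:main-ac-str} and \ref{thm:fixed-pt-symplectic} is to handle finite subgroups of $\Symp(X,\omega)$ that need not meet the identity component.
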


The situation for manifolds $(X,\omega)$ with $b_1(X)\neq 0$ is
more involved. The case $X=T^2\times S^2$ has been studied in
detail in \cite{M5}, and it turns out that for any symplectic
form $\omega$ on $T^2\times S^2$ the group of
symplectomorphisms $\Symp(T^2\times S^2,\omega)$ is $C$-Jordan
for some $C=C(\omega)$ depending on $\omega$, but there is no
upper bound for $C(\omega)$ as $\omega$ moves along the set of
symplectic forms on $T^2\times S^2$ . This is related to the
fact that $\Diff(T^2\times S^2)$ is not Jordan, and a similar
phenomenon occurs for any product $T^2\times Y$ with $Y$
symplectic and supporting a Hamiltonian action of $\SU(2)$:
namely, for any prime $p$ there is a symplectic form $\omega$
on $T^2\times Y$ such that $\Symp(T^2\times Y,\omega)$ contains
a subgroup isomorphic to the Heisenberg group over $\FF_p$
(this is a straightforward generalization of \cite[\S 3]{M5}).

Hence, the hypothesis $b_1(X)=0$ cannot be removed in Theorem
\ref{thm:main-sympl}, and from this perspective Theorem
\ref{thm:main-ham} is optimal. On the other hand, the author
does not know any example of compact symplectic manifold whose
symplectomorphism group fails to be Jordan (compactness is
crucial here, since clearly the symplectomorphism group of the
cotangent bundle of $T^2\times S^2$ is not Jordan). It is also
apparently unknown at present whether there exists a simply
connected compact smooth manifold whose diffeomorphism group is
not Jordan.

We actually prove a stronger result than Theorem
\ref{thm:main-sympl}. To state it, the following notation will
be useful: if $X$ is a smooth manifold and $J$ is an almost
complex structure on $X$ then we denote by
$$\Diff(X,J)\subset\Diff(X)$$
the group of all diffeomorphisms of $X$ preserving $J$.

\begin{theorem}
\label{thm:main-ac-str} Let $(X,J)$ be a $2n$-dimensional
almost complex, compact and connected smooth manifold
satisfying $b_1(X)=0$, and assume that there exists $\omega\in
H^2(X;\RR)$ such that $\omega^n\neq 0$. Then $\Diff(X,J)$ is
$(C,n)$-Jordan for a constant $C$ depending only on $H^*(X)$.
\end{theorem}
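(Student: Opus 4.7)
The plan follows the scheme of Theorems~\ref{thm:main-ham} and~\ref{thm:main-sympl}. Via the CFSG-based reduction used throughout the paper, it suffices to produce $C=C(H^{*}(X))$ such that every finite $p$-subgroup $G\subseteq\Diff(X,J)$ has an abelian subgroup of index $\le C$ generated by at most $n$ elements, uniformly in the prime $p$. The structural steps are: pass to a subgroup of $G$ acting trivially on $H^{*}(X;\ZZ)$; produce a $G$-equivariant complex vector bundle $E$ of real rank $2n$ with $\langle e(E),[X]\rangle\neq 0$; apply the Euler-class theorem from the abstract to obtain a finite-index subgroup with nonempty fixed set in $X$; linearise at a fixed point; and conclude via the classical Jordan property of $\GL(n,\CC)$.

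Fix a finite $p$-subgroup $G\subseteq\Diff(X,J)$. The image of $\Diff(X)$ in $\GL(H^{*}(X;\ZZ))$ has finite subgroups of uniformly bounded order by Minkowski, so after replacing $G$ by a subgroup $G_{1}\subseteq G$ of index $\le b(H^{*}(X))$ we may assume $G_{1}$ acts trivially on integral cohomology. Since the set $\{\omega^{n}\neq 0\}$ is open in $H^{2}(X;\RR)$, fix, independently of $G$, an integer class $\alpha\in H^{2}(X;\ZZ)$ with $\alpha^{n}\neq 0$. Let $L$ be the smooth complex line bundle with $c_{1}(L)=\alpha$, and set
\[
L'\;:=\;\bigotimes_{g\in G_{1}} g^{*}L.
\]
The permutation of tensor factors $g\mapsto gh$ endows $L'$ with a canonical $G_{1}$-equivariant structure, bypassing the $H^{3}(G_{1};\ZZ)$ obstruction to lifting. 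Since $G_{1}$ acts trivially on cohomology, $c_{1}(L')=|G_{1}|\alpha$; thus $E:=(L')^{\oplus n}$ is a $G_{1}$-equivariant complex vector bundle of complex rank $n$ (real rank $2n=\dim X$), with
\[
\langle e(E),[X]\rangle=|G_{1}|^{n}\langle\alpha^{n},[X]\rangle\neq 0,
\]
while $G_{1}$ preserves $J$. The Euler-class theorem of the abstract, applied to $(X,E)$, then produces $G_{0}\subseteq G_{1}$ with $[G_{1}:G_{0}]\le C''$ and $X^{G_{0}}\neq\emptyset$. Pick $x\in X^{G_{0}}$: averaging a Riemannian metric and applying the exponential map (Bochner) linearises the $G_{0}$-action, so the isotropy representation $G_{0}\hookrightarrow\GL(T_{x}X,J_{x})\cong\GL(n,\CC)$ is faithful. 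Jordan's classical theorem then gives an abelian $A\subseteq G_{0}$ of index bounded in $n$ alone; simultaneous diagonalisation of finite-order commuting elements shows $A\hookrightarrow(\CC^{*})^{n}$, so $A$ is generated by at most $n$ elements. Chaining $[G:G_{1}]$, $[G_{1}:G_{0}]$, $[G_{0}:A]$ yields the desired bound.

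The principal difficulty is to verify that the constant $C''$ delivered by the Euler-class theorem depends only on $(X,\alpha)$ and not on $G_{1}$, even though $E$ varies with $|G_{1}|$ through $c_{1}(L')=|G_{1}|\alpha$; one needs, in the proof of that theorem, that its constant be uniform across bundles whose first Chern class is a positive integer multiple of $\alpha$, or else that a suitable uniform reformulation be extracted. The hypothesis $b_{1}(X)=0$ (hence $H^{1}(X;\ZZ)=0$) enters precisely here, by killing the $d_{2}$ obstruction in the Leray--Serre spectral sequence of $X\to X\times_{G_{1}}EG_{1}\to BG_{1}$ and streamlining the uniformity analysis. Once this is secured, the remaining ingredients---the cohomological Minkowski reduction, the orbit line-bundle construction, Bochner's linearisation, and Jordan for $\GL(n,\CC)$---are standard.
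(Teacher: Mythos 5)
Your overall architecture (reduce to $p$-groups, build an equivariant rank-$n$ complex bundle with nonzero Euler number, apply the fixed-point theorem, linearise, invoke Jordan) is the paper's architecture, but there are two genuine gaps. The first is the opening reduction. The CFSG-based criterion (Theorem \ref{thm:MT}) does \emph{not} reduce the Jordan property to the class of $p$-subgroups: it reduces it to the class $\tT(\cC)$ of groups $T=PQ$ with $P$ a Sylow $p$-subgroup and $Q$ a \emph{normal} Sylow $q$-subgroup for distinct primes $p,q$, and the paper explicitly warns that the statement becomes false if $\tT(\cC)$ is replaced by the $p$-groups in $\cC$. Your proposal never addresses these mixed groups. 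The bridge is Theorem \ref{thm:test-Jordan}, whose input is precisely the fixed-point property for $p$-groups that you do establish, but whose proof requires a further geometric argument (intersecting conjugates of $Q_a$ to get a normal $Q_b$ with $X^{Q_b}\neq\emptyset$, decomposing the normal bundle of $X^{Q_b}$ by characters of $Q_b$, and showing that a bounded-index subgroup of $P$ commutes with $Q_b$). As written, your reduction step asserts something stronger than what the cited machinery delivers, and the missing case is exactly where the remaining work lies.

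The second gap is the one you flag yourself, and it is fatal for your construction rather than a verification to be "secured". With $L'=\bigotimes_{g\in G_1}g^*L\simeq L^{\otimes d}$, $d=|G_1|=p^m$, you get $\la \e(E),[X]\ra=d^{n}\la\alpha^{n},[X]\ra$, so $p^{nm}$ divides the Euler number. In Theorem \ref{thm:main-fixed-point} the constant is built from $k$, where $p^{k}$ is the smallest power of $p$ \emph{not} dividing $\la[X],\e(E)\ra$; here $k>nm$, and the bound $p^{r(k-1)}p^{kr^2}\cdots$ then grows without limit as $|G_1|$ grows, so no uniform $C''$ exists for your family of bundles. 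There is no "uniform reformulation" across bundles whose first Chern class is a positive multiple of $\alpha$: the $p$-adic valuation of the Euler number genuinely enters the proof. The paper's resolution is different: $b_1(X)=0$ is used (Theorem \ref{thm:lift-action-line-bundle}) to extract $d$-th roots of the canonical action on $L^{\otimes d}$ — the $\mu_d$-bundle $I_g\to X$ of $d$-th roots of each isomorphism $L_x^{\otimes d}\to L_{gx}^{\otimes d}$ is trivial because its monodromy $\pi_1(X)\to\mu_d$ vanishes — producing a cyclic central extension $G_L\to G_1$ acting on $L$ \emph{itself}. One then takes the $n$-fold fiber product of $G_L$ over $G_1$ acting on $E=L^{\oplus n}$, whose Euler number $\la v^n,[X]\ra$ is fixed once and for all, independent of $G$. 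Your observation that the permutation action gives a canonical equivariant structure on $L^{\otimes d}$ is correct and is indeed the starting point of that argument, but without the root-extraction step the uniformity you need is simply not available.
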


In particular, the constant $C$ is independent of $J$ and $\omega$
(and $J$ and $\omega$ need not be related in any way).
Theorem \ref{thm:main-ac-str} implies a particular case of a recent
result of Meng and Zhang \cite{MZ} stating that automorphism
groups of (not necessarily smooth)
projective varieties over any algebraically closed field of
characteristic zero are Jordan.

\begin{remark}
\label{rmk:ac-structures}
Theorem \ref{thm:main-sympl} follows from Theorem
\ref{thm:main-ac-str} because for any symplectic manifold $(X,\omega)$ and any
symplectic action of a compact Lie group $K$ on $X$ there exists a $K$-invariant almost complex structure on $X$ (see e.g. \cite[Lemma 5.49]{McDS} and the remark before it).
\end{remark}

Although we do not know at present whether the symplectomorphism group of
every compact symplectic manifold is Jordan, we can prove the following weaker
statement.

\begin{theorem}
\label{thm:2-step-nilpotent}
Let $(X,\omega)$ be a compact symplectic manifold. There exists a constant $C$ depending only
on $H^*(X)$
with the property that any finite subgroup $\Gamma\subset\Symp(X,\omega)$ has a
subgroup $N\subseteq \Gamma$ satisfying $[\Gamma:N]\leq C$
and $N$ is either abelian or $2$-step nilpotent.
\end{theorem}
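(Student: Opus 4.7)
The plan is to pass to the identity component, combine Theorem~\ref{thm:main-ham} with the flux exact sequence, and then argue group-theoretically that the resulting extension is, after a bounded-index refinement, a central extension of an abelian group by an abelian group, hence $2$-step nilpotent (or abelian).

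The first move is to replace $\Gamma$ by $\Gamma_0:=\Gamma\cap\Symp_0(X,\omega)$. The action of $\Gamma$ on the integral cohomology $H^*(X;\ZZ)$ factors through $\Gamma/\Gamma_0$ and lands in a finite subgroup of the automorphism group of a lattice preserving the cup product, hence has order bounded in terms of $H^*(X)$ by a Minkowski-type argument. The flux homomorphism then produces
\begin{equation*}
1\longrightarrow H\longrightarrow \Gamma_0\stackrel{\Phi}{\longrightarrow} P\longrightarrow 1,
\end{equation*}
with $H:=\Gamma_0\cap\Ham(X,\omega)$ and $P=\Phi(\Gamma_0)$ a finite subgroup of $H^1(X;\RR)/\Gamma_{\mathrm{flux}}$. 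Lifting $P$ to $H^1(X;\RR)$ one sees that $P$ injects into a quotient of the form $\Gamma_{\mathrm{flux}}/|P|\Gamma_{\mathrm{flux}}$, hence $P$ is abelian and generated by a number of elements bounded in terms of $H^*(X)$. Theorem~\ref{thm:main-ham} applied to $H$ produces an abelian subgroup of bounded index, generated by at most $n$ elements; intersecting it with all subgroups of $H$ of the same index bound yields a characteristic abelian subgroup $A\subseteq H$, normal in $\Gamma_0$, of bounded index, and generated by at most $n$ elements.

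The heart of the proof is to control the conjugation representation $\rho:\Gamma_0\to\Aut(A)$. The restriction $\rho|_H$ factors through $H/A$, of bounded order, so its image is automatically bounded; the real content is to bound the image of lifts of classes in $P$. No purely group-theoretic bound is available, because $\Aut(A)$ can be arbitrarily large for an $n$-generated finite abelian group $A$. I expect this step to combine the topological rigidity of Hamiltonian loops with the Euler-class fixed-point theorem recalled in the abstract and the classification of finite simple groups, in the spirit of the proofs of Theorems~\ref{thm:main-ham} and \ref{thm:main-ac-str}. This is the main obstacle.

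Granting the bound on $|\mathrm{im}\,\rho|$, set $K:=\ker\rho$; then $[\Gamma_0:K]$ is bounded and $A\subseteq Z(K)$. The quotient $K/A$ fits in
\begin{equation*}
1\longrightarrow (H\cap K)/A\longrightarrow K/A\longrightarrow P'\longrightarrow 1,
\end{equation*}
with $(H\cap K)/A$ of bounded order and $P'\subseteq P$ abelian. The action of $P'$ on $(H\cap K)/A$ factors through $\Aut((H\cap K)/A)$, which has bounded order, so passing to its kernel gives a bounded-index subgroup $K_1\subseteq K$ in which $(H\cap K_1)/A$ is central. Then $K_1/A$ is $2$-step nilpotent with commutator subgroup contained in $(H\cap K_1)/A$ of bounded order, and is generated by a bounded number of elements. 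The non-degenerate alternating commutator pairing $G/Z(G)\times G/Z(G)\to [G,G]$ for a $2$-step nilpotent $G$ yields $|G/Z(G)|\leq |[G,G]|^{d(G)}$; applied to $G=K_1/A$ this shows that the preimage $N\subseteq K_1$ of $Z(K_1/A)$ has index in $\Gamma$ bounded in terms of $H^*(X)$. By construction $A\subseteq Z(N)$ and $N/A$ is abelian, so $[N,N]\subseteq A\subseteq Z(N)$ and $N$ is abelian or $2$-step nilpotent, as required.
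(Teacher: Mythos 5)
Your proposal has two genuine gaps, one of which you flag yourself and which is exactly where the content of the theorem lies. First, the opening reduction is unjustified: the index $[\Gamma:\Gamma\cap\Symp_0(X,\omega)]$ is not controlled by the action on $H^*(X;\ZZ)$, because the representation of $\Gamma/(\Gamma\cap\Symp_0(X,\omega))$ on cohomology need not be faithful --- a symplectomorphism can act trivially on $H^*(X)$ without being isotopic to the identity. (Your bound on the number of generators of $P$ also silently requires the flux group to be a discrete, or at least finitely generated, subgroup of $H^1(X;\RR)$, a nontrivial input the paper never uses.) The paper sidesteps both problems: it takes $\Gamma_0=\Ker(\Gamma\to\Aut(H^1(X)))$, whose index is bounded via Minkowski's lemma, and works with $G=[\Gamma_0,\Gamma_0]$, for which Theorem \ref{thm:lift-commutator-action-line-bundle} --- not the flux exact sequence --- supplies the line-bundle lifting needed to invoke Theorem \ref{thm:main-fixed-point} and hence Theorem \ref{thm:test-Jordan}.

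Second, and more seriously, the step you call ``the main obstacle'' --- bounding the image of the conjugation representation $\rho:\Gamma_0\to\Aut(A)$ --- is the heart of the proof, and your proposal does not supply it. The paper's mechanism is geometric and never bounds a subgroup of $\Aut(A)$ abstractly: the characteristic abelian subgroup $A_1$ is first refined prime by prime so that each $p$-part $A_{1,p}$ satisfies $X^{A_{1,p}}\neq\emptyset$ (your $A$, produced only by the Jordan property of Theorem \ref{thm:main-ham}, carries no such fixed-point information). The normal bundle of $X^{A_{1,p}}\subset X$ then splits into isotypic pieces indexed by a set $\Xi_p\subseteq\wh{A_{1,p}}$ of at most $2n\sum_j b_j(X;\FF_p)$ characters; conjugation permutes $\Xi_p$, so a subgroup of index at most $|\Xi_p|!$ fixes every character, and since $A_{1,p}$ acts by scalars on each piece, faithfulness of the isotropy representation (as in Corollary \ref{cor:Jordan-classic}) forces that subgroup to centralize $A_{1,p}$. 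This produces the bounded-index subgroup centralizing $A_1$, after which an endgame essentially like yours (pass to a subgroup $N$ with $A_1\subseteq Z(N)$ and $N/A_1$ abelian, so $[N,N]\subseteq Z(N)$) finishes the proof. Without this fixed-point input your plan cannot be completed, since, as you correctly observe, $\Aut(A)$ admits no a priori bound for an $n$-generated finite abelian group $A$.
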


Recall that a group $N$ is $2$-step nilpotent (equivalently, $N$ has nilpotency class $2$)
if $N$ is not abelian and every
three elements $a,b,c\in N$ satisfy $[[a,b],c]=1$.

\subsection{Fixed points of finite symplectic $p$-group actions}
To prove Theorems \ref{thm:main-ham} and \ref{thm:main-ac-str} we will use the general criterion of Jordanness given in \cite{MT}. This will allow us to restrict attention to fixed point properties of finite $p$-group actions. The theorems will then be a consequence of the following results.

\begin{theorem}
\label{thm:fixed-pt-hamiltonian} Let $(X,\omega)$ be a compact
symplectic manifold. There exists a constant $C$, depending only on $H^*(X)$, such that
for any prime $p$ and any finite $p$-subgroup $G\subset\Ham(X,\omega)$ there is a subgroup
$G_0\subseteq G$ satisfying $[G:G_0]\leq C$ and
$X^{G_0}\neq\emptyset$.
\end{theorem}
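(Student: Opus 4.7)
My plan is to deduce Theorem~\ref{thm:fixed-pt-hamiltonian} from the Euler class fixed-point theorem stated in the abstract, applied to an auxiliary $G$-equivariant complex vector bundle over $X$ built from the Hamiltonian structure. The naive candidate $E=TX$, equipped with a $G$-invariant $\omega$-compatible almost complex structure $J$ obtained by averaging over $G$, satisfies $\langle e(TX),[X]\rangle = \chi(X)$, which may vanish---for instance on symplectic tori. The role of the Hamiltonian hypothesis is precisely to replace $TX$ by a bundle with guaranteed nonzero top Chern integral, via prequantization.

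Concretely, after a $G$-equivariant deformation of $\omega$ to a $G$-invariant symplectic form $\omega'$ with $[\omega']\in H^2(X;\ZZ)^G$ (using density of rational classes in $H^2(X;\RR)^G$, openness of symplecticness, and the fact that for a finite group action the flux map takes torsion values and so vanishes under small enough deformations), pick a complex line bundle $L\to X$ with $c_1(L)=[\omega']$. The Hamiltonian structure lifts each $g\in G$ to $L$ via prequantization: a generating Hamiltonian $H_t$ for $g$ lifts $g$ using the phase $\exp\!\bigl(2\pi i\int_0^1 H_t\,dt\bigr)$. The composition ambiguity gives a cocycle in $H^2(G;\U(1))$; passing to the central extension $\wt G$ of $G$ by its image---a finite cyclic $p$-group, since $G$ is a $p$-group and $H^2(G;\U(1))$ has $p$-power exponent---produces an honest $\wt G$-action on $L$ lifting the $G$-action on $X$, with $\wt G$ again a finite $p$-group. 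Set $E:=L^{\oplus n}$, which is $\wt G$-equivariant of complex rank $n$ and real rank $2n=\dim X$, and compute
\[
\langle e(E),[X]\rangle \;=\; \langle c_1(L)^n,[X]\rangle \;=\; \int_X(\omega')^n \;\neq\; 0.
\]
Apply the Euler class theorem to $\wt G$ acting on $E$ over $(X,J)$: this yields $\wt G_0\subseteq\wt G$ with $[\wt G:\wt G_0]\leq C''$ and $X^{\wt G_0}\neq\emptyset$, where $C''$ depends only on the topological type of $(E,X)$, hence on $H^*(X)$. The image $G_0\subseteq G$ of $\wt G_0$ under the projection $\wt G\to G$ then satisfies $[G:G_0]\leq [\wt G:\wt G_0]\leq C''$ and $X^{G_0}\supseteq X^{\wt G_0}\neq\emptyset$.

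The main obstacle I expect is the prequantum lift: the canonical lift of a single Hamiltonian diffeomorphism to $L$ depends on a choice of Hamiltonian isotopy from $\id$ to $g$, and forcing the resulting family of lifts to satisfy the group law on $G$ requires controlling a cohomological obstruction uniformly in $G$ and $\omega$. This is exactly where the topological rigidity of Hamiltonian loops, flagged in the abstract as a main ingredient, should enter: it bounds the ambiguity of the lift by data coming only from the topology of $X$, so that the central extension $\wt G/G$ and the final constant $C''$ depend on $H^*(X)$ alone. A secondary point is that the integer scaling implicit in the perturbation of $\omega$ does not affect $C''$, since the Euler-class-theorem constant for $L^{\oplus n}$ depends only on the rank of $E$ and the cohomology of $X$, not on the specific value of $c_1(L)$.
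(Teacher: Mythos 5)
Your overall architecture is the same as the paper's: produce a complex line bundle $L$ with $\langle[X],c_1(L)^n\rangle\neq 0$, lift the $G$-action to $L$ after passing to a central extension by a finite cyclic $p$-group, set $E=L^{\oplus n}$, apply Theorem \ref{thm:main-fixed-point}, and push the fixed-point subgroup back down to $G$. But there are two genuine gaps in how you set up $L$. First, your route requires a $G$-invariant symplectic form $\omega'$ with integral class such that $G\subset\Ham(X,\omega')$, and your justification (``the flux map takes torsion values and so vanishes under small enough deformations'') does not work: averaging a nearby rational class does give a $G$-invariant integral symplectic form $\omega'$, and each $g\in G$ preserves $\omega'$, but the $\omega$-Hamiltonian isotopy generating $g$ is in general not even $\omega'$-symplectic, and the best one can say is that the $\omega'$-flux of $g$ is a torsion element of $H^1(X;\RR)/\Gamma_{\omega'}$ --- which need not vanish, and has no reason to vanish ``for small deformations'' (the flux group $\Gamma_{\omega'}$ is not continuous in $\omega'$). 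The paper sidesteps this entirely: it never makes $[\omega]$ integral and never uses a prequantum connection. It takes $L$ to be \emph{any} line bundle with $c_1(L)=v$, $v^n\neq 0$, lifts each $g$ by parallel transport of an arbitrary unitary connection along an $\omega$-Hamiltonian isotopy, and shows the resulting $2$-cocycle valued in $\cC^\infty(X,S^1)$ consists of null-homotopic maps by the Lalonde--McDuff--Polterovich rigidity theorem (the degree of $f_\beta(g_1,g_2)\circ\gamma$ equals $\langle c_1(L),\Psi_*[S^1\times S^1]\rangle=0$); elementary group cohomology ($H^*(G,V)=0$ for $\RR$-vector spaces $V$, plus surjectivity of $H^2(G,\mu_d)\to H^2(G,S^1)$) then reduces the cocycle to constant $d$-th roots of unity. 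So the rigidity of Hamiltonian loops is used to kill the \emph{nonconstant} part of the cocycle for an arbitrary $L$, not to control a prequantization of $\omega$ itself.

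Second, your closing remark that the constant in the Euler-class theorem ``depends only on the rank of $E$ and the cohomology of $X$, not on the specific value of $c_1(L)$'' is false: Theorem \ref{thm:main-fixed-point} produces a constant depending on $\langle[X],\e(E)\rangle$ as well as $H^*(X)$ (the primes dividing $\langle[X],\e(E)\rangle$ and the exponents $k$ enter explicitly into $C_0'$). If you take $c_1(L)=N[\omega']$ with $N$ and $\omega'$ depending on $\omega$, the resulting constant depends on $\omega$, contradicting the claim of the theorem. The repair, implicit in the paper, is to choose the integral class $v$ as a function of $H^*(X)$ alone --- e.g.\ any $v\in H^2(X;\ZZ)$ with $v^n\neq 0$ minimizing $|\langle[X],v^n\rangle|$; such $v$ exists because $[\omega]^n\neq 0$ forces some integral class to have nonzero top power, and then both the existence of $v$ and the value $\langle[X],v^n\rangle$ are determined by the ring $H^*(X)$. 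With these two corrections (decouple $c_1(L)$ from $\omega$, and lift via parallel transport plus the LMP theorem rather than prequantization of a deformed form) your argument becomes the paper's.
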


In particular, if $p>C$ then any finite $p$-subgroup
$G\subset\Ham(X,\omega)$ satisfies $X^G\neq\emptyset$, and
since the differential of the action at any $x\in X^G$ embeds
$G$ inside $\GL(T_xM)$ (see e.g. the proof of Corollary
\ref{cor:Jordan-classic}) we may combine this with Jordan's
theorem (see Corollary \ref{cor:Jordan-classic}) to obtain the
following result:

\begin{corollary}
Let $(X,\omega)$ be a $2n$-dimensional compact symplectic manifold.
There exists a constant $C'$, depending only on $H^*(X)$, such
that for any prime $p>C'$ and any finite $p$-group
$G\subset\Ham(X,\omega)$ we have: $X^G\neq\emptyset$, $G$ is
abelian, and $G$ can be generated by $n$ (or fewer) elements.
\end{corollary}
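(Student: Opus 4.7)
The plan is to chain Theorem~\ref{thm:fixed-pt-hamiltonian} with Jordan's classical theorem applied to the isotropy representation at a fixed point. Let $C$ be the constant produced by Theorem~\ref{thm:fixed-pt-hamiltonian}, let $J(n)$ denote a Jordan constant for $\GL(n,\CC)$ (so that every finite subgroup of $\GL(n,\CC)$ contains a normal abelian subgroup of index at most $J(n)$), and set $C'=\max(C,J(n))$. This depends only on $H^*(X)$, since $n$ is recovered from cohomology as half of the top nonvanishing degree.

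Fix a prime $p>C'$ and a finite $p$-subgroup $G\subset\Ham(X,\omega)$. Theorem~\ref{thm:fixed-pt-hamiltonian} yields $G_0\subseteq G$ with $[G:G_0]\leq C<p$ and $X^{G_0}\neq\emptyset$; because $|G_0|$ divides the $p$-power $|G|$, the index $[G:G_0]$ is itself a power of $p$ strictly less than $p$, forcing $[G:G_0]=1$. Hence $G=G_0$ and $X^G\neq\emptyset$. Next I would pick $x\in X^G$ and consider the derivative representation $\rho\colon G\to\GL(T_xX)$, faithful by the standard linearization argument recalled in the proof of Corollary~\ref{cor:Jordan-classic}. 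Since $G$ preserves $\omega$, the image lies in the symplectic group of $(T_xX,\omega_x)$; averaging any $\omega_x$-compatible almost complex structure over $\rho(G)$ produces a $G$-invariant one, refining the embedding to $G\hookrightarrow\U(n)\subset\GL(n,\CC)$.

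Jordan's theorem then provides a normal abelian $A\triangleleft G$ with $[G:A]\leq J(n)<p$; since $G/A$ is a $p$-group of order smaller than $p$, we must have $A=G$, so $G$ is abelian. Finally, a finite abelian subgroup of $\U(n)$ is simultaneously diagonalizable, hence conjugate into the maximal torus $(S^1)^n$, and any finite subgroup of $(S^1)^n$ is a direct sum of at most $n$ cyclic groups, so $G$ is generated by $n$ or fewer elements. There is no serious obstacle once Theorem~\ref{thm:fixed-pt-hamiltonian} is in hand; the only delicate step is the promotion of the isotropy embedding from $\operatorname{Sp}(2n,\RR)$ to $\U(n)$ via the averaging trick, which is what lets $n$ rather than $2n$ appear both in the Jordan constant and in the generator bound.
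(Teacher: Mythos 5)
Your proof is correct and follows essentially the same route as the paper: Theorem \ref{thm:fixed-pt-hamiltonian} plus the observation that $[G:G_0]$ is a $p$-power less than $p$ gives $X^G\neq\emptyset$, and linearization at a fixed point combined with Jordan's theorem (as in Corollary \ref{cor:Jordan-classic}) forces $G$ to be abelian and generated by $n$ elements. The only cosmetic difference is that you embed $G$ into $\U(n)$ before invoking Jordan, whereas the paper applies $\Jor_{2n}$ for $\GL(2n,\RR)$ and only afterwards uses the invariant complex structure for the generator count; do note that to produce the invariant compatible complex structure one averages a compatible metric and applies the polar-decomposition construction (cf.\ \cite[Lemma 5.49]{McDS}) rather than averaging $J$ itself, since an average of complex structures need not square to $-\Id$.
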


This has the interesting consequence that for many compact
symplectic manifolds $(X,\omega)$ such that $\pi_2(X)\neq 0$
(hence, not satisfying the hypothesis of the theorem of Polterovich which was mentioned earlier) the difference
$$\sS(X,\omega)\setminus \hH(X,\omega)$$
contains infinitely many elements. For example, if
$X=T^{2n}\times S^2$ is endowed with a product symplectic form
$\omega$ and the restriction of $\omega$ to $T^{2n}$ is
translation invariant then for any prime $p$ there is a
subgroup of $\Symp_0(X,\omega)$ isomorphic to $(\ZZ/p)^{2n+1}$,
whereas if $p$ is big enough then any subgroup of
$\Ham(X,\omega)$ isomorphic to $(\ZZ/p)^r$ satisfies $r\leq
n+1$.

\begin{theorem}
\label{thm:fixed-pt-symplectic} Let $(X,J)$ be a
$2n$-dimensional almost complex, compact and connected smooth
manifold satisfying $b_1(X)=0$, and assume that there exists
$\omega\in H^2(X;\RR)$ such that $\omega^n\neq 0$. There exists
a constant $C$, depending only on $H^*(X)$, with the following
property. Let $p$ be any prime, and let $G\subset\Diff(X,J)$ be
a finite $p$-subgroup. Then there is a subgroup $G_0\subseteq
G$ satisfying $[G:G_0]\leq C$ and $X^{G_0}\neq\emptyset$.
\end{theorem}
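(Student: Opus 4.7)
The strategy I would pursue is to reduce Theorem \ref{thm:fixed-pt-symplectic} to the abstract Euler-class fixed-point theorem stated at the end of the abstract (on complex vector bundles $E$ of real rank $\dim M$ with $\langle e(E),[M]\rangle\neq 0$), applied to a rank-$n$ complex bundle over $X$ that is equivariant for a finite-index subgroup of $G$. The tautological candidate $E=(TX,J)$ is automatically $G$-equivariant and succeeds when $\chi(X)=\langle c_n(TX),[X]\rangle\neq 0$; however, $\chi(X)$ may vanish under the present hypotheses. The plan is therefore to twist: apply the abstract theorem to $E=TX\otimes L^{\otimes a}$, where $L$ is a $G$-equivariant complex line bundle whose first Chern class is built from $\omega$.

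The first step is to reduce to the case of trivial action on cohomology. The action of $G$ on $H^*(X;\ZZ)$ factors through a quotient of order bounded in terms of $H^*(X)$: Minkowski's bound on finite subgroups of $\GL(r,\ZZ)$ controls the image on the free part, and the action on each torsion summand factors through a finite automorphism group; so passing to the kernel $G_1\subseteq G$ costs a bounded index. Next, rationally approximating $\omega$ in $H^2(X;\RR)$ and clearing denominators yields an integral class $\eta\in H^2(X;\ZZ)$ with $\eta^n\neq 0$, which is $G_1$-invariant. Because $b_1(X)=0$ forces $H^1(X;\ZZ)=0$ via universal coefficients, the Serre spectral sequence of the Borel fibration $X\to(X\times EG_1)/G_1\to BG_1$ has $E_2^{2,1}=H^2(G_1;H^1(X;\ZZ))=0$, so the only obstruction to lifting $\eta$ to the equivariant group $H^2_{G_1}(X;\ZZ)$ is the $d_3$-differential, valued in the torsion group $H^3(G_1;\ZZ)$. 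Since $G_1$ is finite this spectral sequence degenerates rationally, so $\eta$ admits a rational equivariant lift, and after multiplying by a positive integer $N$ to clear denominators one obtains a $G_1$-equivariant complex line bundle $L$ with $c_1(L)=N\eta$ and $\langle c_1(L)^n,[X]\rangle\neq 0$.

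Finally, set $E=TX\otimes L^{\otimes a}$, a $G_1$-equivariant complex bundle of complex rank $n$. Its Chern roots are $\alpha_i+a\,c_1(L)$, where the $\alpha_i$ are the Chern roots of $TX$, so
$$c_n(E)=\sum_{k=0}^{n}c_{n-k}(TX)\,\bigl(a\,c_1(L)\bigr)^{k};$$
for $a$ larger than a threshold determined by $H^*(X)$ the leading term $a^nc_1(L)^n$ dominates and $\langle c_n(E),[X]\rangle\neq 0$. The abstract Euler-class theorem applied to $(X,E,G_1)$ then yields $G_0\subseteq G_1$ with $X^{G_0}\neq\emptyset$ and $[G_1:G_0]\leq C''$ depending only on $(X,E)$—hence only on $H^*(X)$—and combining with the first reduction gives the required bound on $[G:G_0]$. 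The point I expect to be hardest is keeping the integer $N$ in Step 2 bounded uniformly in $H^*(X)$: a priori $d_3(\eta)\in H^3(G_1;\ZZ)$ has order bounded only by $|G_1|$, which is unbounded over the class of all $p$-subgroups, so a naive reading lets $N$, and therefore $C''$, depend on $G$. Resolving this uniformity issue—either by showing that the constant in the abstract theorem depends on $E$ only through rational Chern data, or by constructing $L$ via a finer equivariant argument that sidesteps the $d_3$-obstruction—is the main technical difficulty of the proof.
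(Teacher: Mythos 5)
Your overall architecture (build a line bundle $L$ from an integral class $v$ with $v^n\neq 0$, form a rank-$n$ bundle with nonzero Euler number, and invoke the abstract fixed-point theorem) is the right one, and your preliminary reductions (bounded-index passage to trivial action on cohomology, rational approximation of $\omega$) are sound. But the step you yourself flag as the "main technical difficulty" is a genuine gap, not a loose end: the obstruction $d_3(\eta)\in H^3(G_1;\ZZ)$ has order bounded only by $\exp H^3(G_1;\ZZ)$, which is unbounded over the class of all finite $p$-groups (already for $G_1=(\ZZ/p^k)^2$ one has $\ZZ/p^k\subseteq H^3(G_1;\ZZ)$), so clearing denominators forces $N$, hence $\la c_n(E),[X]\ra$, hence the constant from the abstract theorem, to depend on $G$. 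Neither of your two suggested escape routes is carried out, and the first one (uniformity of the abstract constant in "rational Chern data") is not available: the constant in Theorem \ref{thm:main-fixed-point} genuinely depends on the integer $\la[X],\e(E)\ra$ through the set of primes dividing it and the relevant $p$-adic valuation.

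The paper's resolution is your second escape route, made precise: do not try to make $L$ itself $G$-equivariant. Instead (Theorem \ref{thm:lift-action-line-bundle}) one lifts the action to $L$ only after replacing $G$ by a central extension $1\to\mu_{|G|}\to G'\to G\to 1$; when $b_1(X)=0$ this extension and its action on $L$ always exist with no obstruction (the class $d_3(c_1(L))\in H^3(G;\ZZ)\simeq H^2(G;S^1)$ is exactly the extension class, and it is killed by $|G|$, so it dies on $G'$). Since $G'$ is still a finite $p$-group and Theorem \ref{thm:main-fixed-point} applies to any finite $p$-subgroup of $\Aut(E,X,J)$, one applies it to (the image of) the fiber product of $n$ copies of $G'$ over $G$ acting on $E=L^{\oplus n}$, and a bounded-index subgroup of the extension with a fixed point projects to a bounded-index subgroup of $G$ with a fixed point. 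Two further simplifications relative to your plan: the paper takes $E=L^{\oplus n}$, so $\e(E)=c_1(L)^n=v^n\neq 0$ immediately and no twisting of $TX$ or large-$a$ threshold is needed; and $v$ is chosen once and for all from $H^*(X)$, so the constant depends only on $H^*(X)$.
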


Theorem \ref{thm:fixed-pt-symplectic} solves a weaker version of a  classical problem in
transformation groups, namely that of proving existence of
fixed points of finite $p$-group actions on projective
varieties. There exist many partial results on this question,
assuming different restrictions on the group, the manifold, or
the action, see e.g. \cite[Theorem (1.5)]{Br1}, \cite[Theorem
(4.10)]{Br2}, or more recently \cite[Theorem 4.2]{PS2} combined
with \cite{Bi}.

Combining the previous theorem with Jordan's theorem (see
Corollary \ref{cor:Jordan-classic}) we obtain.

\begin{corollary}
Let $(X,J)$ be a $2n$-dimensional almost complex, compact and
connected smooth manifold satisfying $b_1(X)=0$, and assume
that there exists $\omega\in H^2(X;\RR)$ such that
$\omega^n\neq 0$. There exists a constant $C$, depending only
on $H^*(X)$,  such that for any prime
    $p>C$, any finite $p$-subgroup $G\subset\Diff(X,J)$ satisfies $X^G\neq\emptyset$, $G$ is
    abelian, and $G$ can be generated by $n$ (or fewer) elements.
\end{corollary}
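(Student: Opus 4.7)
The plan is to obtain the corollary as a direct combination of Theorem \ref{thm:fixed-pt-symplectic} with Jordan's classical theorem (Corollary \ref{cor:Jordan-classic}), using linearization at a fixed point as the bridge. Observe first that the dimension $2n$ of $X$ is detected by $H^*(X)$: the hypothesis $\omega^n\neq 0$ forces $H^{2n}(X;\RR)\neq 0$, so $n$ depends only on $H^*(X)$, and any Jordan-type constant for $\GL(n,\CC)$ may thus be regarded as a function of $H^*(X)$ alone.

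First I would apply Theorem \ref{thm:fixed-pt-symplectic} to extract a constant $C_1$, depending only on $H^*(X)$, such that every finite $p$-subgroup $G\subset\Diff(X,J)$ contains a subgroup $G_0$ with $[G:G_0]\leq C_1$ and $X^{G_0}\neq\emptyset$. The index $[G:G_0]$ divides the $p$-power $|G|$, so as soon as $p>C_1$ we must have $[G:G_0]=1$; hence $G_0=G$ and $X^G\neq\emptyset$. Pick $x\in X^G$. As recalled in the proof of Corollary \ref{cor:Jordan-classic}, the differential at $x$ yields a faithful linear representation $G\hookrightarrow\GL(T_xX)$ (faithfulness comes from equivariant linearization: any element acting trivially on $T_xX$ is the identity on a neighborhood of $x$, hence on all of the connected manifold $X$). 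Since $G$ preserves $J$, this representation factors through $\GL(T_xX,J_x)\cong\GL(n,\CC)$.

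Now I would invoke Jordan's theorem: $\GL(n,\CC)$ is $C_2$-Jordan for some $C_2=C_2(n)$, so $G$ admits an abelian subgroup $A$ with $[G:A]\leq C_2$. Setting $C:=\max(C_1,C_2)$, the same divisibility argument applied to the $p$-power $[G:A]\leq C_2<p$ forces $A=G$, so $G$ itself is abelian. Finally, a finite abelian subgroup of $\GL(n,\CC)$ is simultaneously diagonalizable and thus embeds in $(\CC^*)^n$; its image sits inside the $N$-torsion $(\ZZ/N)^n$ for some $N$, so $G$ is generated by at most $n$ elements, as desired.

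I do not anticipate any genuine obstacle: Theorem \ref{thm:fixed-pt-symplectic} does all the heavy lifting, and the remainder is a standard package of smooth linearization at a fixed point, the classical Jordan theorem, and diagonalization of finite abelian complex linear groups. The only point requiring a small amount of care is the bookkeeping that ensures the final constant $C$ depends only on $H^*(X)$; this is handled by absorbing $C_1$ (from Theorem \ref{thm:fixed-pt-symplectic}) and $C_2=C_2(n)$ (from Jordan) into a single maximum, using that $n$ itself is determined by $H^*(X)$.
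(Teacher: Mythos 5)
Your proposal is correct and follows essentially the same route as the paper, which obtains this corollary by combining Theorem \ref{thm:fixed-pt-symplectic} with Corollary \ref{cor:Jordan-classic}: since $[G:G_0]$ and $[G:A]$ are powers of $p$ bounded by a constant smaller than $p$, both indices collapse to $1$, and the linearization at a fixed point (faithful by the invariant-metric/equivariant-exponential argument recalled in the proof of Corollary \ref{cor:Jordan-classic}) lands in $\U(n)$, giving the bound of $n$ generators.
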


\subsection{Actions of finite $p$-groups on vector bundles}
The main technical tool developed in this paper, from which we
will deduce Theorems \ref{thm:fixed-pt-hamiltonian} and
\ref{thm:fixed-pt-symplectic}, is a fixed point theorem for
actions of finite $p$-groups on complex vector bundles.

Before explaining the theorem, let us recall some standard
terminology to avoid potential confusions. Let $G$ be a group
acting smoothly on (the total space of) a vector bundle $E\to
X$. We say that the action is by vector bundle automorphisms if
it sends fibers to fibers and the action is compatible with the
vector space structure on the fibers. In particular, there is
an induced action of $G$ on $X$ such that for any $g\in G$,
$x\in X$ and $e\in E_x$ we have $g\cdot e\in E_{g\cdot x}$. We
say that the action of $G$ on $E$ lifts the action on $X$.

If $E\to X$ is a complex vector bundle, with $X$ a smooth
manifold, and $J$ is an almost complex structure on $X$, we
denote by
$$\Aut(E,X,J)$$
the group of all vector bundle automorphisms of $E\to X$
lifting elements of $\Diff(X,J)$.

\begin{theorem}
\label{thm:main-fixed-point} Let $(X,J)$ be an almost complex,
compact and connected smooth manifold and let $E\to X$ be a
complex vector bundle satisfying $\rk_{\RR}(E)=\dim_{\RR} X$.
Suppose that $\la [X],\e(E)\ra\neq 0$, where $\e(E)$ is the
Euler class of $E$ and $X$ is oriented by $J$. Then there
exists a constant $C$, depending only on $\la[X],\e(E)\ra$ and
$H^*(X)$, with the following property.
Let $p$ be any prime, and let $G$ be a finite $p$-subgroup of
$\Aut(E,X,J)$. Then there exists an abelian subgroup
$A\subseteq G$ satisfying $[G:A]\leq C$ and $X^A\neq\emptyset$.
\end{theorem}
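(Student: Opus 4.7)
Set $N := \la [X], \e(E) \ra$ and $n := \dim_{\CC} E = \tfrac12 \dim_{\RR} X$. The overarching strategy is to produce a subgroup $H \subseteq G$ of index bounded in terms of $N$ and $H^*(X)$ with $X^H \neq \emptyset$, and then invoke Jordan's classical theorem on $\GL(n,\CC)$: the derivative action of $H$ at a fixed point $x_0$ embeds $H$ into the subgroup of $\GL(T_{x_0}X)$ commuting with $J_{x_0}$, hence into $\GL(n,\CC)$, which yields an abelian subgroup $A \subseteq H$ of index $\leq J(n)$ with $x_0 \in X^A$.

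\textbf{Step 1 (Invariant section and Euler counting).} By equivariant transversality for the finite group $G$, choose a $G$-invariant smooth section $s\colon X \to E$ with isolated transverse zeros, and set $Z := s^{-1}(0)$. Since $G$ preserves the orientation of $X$ (via $J$) and acts complex-linearly on $E$, the local index $\epsilon(x) \in \{\pm 1\}$ is constant on each $G$-orbit of $Z$. Partitioning $Z$ into orbits of sizes $|G|/|G_x|$, which are powers of $p$, non-fixed orbits contribute multiples of $p$, so
\[
N \;=\; \sum_{x\in Z}\epsilon(x) \;\equiv\; \sum_{x \in Z\cap X^G}\epsilon(x) \pmod p.
\]
Hence if $p \nmid N$, then $X^G \neq \emptyset$ and the conclusion follows with $H=G$.

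\textbf{Step 2 (Bad primes $p \mid N$, by induction on $|G|$).} Only finitely many primes divide $N$, so for each such $p$ we induct on $|G|$ allowing the bound to depend on $N$. Pick a central subgroup $Z_0 \subseteq Z(G)$ of order $p$ (which exists because $G$ is a nontrivial $p$-group). If $X^{Z_0} = \emptyset$, then $Z_0$ acts freely on $X$; the quotient $X' := X/Z_0$ is a closed almost complex manifold, and $E$, after a twist removing the (locally constant) $Z_0$-character on fibres, descends to a complex vector bundle $E' \to X'$ of matching rank with $G/Z_0$-action. The degree-$p$ covering $X \to X'$ gives $\la [X'], \e(E')\ra = N/p$, and the induction hypothesis applies: the $p$-adic valuation of the Euler number strictly decreases, so this branch terminates in at most $v_p(N)$ iterations. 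If $X^{Z_0} \neq \emptyset$, then $X^{Z_0}$ is a $G$-invariant closed almost complex submanifold, and the $Z_0$-fixed subbundle $E^{Z_0} \subseteq E|_{X^{Z_0}}$ is a complex vector bundle of real rank equal to $\dim_{\RR} X^{Z_0}$ (because $ds$ at a fixed zero is a $Z_0$-equivariant isomorphism matching trivial isotypic parts on both sides). A localization identity for the $\ZZ/p$-action on the equivariant Euler class produces a connected component $Y \subseteq X^{Z_0}$ with $\la [Y], \e(E^{Z_0}|_Y)\ra \neq 0$, and the inductive hypothesis applied to $(Y, E^{Z_0}|_Y, G/Z_0)$ yields a subgroup $\overline{H} \subseteq G/Z_0$ of bounded index with $Y^{\overline{H}} \neq \emptyset$; its preimage $H \subseteq G$ has the same index and satisfies $X^H \supseteq Y^{\overline{H}} \neq \emptyset$.

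\textbf{Main obstacle.} The delicate point is the localization identity in the nonempty-fixed-set branch of Step 2. The standard Atiyah--Bott--Segal formula for $\ZZ/p$-actions involves denominators of the form $1-\zeta^w$ with $\zeta$ a primitive $p$-th root of unity, which are not directly usable when $p \mid N$; one needs instead to integrate the equivariant Euler class along the fibre of $\pi \colon EG \times_G X \to BG$ in $\ZZ_{(p)}$-Borel equivariant cohomology and extract an integral identity isolating a fixed component $Y$ whose Euler number is nonzero, together with a bound on the number of such components in terms of the Betti data of $X$ (via Smith theory for the $p$-group action). Combining the depth bounds of the two branches of the induction (each polynomial in $v_p(N)$ and in the total Betti number of $X$) then yields the uniform index bound $C$ of the statement.
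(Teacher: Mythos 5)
Your Step 1 (the case $p\nmid\la[X],\e(E)\ra$) is correct in spirit, though the mechanism is shaky: a $G$-invariant section restricted to a stratum $X^H$ necessarily takes values in the fixed subbundle $E^H$, whose rank over $X^H$ may be strictly smaller than $\dim X^H$, so equivariant transversality with isolated zeros is not always achievable; the clean argument runs through the genericity of the equivariant pushforward of $1$ (as in Lemma \ref{lemma:free-action-roots-unity} and the $k=1$ case of the paper). The genuine gap is the case $p\mid N$, which is where essentially all of the paper's work lies. Your key claim --- that localization for the $\ZZ/p$-action on $X$ produces a component $Y\subseteq X^{Z_0}$ with $\rk_{\RR}E^{Z_0}|_Y=\dim_{\RR}Y$ and $\la[Y],\e(E^{Z_0}|_Y)\ra\neq 0$ --- is both unproved and false as stated. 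Localization at a $\ZZ/p$-action only yields information modulo $p$, which is vacuous exactly when $p\mid N$; there need be no component of $X^{Z_0}$ on which the fixed subbundle has the right rank (your justification via $ds$ presupposes a fixed transverse zero on $Y$, which need not exist), let alone nonzero Euler number. In the paper, the class that survives localization is not the Euler class of a single bundle but a monomial in the Chern classes of the character-isotypic pieces of $E|_{X^{A}}$ \emph{and} of the normal bundle of $X^{A}\subset X$, whose pairing with a component is not divisible by $p^{k}$, where $p^{k}$ is the least power of $p$ not dividing $N$; this is why the induction must be set up for ``admissible tuples'' with several bundles. Making that work requires (i) localizing at subgroups $\ZZ/p^{k}$ rather than $\ZZ/p$, hence finding elements of order $p^{k}$ with $G$-invariant class (Theorem \ref{thm:element-h-p-k}), available only after reducing a general $p$-group to an abelian-by-cyclic one of controlled index (Theorem \ref{thm:cyclic-by-abelian} plus the Mann--Su rank bound), and (ii) inverting the $\ZZ/p^{k}$-equivariant Euler class of the normal bundle, which is possible only up to a power of $\tau_k$ via Quillen's nilpotence argument (Theorem \ref{thm:inverting-euler-class}). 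Your paragraph labelled ``Main obstacle'' correctly locates this difficulty but supplies no argument for it.

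Two secondary problems: in the free branch of your Step 2, a fixed point of $\ov{H}$ on $X'=X/Z_0$ does \emph{not} yield a fixed point of its preimage $H$ on $X$, since $Z_0\subseteq H$ acts freely; one must instead pass to the stabilizer of a lift, which has index $p$ in $H$, and one must also control how $H^*(X/Z_0)$ (and hence the inductive constant) depends on $H^*(X)$ through iterated free quotients. These are repairable. The collapse of the inductive invariant in the non-free branch is not repairable within the single-bundle, single-Euler-class framework you propose.
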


The proof of Theorem \ref{thm:main-fixed-point} proceeds in two
steps. First we assume that $G$ has an abelian normal subgroup
$B$ such that $G/B$ is cyclic, and then the result is extended
to arbitrary finite $p$-groups by combining that particular
case with Jordan's theorem. Of these two steps, the first one
is substantially more involved than the second. To briefly
explain this, let us consider the situation in which $G$ is
abelian, since  in this case the main difficulties are already
present.

If $p$ does not divide $\la[X],\e(E)\ra$ then the
proof is similar to that of the main
theorem in \cite{Br3}. We proceed by induction on $|G|$.
Let $H\subseteq G$ be a subgroup isomorphic to $\ZZ/p$.
Since $\la[X],\e(E)\ra$ is
not divisible by $p$,
$X^H$ is nonempty. If $H=G$ then we are done, otherwise we consider the
induced action of $G/H$ on $X^H$. To be able to apply induction
it is necessary to generalize the statement to include several vector bundles
instead of only one and to replace the
Euler class by a product of Chern classes of the vector bundles (and then the
numerical hypothesis is that such product can be chosen so that its pairing with $[X]$
is not divisible by $p$).
In the induction process we add the normal bundle $N$ of $X^H\hookrightarrow X$
to the collection of vector bundles, and all vector
bundles (the {\it old ones} and $N$, the {\it new one}) have to be decomposed and
twisted by characters of $H$ so that the action of $G/H$ on $X$ lifts to an action on each of
them. Here we need all involved vector bundles (including $N$)
to carry an invariant complex structure.
This is the only moment where we use our assumption that the action of $G$ preserves an
almost complex structure $J$ on $X$: the complex structure on $N$ is taken to
be the restriction of $J$.
Finally, to guarantee that the numerical hypothesis is inherited by the action of $G/H$ on
$X^H$ and the new collection of vector bundles we use the standard fact that
the equivariant Euler class of $N$ with $\ZZ/p$ coefficients is invertible up to
inverting a generator of $H^*(BG)$.

If $p$ divides $\la[X],\e(E)\ra$ things are more involved. Suppose that $p^k$ is the smallest power of $p$ that does not divide $\la[X],\e(E)\ra$ and that $k\geq 2$. Let $H\subseteq G$ be a subgroup isomorphic to $\ZZ/p^k$
(if such $H$ does not exist, then $|G|$ is smaller than a constant depending on $X$ and $k$,
so we are done). Now it is not necessarily true that $X^H$ is nonempty, but certainly
$X^{H'}\neq\emptyset$, where $H'=p^{k-1}H\simeq\ZZ/p$.
Let $N$ be the normal bundle of $X^{H'}\subset X$. Then $N$ is a
$G$-equivariant vector bundle, and the main  step in the proof of Theorem \ref{thm:main-fixed-point} is the proof that its equivariant Euler class
with $\ZZ/p^k$ coefficients is invertible up to inverting a generator of $H^*(BG)$, see Theorem \ref{thm:inverting-euler-class}. We use for that a well
known nilpotence argument in equivariant cohomology due to Quillen (see \cite[\S 3]{Q} --- we only
need the compact case, whose proof is elementary).
The remaining details are the same as in the case where $p$ does not divide $\la[X],\e(E)\ra$.

\begin{remark}
\label{rmk:p-odd-1} We mentioned above that the only place in
the proof of Theorem \ref{thm:main-fixed-point} where we need
that the action of $G$ preserves an almost complex structure on
$X$ is to guarantee the existence of a $G_1$-invariant complex
structure on the normal bundles of inclusions of the type
$X^{G_1}\subset X^{G_0}$, where $G_1$ is abelian, $G_0\subseteq
G_1$ and $G_1/G_0\simeq\ZZ/p$. If $p$ is odd such complex
structures always exist (see e.g. \cite[Theorem (38.3)]{CF}),
so if we only consider odd primes Theorem
\ref{thm:main-fixed-point} is true without assuming the
existence of an almost complex structure (see \cite{Br3} for
similar considerations).
\end{remark}

To deduce Theorems \ref{thm:2-step-nilpotent}, \ref{thm:fixed-pt-symplectic} and
\ref{thm:fixed-pt-hamiltonian} from Theorem
\ref{thm:main-fixed-point} we apply the following result.

\begin{theorem}
\label{thm:liftings} Let $X$ be compact smooth manifold,
and let $G\subset\Diff(X)$ be a
finite subgroup. Let $L\to X$ be a complex line bundle
satisfying $g^*L\simeq L$ for every $g\in G$. Suppose that one
of the following conditions holds true.
\begin{enumerate}
\item $b_1(X)=0$, or
\item there exists a symplectic form $\omega$ on $X$ such that
$G\subset\Ham(X,\omega)$, or
\item there exists a finite group $\Gamma\subset\Diff(X)$, whose
action on $H^1(X)$ is trivial, which satisfies
$G\subseteq[\Gamma,\Gamma]$ and $\gamma^*L\simeq L$ for every $\gamma\in\Gamma$.
\end{enumerate}
Then there exists a finite group $G'$ sitting in a short exact
sequence
$$1\to H\to G'\stackrel{\pi}{\longrightarrow} G\to 1,$$
where $H$ is finite cyclic and $|H|$ divides $|G|$, and a smooth action of $G'$ on $L$ by
bundle automorphisms lifting the action of $G$ on $X$. The latter means
that for any $\gamma\in G'$, any $x\in X$, and any
$\lambda\in L_x$ we have $\gamma\cdot\lambda\in
L_{\pi(\gamma)(x)}.$
\end{theorem}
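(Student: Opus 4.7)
Fix a Hermitian metric on $L$ (unique up to isomorphism). The group $\Aut(L)$ of unitary bundle automorphisms of $L$ covering diffeomorphisms of $X$ fits in a short exact sequence
$$1\to C^\infty(X,U(1))\to\Aut(L)\to\Diff(X,[L])\to 1,$$
where $\Diff(X,[L])$ is the subgroup of $\Diff(X)$ preserving the isomorphism class $[L]$ and the kernel is the gauge group. Since $g^*L\simeq L$ for all $g\in G$, we have $G\subseteq\Diff(X,[L])$, and pulling back yields an extension $1\to C^\infty(X,U(1))\to\wt G\to G\to 1$ of class $[c]\in H^2(G,C^\infty(X,U(1)))$. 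The theorem reduces to finding a cohomologous cocycle valued in a finite cyclic subgroup $H\subseteq U(1)\subset C^\infty(X,U(1))$ with $|H|$ dividing $|G|$: the subgroup of $\wt G$ generated by $H$ together with the corresponding modified lifts will then be the desired $G'$.

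\textbf{Cohomological reduction of the kernel.} The gauge group fits in a short exact sequence $1\to C^\infty(X,U(1))_0\to C^\infty(X,U(1))\to H^1(X,\ZZ)\to 0$ whose identity component is $C^\infty(X,U(1))_0\cong C^\infty(X,\RR)/\ZZ$. Since $C^\infty(X,\RR)$ and $C^\infty(X,\RR)/\RR$ are $\QQ$-vector spaces, their $G$-cohomology vanishes in positive degrees, and chasing the resulting exact sequences yields $H^n(G,C^\infty(X,U(1))_0)\cong H^n(G,U(1))$ for $n\geq 1$. By Schur's theorem, $H^2(G,U(1))$ is a finite group of exponent dividing $|G|$, so any class there is represented by a cocycle valued in $\mu_k\subseteq U(1)$ for some $k\mid|G|$. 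Consequently, the theorem reduces to showing that the image $[\bar c]\in H^2(G,H^1(X,\ZZ))$ of $[c]$ under the projection $\pi_0\colon C^\infty(X,U(1))\to H^1(X,\ZZ)$ vanishes, since then $[c]$ lifts to a class valued in $C^\infty(X,U(1))_0$ and the preceding identifications produce the required $\mu_k$-valued representative.

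\textbf{Vanishing of $[\bar c]$ in the three cases.} In case (1), $b_1(X)=0$ forces $H^1(X,\ZZ)=0$ and the vanishing is immediate. In case (2), we invoke the topological rigidity of Hamiltonian loops (cited in the introduction as one of the main inputs to the paper): any Hamiltonian loop in $\Ham(X,\omega)$, lifted to a loop of bundle automorphisms of $L$ starting at $\id_L$, returns to a gauge transformation lying in $C^\infty(X,U(1))_0$. This allows us to define canonical lifts $\tilde g$ of each $g\in G$ (modulo $C^\infty(X,U(1))_0$) by lifting any Hamiltonian isotopy from $\id$ to $g$, producing a cocycle $c$ valued in $C^\infty(X,U(1))_0$, whence $[\bar c]=0$. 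For case (3), form the analogous pulled-back extension $1\to C^\infty(X,U(1))\to\wt\Gamma\to\Gamma\to 1$, with obstruction class $[c_\Gamma]$ and projection $[\bar c_\Gamma]\in H^2(\Gamma,H^1(X,\ZZ))$; since $\Gamma$ acts trivially on $H^1(X,\ZZ)$, this class corresponds to a \emph{central} extension $1\to H^1(X,\ZZ)\to E\to\Gamma\to 1$. The key observation is that because $H^1(X,\ZZ)$ is torsion-free while the Schur multiplier $H_2(\Gamma)$ is finite (as $\Gamma$ is finite), the commutator pairing $H_2(\Gamma)\to H^1(X,\ZZ)$ associated with $E$ must vanish; equivalently, $[E,E]\cap H^1(X,\ZZ)=0$, so the commutator subgroup $[E,E]\subseteq E$ maps isomorphically onto $[\Gamma,\Gamma]$. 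This isomorphism is a section of the restriction of $E$ to $[\Gamma,\Gamma]$, proving that $[\bar c_\Gamma]$ restricts to zero on $[\Gamma,\Gamma]$, and a fortiori that $[\bar c]$ vanishes on $G\subseteq[\Gamma,\Gamma]$.

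\textbf{Main obstacle.} The most delicate step is the commutator-splitting argument in case (3); the interplay between the torsion-freeness of $H^1(X,\ZZ)$ and the finiteness of $\Gamma$ is precisely what kills the Schur-multiplier component of the central extension $E$, which is the decisive input. A secondary point that must be handled with care is the assertion that the resulting finite cyclic group has order dividing $|G|$ rather than $|\Gamma|$; this is arranged by applying the Schur reduction at the final stage directly to the restricted cocycle on $G$, not to the cocycle on $\Gamma$. Case (2) is clean once the topological rigidity input is invoked, and case (1) is essentially immediate.
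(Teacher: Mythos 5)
Your proposal is correct, and for the general framework and for cases (1) and (2) it follows essentially the paper's route: your cohomological reduction (vanishing of the class in $H^2(G,H^1(X;\ZZ))$ as the only obstruction, followed by the divisibility of $C^\infty(X,\RR)$ and the surjectivity of $H^2(G,\mu_{|G|})\to H^2(G,S^1)$ to land in a finite cyclic kernel) is exactly the content of the paper's Theorem \ref{thm:existence-lifting} and Lemma \ref{lemma:roots-of-unity}, phrased via long exact sequences rather than explicit cocycle manipulations; and in case (2) you invoke the same external input, the topological rigidity of Hamiltonian loops of \cite{LMP,McD}. Be aware that in case (2) you are black-boxing the real work: one must construct the lifts by parallel transport along Hamiltonian isotopies and then identify the degree of the resulting holonomy cocycle along a loop $\gamma$ with $\la c_1(L),\Psi_*[S^1\times S^1]\ra$ via Stokes and Chern--Weil before \cite[Theorem 1.A]{LMP} applies; your stated consequence of rigidity is precisely what that computation establishes. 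Where you genuinely diverge is case (3). The paper argues geometrically: it averages a de Rham representative of $c_1(L)$, considers the torsor $\mM_\eta$ of fixed-curvature unitary connections modulo gauge over $T\simeq H^1(X;\RR)/H^1(X;\ZZ)$, shows the $\Gamma$-action on $\mM_\eta$ is by translations (here triviality on $H^1$ is used), and concludes that $[\Gamma,\Gamma]$ fixes a connection, whose stabilizing lifts have constant cocycle. You instead argue purely algebraically: the obstruction class defines a central extension of $\Gamma$ by the torsion-free group $H^1(X;\ZZ)$, and the five-term exact sequence forces the image of $H_2(\Gamma)\to H^1(X;\ZZ)$ to vanish, so $[E,E]$ splits the extension over $[\Gamma,\Gamma]$. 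This is a valid and arguably cleaner argument; both proofs ultimately exploit the same tension (commutators must die in a torsion-free abelian receptacle), but yours avoids the differential-geometric torsor entirely, while the paper's version produces concrete connection-preserving lifts. Your handling of the divisibility $|H|\mid |G|$ (applying the Schur reduction to the cocycle restricted to $G$) is also correct.
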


The proof of Theorem \ref{thm:liftings} under hypothesis (2)
(see Theorem
\ref{thm:lift-Ham-action-line-bundle}) uses a result of
Lalonde, McDuff and Polterovich \cite{LMP,McD} based on the
Seidel representation and implying the topological rigidity of
Hamiltonian loops. In contrast, the proof under hypothesis (1) or (3) (see Theorems
\ref{thm:lift-action-line-bundle} and \ref{thm:lift-commutator-action-line-bundle})
only uses elementary cohomological arguments.

\begin{remark}
\label{rmk:p-odd-2} Since Theorem \ref{thm:liftings} does not
involve any almost complex structure, it follows from Remark
\ref{rmk:p-odd-1} that if we only consider odd primes then
Theorem \ref{thm:fixed-pt-symplectic} is true for finite
$p$-subgroups of $\Diff(X)$ (so there is no need to consider
almost complex structures).
\end{remark}

\subsection{Contents}
Section \ref{s:equivariant-cohomology} reviews some material on
equivariant cohomology, introduces the notion of
generic cohomology class of a cyclic $p$-group, and proves
(Theorem \ref{thm:inverting-euler-class}) the invertibility of
the Euler class of normal bundles of partially fixed point
submanifolds of smooth $\ZZ/p^k$ actions (see the comments
after the statement of Theorem \ref{thm:main-fixed-point}). In
Section \ref{s:localization} we prove a localization theorem
for smooth actions of $\ZZ/p^k$ on manifolds. This will be the
main building block of the proof of Theorem
\ref{thm:main-fixed-point}. Other ingredients will be a few
technical results on finite $p$-groups proved in Section
\ref{s:finite-p-groups}. The proof of Theorem
\ref{thm:main-fixed-point} is given in Section
\ref{s:proof-thm:main-fixed-point}. Section
\ref{s:lifting-actions-line-bundle} contains the proof of
Theorem \ref{thm:liftings}. Finally, all theorems stated in this
introduction except for Theorems \ref{thm:main-fixed-point} and
\ref{thm:liftings} are proved in Section \ref{s:proofs-main}.

\subsection{Conventions}
By a natural number we understand a strictly positive integer.

Whenever we say that a group $G$ can be generated by $d$
elements we mean that there is a generating set
$\{g_1,\dots,g_d\}$ for $G$, where the $g_j$'s need not be
distinct.
If $G$ is a group and $S_1,\dots,S_r$ are subsets of $G$, we
denote by $\la S_1,\dots,S_r\ra$ the subgroup of $G$ generated
by the elements in the union of the $S_j$'s. Abusing notation,
if a set has a unique element $s$, we will sometimes denote the set using
the same symbol $s$ instead of $\{s\}$.
If $A$ is an abelian group,
$\wh{A}:=\Hom(A,\CC^*)$ denotes the group of characters of $A$.

As mentioned previously, we only consider manifolds without boundary.
Manifolds are not necessarily supposed to be connected, and
vector bundles over disconnected manifolds need not have
constant rank.
Group actions on smooth manifolds will always be assumed to be smooth.

All cohomology groups will be, unless otherwise specified, with integer coefficients.

\subsection{Acknowledgements}
I wish to thank A. Jaikin, A. Turull and C. S\'aez for useful
comments. Special thanks to A. Jaikin for providing the proof
of Lemma \ref{lemma:alpha-nilpotent-2}, which is much shorter
and more efficient than the original one. Many thanks finally
to the referee for a detailed and very useful report, for a
number of corrections and suggestions to improve the paper, and
for providing an alternative and more direct proof of Theorem
\ref{thm:lift-action-line-bundle}.

\section{Equivariant cohomology}
\label{s:equivariant-cohomology}

This section contains the results from equivariant cohomology
that will be used later in the localization arguments. Let us
first briefly recall the basic definitions (see e.g.
\cite[Chap. III, \S1]{tD} for details). If $G$ is a topological
group and $X$ is a topological space acted on continuously by
$G$ the $G$-equivariant cohomology of $X$ is by definition
$$H_G^*(X)=H^*(EG\times_GX),$$
where $EG\to BG$ is the universal $G$-principal bundle. The
space $X_G:=EG\times_GX$ is called the Borel construction, and
its natural projection $X_G\to BG$ endows $H_G^*(X)$ with the
structure of a module over $H^*(BG)$.

The two extreme cases are $G$ acting freely on $X$ (in this
case $X_G\simeq X/G$, at least if the action admits slices)
and $G$ acting trivially on $X$
(in this case $X_G\simeq BG\times X$). In
particular, if $X$ is a $G$-space consisting of a unique point
then $H_G^*(X)\simeq H^*(BG)$. For a
general $G$-space the equivariant cohomology can be used to
obtain information on isotropy groups. In our case $G$ will
always be a finite group, and for this reason it will be
essential to work with integer coefficients: rational and real
coefficients are of no use in this situation, since if $G$ is
finite then $H_G^*(X;\QQ)\simeq H^*(X/G;\QQ)$.

If $V\to X$ is a vector bundle and the action of $G$ on $X$
lifts to an action on $V$ by vector bundle automorphisms, then
$V_G$ is in a natural way a vector bundle over $X_G$. The
equivariant characteristic classes of $V$ are the ordinary
characteristic classes of $V_G$. So if $V$ is a complex (resp.
real oriented) vector bundle then the equivariant Chern classes
(resp. Euler class) of $V$ are $c_j^G(V):=c_j(V_G)$ (resp.
$\e^G(V):=\e(V_G)$).

\subsection{The pushforward map}
\label{s:umkehrungs}
\newcommand{\exc}{\operatorname{exc}}

Of fundamental importance in the localization arguments that we
are later going to use is the so-called
pushforward\footnote{There is no consensus in the literature on
how to name this notion. {\it Pushforward map} seems to be the
most usual name in the recent literature on equivariant
cohomology in symplectic geometry, see e.g. \cite{GGK,GS}. {\it
Umkehr/Umkehrung} (the German word for "reversal") was the name
used by Hopf \cite{Ho} in the first paper on the subject (in the
non equivariant context), and
it is still used in homotopy theory \cite{CK}. Atiyah and Bott
use it in their classical paper \cite{AB} on equivariant
cohomology in symplectic geometry. For ordinary non-equivariant
cohomology, one also uses {\it shriek} or {\it transfer map}
\cite[Chap. VI, Def. 11.2]{Br}. However, in equivariant
cohomology  {\it transfer map} usually means something
different, see e.g. \cite{Bo,Bre2,tD}.} map in
equivariant cohomology. Let $M$ and $N$ be compact smooth
oriented manifolds endowed with orientation preserving actions
of a compact Lie group $G$, and let $f:M\to N$ be a
$G$-equivariant smooth map. The pushforward is a map
$$f^G_*:H_G^*(M)\to H_G^{*+\dim N-\dim M}(N)$$
which enjoys the following properties:
\begin{itemize}
\item[(P1)] (composition) if $g:M\to R$ is another
    $G$-equivariant map, with $R$ a compact smooth oriented
    $G$-manifold, then $(g\circ f)_*^G=g^G_*\circ f^G_*$;
\item[(P2)] (product formula) for any $\alpha\in H_G^*(N)$
    and any $\beta\in H_G^*(M)$ we have
$$f^G_*((f^*\alpha)\beta)=\alpha(f^G_*\beta);$$
combining (P1) and (P2) it follows that $f^G_*$ is a
morphism of $H^*(BG)$-modules;
\item[(P3)] (embeddings) if $f$ is an embedding, then
    $f^G_*$ factors through the Thom isomorphism of the
    normal bundle $\nu\to M$; more precisely, $f^G_*$ is
    equal to the composition
\begin{multline*}
H_G^*(M)\stackrel{T}{\longrightarrow}
H_G^*(\nu,\nu\setminus\nu_0)\stackrel{\varepsilon}{\longrightarrow}
H_G^*(D(\nu),D(\nu)\setminus\nu_0)\longrightarrow \\
\stackrel{(\eta^*)^{-1}}{\longrightarrow}
H_G^*(\exp(D(\nu)),\exp(D(\nu))\setminus M)
\stackrel{\varepsilon}{\longrightarrow} H_G^*(N,N\setminus
M)\to H_G^*(N),
\end{multline*}
where $\nu_0\subset\nu$ is the zero section, $T$ is Thom
isomorphism, $D(\nu)\subset\nu$ is the open unit disk
bundle defined with respect to a $G$-invariant Riemannian
metric $h$ on $N$, $\eta:D(\nu)\to N$ is the exponential
map w.r.t. $h$ (which we assume to have injectivity radius
at least $1$, so that $D(\nu)$ is a tubular neighborhood of
$M$ --- otherwise we rescale $D(\nu)$ in the fiberwise
directions), $\varepsilon$ denotes excision, and the last
map is the natural map in cohomology; since
all maps except the last one are isomorphisms,
the image of $f^G_*$ can be
identified with the kernel of the restriction map
$H^*_G(N)\to H_G^*(N\setminus M)$; another consequence is
the formula
$$f^*f^G_*(\alpha)=\alpha\e^G(\nu),$$
where $\e^G$ is the equivariant Euler class;
\item[(P4)] (functoriality) let $\rho:K\to G$ be a
    morphism of groups; using $\rho$ and the $G$-action, we
    may define an action of $K$ on $M$; let $\pi:M\to\{*\}$
    be the map to a point; let $\rho_M^*:H^*_G(M)\to
    H^*_K(M)$ and $\rho^*:H^*(BG)\to H^*(BK)$ be the
    natural maps induced by $\rho$; then we have
    $$\rho^*\pi^G_*=\pi^K_*\rho_M^*.$$
\end{itemize}

In the context of symplectic geometry these properties and
their application to localization arguments are well known, but
they are typically applied to compact connected Lie group
actions, where real coefficients are enough (see e.g.
\cite{AB,GGK,GS}). The proof of (P1)---(P4) for real
coefficients is easily obtained using the Cartan--Weil model
for equivariant cohomology \cite[\S 10.7]{GS}.

Since for us the use of integral coefficients is crucial, we
briefly sketch for the reader's convenience the definition of
$f^G_*$ over $\ZZ$. Consider first the case $G=\{1\}$. Then
$f^G_*$ can be defined as $D_N^{-1}\circ f_*\circ D_M$, where
$D_M:H^*(M)\to H_{\dim M-*}(M)$ and $D_N:H^*(N)\to H_{\dim
N-*}(N)$ are the Poincar\'e duality maps and $f_*:H_*(M)\to
H_*(N)$ is the map induced in homology by $f$ (see \cite[Chap.
VI, Def. 11.2]{Br}). To define $f_*^G$ for arbitrary finite $G$
one can use finite dimensional approximations of the
classifying space $BG$. More precisely, in order to define
$f_r^G:H_G^r(M)\to H^{r+\dim N-\dim M}(N)$ one considers a
principal $G$-bundle $P\to B$ with $B$ a smooth compact,
connected and oriented manifold, such that $P$ is $k$-connected
for  $k$ big enough so that the classifying map $B\to BG$ for
$P$ induces isomorphisms
$$H^r(P\times_GM)\simeq H_G^r(M),\qquad H^{r+\dim N-\dim
M}(P\times_GN)\simeq H_G^{r+\dim N-\dim
M}(N)$$ (one can take $P$ to be a
Stiefel manifold, see e.g. \cite[Example C.1]{GGK}).
Combining these isomorphisms with the map
$$D^{-1}\circ f_*\circ D:H^r(P\times_GM)\to H^{r+\dim N-\dim M}(P\times_GN)$$
one obtains $f_r^G$ (here $D$ is
Poincar\'e duality and $f_*:H_*(P\times_GM)\to
H_*(P\times_GN)$ is the map induced by $f$). The
following lemma, whose proof is left to the reader (see
\cite[Chap. VI]{Br} for the necessary tools), can be used to
prove that the resulting map $f_*^G$ is independent of the
choice of $P\to B$.

\begin{lemma}
\label{lemma:square-umkehrungs} Consider a commutative diagram
\begin{equation}
\label{eq:diagrama-umkehr}
\xymatrix{X\ar[r]^i\ar[d]_f & Y \ar[d]^g \\
U \ar[r]_h & V}
\end{equation}
where $U,V,X,Y$ are compact, connected and oriented smooth
manifolds and $f,g,h,i$ are smooth maps. Suppose that $i$ and
$h$ are embeddings and the normal bundles $\nu_h\to U$ and
$\nu_i\to X$ satisfy $\nu_i\simeq f^*\nu_h$. Then we have
$$D_U f_* D_X i^*=h^*D_Vg_* D_Y$$ as maps from $H^*(Y)$ to
$H^*(U)$, where $D_M$ denotes Poincar\'e duality on the
manifold $M$.
\end{lemma}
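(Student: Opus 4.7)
My plan is to recognize the claimed identity as a base-change formula for the Poincaré-dual pushforward (umkehr): setting $\phi^!:=D_\bullet^{-1}\circ\phi_*\circ D_\bullet$ for any smooth map $\phi$ of compact oriented manifolds, the lemma becomes the equality of the two cohomology operations $h^*\circ g^!$ and $f^!\circ i^*$ from $H^*(Y)$ to $H^*(U)$. I will verify this by a direct cap-product computation that reduces the statement to an identity of Thom classes, and then invoke the hypothesis on normal bundles to establish that identity. All the ingredients are standard and may be found in \cite[Chap.\ VI]{Br}.

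Applying $D_U$ to both sides, the desired equality is equivalent to $f_*(i^*\beta\frown[X])=(h^*g^!\beta)\frown[U]$ in $H_*(U)$ for all $\beta\in H^*(Y)$. Pushing forward by $h_*$, the projection formula for $i$ together with the commutativity $g\circ i=h\circ f$ transforms the left-hand side into $g_*(\beta\frown i_*[X])$, while the projection formula for $h$ transforms the right-hand side into $(g^!\beta)\frown h_*[U]$. Since $h$ and $i$ are embeddings of closed oriented manifolds, Poincaré--Lefschetz duality produces cohomological Thom classes $\tau_h\in H^c(V)$ and $\tau_i\in H^c(Y)$, of common codimension $c$ because $\nu_i\simeq f^*\nu_h$, satisfying $h_*[U]=\tau_h\frown[V]$ and $i_*[X]=\tau_i\frown[Y]$. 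Substituting and using the cap/cup identity $\alpha\frown(\xi\frown z)=(\alpha\cup\xi)\frown z$ along with the projection formula for $g_*$, both sides rewrite as $g_*\big((\beta\cup\tau_i)\frown[Y]\big)$ and $g_*\big((\beta\cup g^*\tau_h)\frown[Y]\big)$ respectively, so the proof reduces to the single identity $g^*\tau_h=\tau_i$ in $H^c(Y)$.

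The identity $g^*\tau_h=\tau_i$ is a local statement near $i(X)$ in which the bundle hypothesis finally enters. Choose tubular neighborhoods $T_i\cong D(\nu_i)\subset Y$ and $T_h\cong D(\nu_h)\subset V$ of $i(X)$ and $h(U)$. After a small isotopy of $g$ supported in $T_i$, which alters neither $g_*$ nor $g^!$ by homotopy invariance of the pushforward, one may assume that $g|_{T_i}$ is the linear bundle map $D(\nu_i)\to D(\nu_h)$ covering $f$ induced by the given isomorphism $\nu_i\simeq f^*\nu_h$. Under such a fiberwise linear bundle map the Thom class pulls back to the Thom class, so the equality $g^*\tau_h=\tau_i$ holds in the relative group $H^c(T_i,T_i\setminus i(X))$; via excision and the long exact sequence of the pair $(Y,Y\setminus i(X))$, and using that both classes restrict to zero outside a neighborhood of the respective zero locus, this lifts to an equality in $H^c(Y)$.

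The main obstacle is the isotopy step: realizing the abstract bundle isomorphism $\nu_i\simeq f^*\nu_h$ through an actual bundle-map behavior of $g$ on tubular neighborhoods. If the given isomorphism is the one induced by $dg$ (a transversality hypothesis, which is the generic situation and the one that arises in the intended application to finite-dimensional approximations $P\times_G(-)$) this step is immediate from the tubular neighborhood theorem; otherwise one performs a compactly supported isotopy, at the cost of losing no cohomological information. A secondary wrinkle, the possible non-injectivity of $h_*$ in Step~1, is sidestepped by performing the final verification directly at the level of representatives supported near $i(X)$ and $h(U)$ via excision, instead of invoking injectivity of $h_*$. Once these two geometric reductions are in place the remaining manipulations are the formal cap/cup and projection-formula identities of \cite[Chap.\ VI]{Br}.
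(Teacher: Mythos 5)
The paper gives no proof of this lemma (it is explicitly ``left to the reader''), so I am judging your argument on its own terms. Its skeleton is right --- everything does reduce to a naturality statement for the Poincar\'e duals $\tau_h$, $\tau_i$ of the two submanifolds --- but the step you call the ``main obstacle'' is a genuine gap that cannot be closed in the stated generality. Your claim is that a homotopy of $g$ supported in $T_i$ lets one realize the \emph{abstract} isomorphism $\nu_i\simeq f^*\nu_h$ as the behaviour of $g$ on tubular neighbourhoods, after which $g^*\tau_h=\tau_i$ ``lifts'' from the relative group to $H^c(Y)$. This lifting requires $g^*\tau_h$ to be supported near $i(X)$, but $g^*\tau_h$ is only supported near $g^{-1}(h(U))$, and no homotopy of $g$ shrinks $g^{-1}(h(U))$ down to $i(X)$. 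Indeed, with the hypotheses exactly as stated the lemma is false: take $V=S^2\times S^2$, $U=Y=S^2$, $h=g$ the diagonal embedding onto $\Delta\subset V$, $X$ a point, $i$ its inclusion into $Y$ and $f$ the corresponding point of $U$. The square commutes, and $\nu_i$ and $f^*\nu_h$ are both trivial of rank $2$ over a point, hence abstractly isomorphic; yet $\la h^*D_Vg_*D_Y(1),[U]\ra=\la \PD_V[\Delta],[\Delta]\ra=[\Delta]\cdot[\Delta]=2$, while $\la D_Uf_*D_Xi^*(1),[U]\ra=\la\PD_U[\mathrm{pt}],[U]\ra=1$. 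What the lemma actually needs --- and what holds in the paper's application to $P_i\times_GM\hookrightarrow P'\times_GM$ --- is that the square be Cartesian and transverse: $i(X)=g^{-1}(h(U))$ and the isomorphism $\nu_i\simeq f^*\nu_h$ is the one induced by $dg$. (Transversality alone is not enough either: a degree-two map $g:S^2\to S^2$ over $h:\mathrm{pt}\to S^2$, with $X$ a single regular preimage, gives $2\neq 1$ again.) Your proof must state and use this hypothesis; the proposed isotopy does not substitute for it.

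The second gap is the reduction by $h_*$. Since $h_*$ is in general far from injective on $H_*(U)$ (already for $S^1\subset S^2$), showing that the two sides of $f_*(i^*\beta\frown[X])=(h^*g^!\beta)\frown[U]$ agree after applying $h_*$ proves nothing, and ``verifying at the level of representatives supported near $h(U)$'' is not a repair one can evaluate: the two classes being compared live on $U$, not in $V$ near $h(U)$. The standard way to organize the argument so that both problems disappear at once is to dualize differently. The identity is equivalent to $f_*\circ i^{!}=h^{!}\circ g_*$ on $H_*(Y)$, where $h^{!}:=D_Uh^*D_V^{-1}$ is the homological umkehr of the embedding $h$ and likewise for $i^{!}$; and $h^{!}$ factors as $H_*(V)\to H_*(V,V\setminus h(U))\simeq H_*(D(\nu_h),S(\nu_h))\to H_{*-c}(U)$, the last arrow being cap product with the Thom class (this is the homological counterpart of property (P3) in \S\ref{s:umkehrungs}). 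In the Cartesian, transverse situation $g$ carries the pair $(Y,Y\setminus i(X))$ to $(V,V\setminus h(U))$ and restricts over tubular neighbourhoods to a bundle map $\nu_i\to\nu_h$ covering $f$, so the identity becomes exactly the naturality of the Thom class --- your key computation $g^*\tau_h=\tau_i$, but carried out in the relative groups where it is true, with no appeal to injectivity of $h_*$.
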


To apply the previous lemma, note that for any two principal $G$-bundles $P_0\to B_0$ and
$P_1\to B_1$ one can find another principal $G$-bundle $P'\to
B'$ admitting $G$-equivariant embeddings $P_i\to P'$ ($i=0,1$)
in such a way that the resulting diagrams (with $f_i$ and $f'$
the maps induced by $f$)
$$\xymatrix{P_i\times_GM \ar[r]^i\ar[d]_{f_i} & P'\times_GM \ar[d]^{f'} \\
P_i\times_G N \ar[r]_h & P'\times_GN}
$$
satisfy the hypothesis of Lemma \ref{lemma:square-umkehrungs}.
This immediately implies that $f_*^G$ is well defined.

The proof that the pushforward map satisfies properties
(P1)--(P3) is straightforward using standard results as in
\cite[Chap. VI]{Br}. To prove (P4) one may use a variant of
Lemma \ref{lemma:square-umkehrungs} in which $f$ and $g$ are
assumed to be submersions, the square
(\ref{eq:diagrama-umkehr}) is Cartesian, and the maps $i$ and
$f$ can be identified with the projections $U\times_VY\to Y$
and $U\times_VY\to U$.

\subsection{Cohomology of cyclic $p$-groups}

Let us identify the circle $S^1$ with the complex numbers of modulus $1$, and let
$$\mu_{p^k}\subset S^1$$
be the group of $p^k$-th roots of unity, where $p$ is a prime
and $k$ a natural number. The following lemma is standard
(see e.g. \cite[Chap. III, \S 2]{tD}).
\begin{lemma}
\label{lemma:roots-unity-in-S}
Let $t=c_1(ES^1)\in H^2(BS^1)$.
We have $H^*(BS^1)=\ZZ[t]$. The map
$$\iota_k^*:H^*(BS^1)=\ZZ[t]\to H^*(B\mu_{p^k})$$ induced
by the inclusion $\iota_k:\mu_{p^k}\hookrightarrow S^1$ is
surjective and its kernel is the ideal of $H^*(BS^1)$
generated by $p^kt$. Furthermore, $\iota_k^*t=c_1(L)$, where
$L=E\mu_{p^k}\times_{\iota_k}\CC$.
\end{lemma}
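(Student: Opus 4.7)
My plan is to reduce the lemma to two standard integral cohomology computations. First, I recall that $BS^1\simeq\CP^\infty$, so $H^*(BS^1)=\ZZ[t]$ with $t=c_1(ES^1\times_{S^1}\CC)$ the hyperplane class in degree~$2$. For the cyclic group $\mu_{p^k}\simeq\ZZ/p^k$, the space $B\mu_{p^k}$ is modelled by the infinite lens space $S^\infty/\mu_{p^k}$; a standard cellular argument (or the computation of the group cohomology of a finite cyclic group) yields $H^0(B\mu_{p^k})=\ZZ$, $H^{2i-1}(B\mu_{p^k})=0$ and $H^{2i}(B\mu_{p^k})=\ZZ/p^k$ for $i\geq 1$, and as a graded ring $H^*(B\mu_{p^k})\simeq\ZZ[u]/(p^ku)$ for any generator $u$ of $H^2$.

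Second, I identify $\iota_k^*t$ with $c_1(L)$. By the naturality of Chern classes, the pullback of the universal line bundle $ES^1\times_{S^1}\CC$ along $B\iota_k\colon B\mu_{p^k}\to BS^1$ is canonically the associated line bundle $E\mu_{p^k}\times_{\iota_k}\CC=L$, so $\iota_k^*t=\iota_k^*c_1(ES^1\times_{S^1}\CC)=c_1(L)$. To see that this class is a ring generator, I invoke the standard isomorphism $\wh{A}\to H^2(BA;\ZZ)$, $\chi\mapsto c_1(EA\times_\chi\CC)$, valid for any finite abelian group $A$. Applied to $A=\mu_{p^k}$, the character $\iota_k$ generates $\wh{\mu_{p^k}}\simeq\ZZ/p^k$, so $c_1(L)$ generates $H^2(B\mu_{p^k})$.

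Therefore $\iota_k^*t$ is a ring generator of $H^*(B\mu_{p^k})$, and $\iota_k^*$ is surjective. For the kernel, given $f(t)=\sum_{i\geq 0}a_it^i$ with $\iota_k^*f=0$, the degree-$0$ part forces $a_0=0$, and since $\iota_k^*t^i$ generates $H^{2i}(B\mu_{p^k})\simeq\ZZ/p^k$ for each $i\geq 1$ we must have $p^k\mid a_i$ for $i\geq 1$; hence $f\in(p^kt)$. The reverse inclusion is immediate from $p^kc_1(L)=0$. There is no genuinely hard step; the main point requiring care is that the statement is inherently about integer coefficients (the rational analogue would be vacuous, as $H^*(B\mu_{p^k};\QQ)$ vanishes in positive degrees), so one must use the identification $\wh{A}\simeq H^2(BA;\ZZ)$ rather than any characteristic-zero substitute.
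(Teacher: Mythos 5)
Your proof is correct and is exactly the standard computation: the paper itself gives no argument for this lemma, citing it as standard (tom Dieck, Chap.~III, \S 2), and your write-up supplies precisely that standard proof via the lens-space/group-cohomology computation of $H^*(B\mu_{p^k};\ZZ)\simeq\ZZ[u]/(p^ku)$, naturality of $c_1$, and the identification $\wh{A}\simeq H^2(BA;\ZZ)$. Your closing remark that the integral (rather than rational) statement is the whole point is also well taken and consistent with the paper's emphasis on integer coefficients.
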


\begin{lemma}
\label{lemma:roots-unity-into-roots-unity} Let
$j:\mu_{p^{k-1}}\hookrightarrow\mu_{p^{k}}$ be the inclusion.
For any nonnegative integer $r$ there
is a commutative diagram with exact rows
$$\xymatrix{p^{k}\ZZ\la t^r\ra\ar@{^{(}->}[r] &
\ZZ\la t^r\ra=H^{2r}(BS^1)\ar@{=}[d]\ar[r]^-{\iota_{k}^*} & H^{2r}(B\mu_{p^{k}})\ar[d]^{j^*} \ar[r] & 0 \\
p^{k-1}\ZZ\la t^r\ra\ar@{^{(}->}[r] & \ZZ\la t^r\ra=H^{2r}(BS^1)\ar[r]^-{\iota_{k-1}^*} &
H^{2r}(B\mu_{p^{k-1}}) \ar[r] & 0 .}$$
\end{lemma}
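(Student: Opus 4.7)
The plan is to derive both rows directly from Lemma \ref{lemma:roots-unity-in-S} applied in a single fixed degree, and to obtain commutativity of the right-hand square from the factorization of inclusions $\iota_{k-1} = \iota_k \circ j$ into $S^1$.

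First I would apply Lemma \ref{lemma:roots-unity-in-S} to the group $\mu_{p^k}$. Surjectivity of $\iota_k^*$ in all degrees specializes to surjectivity on the degree-$2r$ piece. The kernel of $\iota_k^*$ is the principal ideal $(p^k t)\subset\ZZ[t]=H^*(BS^1)$; intersecting with the degree-$2r$ summand $\ZZ\la t^r\ra$ produces precisely $p^k\ZZ\la t^r\ra$ whenever $r\geq 1$. (For $r=0$ all three groups in the top row equal $\ZZ$ and $\iota_k^*$ is the identity, so the statement holds trivially provided one interprets the leftmost entry as the image of $p^k\ZZ\la 1\ra$ in $\ZZ\la 1\ra$, or else restricts the lemma to $r\geq 1$, which is the only case used in the sequel.) Replacing $k$ by $k-1$ and repeating the same argument yields the exactness of the bottom row.

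Next I would establish commutativity of the right-hand square. By definition of the inclusions, $\iota_{k-1}:\mu_{p^{k-1}}\hookrightarrow S^1$ factors as $\iota_k\circ j$, since $j$ is the inclusion $\mu_{p^{k-1}}\hookrightarrow\mu_{p^k}$. Applying the contravariant functor $H^{2r}(B(-);\ZZ)$ gives $\iota_{k-1}^* = j^*\circ\iota_k^*$, which is exactly the claimed commutativity.

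Finally, the unlabelled left vertical arrow is forced by compatibility with the identity map in the middle column together with the inclusions along the top and bottom of each row; it must therefore be the tautological inclusion $p^k\ZZ\la t^r\ra\hookrightarrow p^{k-1}\ZZ\la t^r\ra$, which is well-defined since $p^k\ZZ\subseteq p^{k-1}\ZZ$. I do not anticipate any genuine obstacle in this argument: every ingredient is either an immediate specialization of Lemma \ref{lemma:roots-unity-in-S} or a functorial rewriting of a factorization of inclusions. The lemma is best viewed as a repackaging of Lemma \ref{lemma:roots-unity-in-S} at two consecutive exponents, tailored for the inductive comparison arguments in the subsequent localization sections.
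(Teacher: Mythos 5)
Your proof is correct and follows essentially the same route as the paper's (two-line) proof: exactness of the rows by specializing Lemma \ref{lemma:roots-unity-in-S} to degree $2r$, and commutativity of the square from $\iota_{k-1}=\iota_k\circ j$ by functoriality. Your remark about the degree-zero case (where the kernel of $\iota_k^*$ on $H^0$ is trivial rather than $p^k\ZZ$, so the row as literally stated requires $r\geq 1$ or a reinterpretation) is a valid observation that the paper glosses over, and it is harmless since only positive degrees are used later.
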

\begin{proof}
The exactness of the rows follows from Lemma \ref{lemma:roots-unity-in-S}.
The commutativity of the square is a consequence of the identity
$\iota_k\circ j=\iota_{k-1}$.
\end{proof}

\subsection{Generic cohomology classes of cyclic $p$-groups}
\label{ss:generic-classes} Let $k$ be a natural number and let
\begin{equation}
\label{eq:def-tau-k}
\tau_k=\iota_k^*t\in H^2(B\mu_{p^k}),
\end{equation}
where $\iota_k:\mu_{p^k}\hookrightarrow S^1$ is the inclusion. We say that a cohomology class
$\beta\in H^*(B\mu_{p^k})$ is {\bf generic} if for any natural
$r$ the class $\tau_k^r \beta$ is nonzero.

It follows from Lemma \ref{lemma:roots-unity-in-S}
that if $\beta\in H^d(B\mu_{p^k})$ then
\begin{equation}
\label{eq:charac-generic}
\beta\text{ is generic }\Longleftrightarrow \left\{
\begin{array}{ll}
\beta\text{ is not divisible by $p^k$} & \quad\text{if $d=0$,}\\
\beta\neq 0 & \quad\text{if $d>0$.}\end{array}\right.
\end{equation}
The following two properties are obvious and will be used
repeatedly in our arguments:
\begin{enumerate}
\item if $\alpha_1,\dots,\alpha_n\in H^*(B\mu_{p^k})$
    and $\alpha_1+\dots+\alpha_n$ is generic, then at least
    one of the $\alpha_j$'s is generic;
\item if $\alpha\in H^*(B\mu_{p^k})$ and $r$ is a nonnegative integer,
    then $\alpha$ is generic if and only if
    $\tau_k^r\alpha$ is generic.
\end{enumerate}

\begin{lemma}
\label{lemma:free-action-roots-unity} Let $X$ be a smooth
oriented compact manifold endowed with an orientation
preserving action of $\mu_{p^k}$. Let $\pi:X\to\{*\}$ be the
map to a point. If for some $\alpha\in H^r_{\mu_{p^k}}(X)$ the
class $\pi^{\mu_{p^k}}_*\alpha\in H^*(B{\mu_{p^k}})$ is
generic, then the action of $\mu_{p^k}$ on $X$ is not free.
\end{lemma}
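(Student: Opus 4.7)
The plan is to argue by contradiction: assume that $\mu_{p^k}$ acts freely on $X$ and produce a natural number $N$ for which $\tau_k^N\cdot\pi_*^{\mu_{p^k}}(\alpha)$ must vanish, in direct contradiction with the genericity hypothesis.

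Writing $G:=\mu_{p^k}$ for brevity, the first step is a dimensional vanishing statement for the equivariant cohomology of $X$. Since $G$ is finite and acts freely and smoothly on the compact manifold $X$, the orbit map $X\to X/G$ is a covering, so $X/G$ is itself a compact manifold of dimension $\dim X$. Moreover, the Borel construction $X_G=EG\times_G X$ fibres over $X/G$ with contractible fibre $EG$, and therefore $X_G\simeq X/G$. In particular,
$$H^n_G(X)=H^n(X_G)\cong H^n(X/G)=0\quad\text{for every}\quad n>\dim X.$$

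The second step is to combine this vanishing with the module-theoretic behaviour of the pushforward. Let $\pi\colon X\to\{*\}$ be the collapse map and let $\alpha\in H^r_G(X)$. Property (P2) of Section \ref{s:umkehrungs} gives
$$\tau_k^N\cdot\pi_*^{G}(\alpha)=\pi_*^{G}\bigl(\pi^*(\tau_k^N)\cdot\alpha\bigr)$$
for every natural $N$. For $N$ large enough that $2N+r>\dim X$, the argument of $\pi_*^G$ on the right lies in $H^{2N+r}_G(X)=0$, so the left-hand side vanishes: $\tau_k^N\cdot\pi_*^{G}(\alpha)=0$. But by the very definition of genericity recorded in \S\ref{ss:generic-classes} (equivalently by \eqref{eq:charac-generic}), $\tau_k^N\cdot\pi_*^G(\alpha)$ is nonzero for every natural $N$, and this is the desired contradiction.

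No step is really delicate: the only point that requires a moment of care is the homotopy-theoretic identification $X_G\simeq X/G$ under a free action, which is immediate because the projection $EG\times X\to X$ is a principal $G$-bundle with contractible fibre $EG$. Everything else is a purely formal consequence of the $H^*(BG)$-linearity of $\pi_*^G$ established in Section \ref{s:umkehrungs}, so the content of the lemma is really just that multiplication by $\tau_k^N$ eventually pushes the equivariant cohomology of $X$ into the vanishing range under the free-action hypothesis.
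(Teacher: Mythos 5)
Your argument is correct and is essentially identical to the paper's own proof: both use the identification $X_G\simeq X/G$ for a free action, the vanishing of $H^n_G(X)$ above $\dim X$, and the $H^*(BG)$-linearity of $\pi^G_*$ from property (P2) to force $\tau_k^N\pi^G_*\alpha=0$ for large $N$, contradicting genericity. No further comment is needed.
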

\begin{proof}
Denote for convenience $G=\mu_{p^k}$. By the product formula
((P2) in  \S\ref{s:umkehrungs}) we have
$\pi^G_*((\pi^*\tau_k^s)\alpha)=\tau_k^s \pi^G_*\alpha$ for any
natural $s$ and any $\alpha\in H^*_{G}(X)$. Suppose that ${G}$
acts freely on $X$. Then the Borel construction $X_G$ is
homotopy equivalent to the smooth manifold $X/G$. If $s+r$ is
bigger than the dimension of each connected component of $X/G$
then for any $\alpha$ we have
$$\tau_k^s\alpha\in H_{G}^{s+r}(X)=H^{s+r}(X/G)=0,$$ so
$\tau_k^s\pi^G_*\alpha=\pi^G_*(\tau_k^s\alpha)=0$. Hence
$\pi^G_*\alpha$ is not generic.
\end{proof}

The definition of generic cohomology classes can be naturally extended
to abstract cyclic $p$-groups. Namely, if $G$ is
one such group we may take any isomorphism
$\theta:G\stackrel{\simeq}{\longrightarrow}\mu_{p^k}$ for some $k$ and
declare that a cohomology class $\beta_G\in H^*(BG)$ is generic if it
is equal to $\theta^*\beta$ for some generic class $\beta\in H^*(B\mu_{p^k})$.
By (\ref{eq:charac-generic}) this definition is independent of $\theta$.

\subsection{A nilpotence lemma}

Let $G=\mu_{p^k}$ and $G_0=\mu_{p^{k-1}}\subset G$ for some integer $k\geq 2$.
Suppose that $G$ acts smoothly on a compact smooth manifold $X$. Denote by
$$\lambda:H_G^*(X)\to H_{G_0}^*(X)$$
the restriction map.

\begin{lemma}
\label{lemma:nilpotence}
If $\alpha,\beta\in H_G^*(X)$ satisfy $\lambda(\alpha)=\lambda(\beta)$ then
$\alpha^{p^r}=\beta^{p^r}$ for some $r$.
\end{lemma}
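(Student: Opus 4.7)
Set $\gamma := \alpha - \beta \in \ker\lambda$; my aim is to find $r$ with $\alpha^{p^r} = \beta^{p^r}$. The strategy has three steps. First I would show $p\gamma = 0$. The map $\lambda$ is the pullback along the principal $\ZZ/p$-cover $\pi: EG \times_{G_0} X \to EG \times_G X$, whose deck group $G/G_0 \simeq \ZZ/p$ acts freely on the total space (since it does so on the contractible factor $EG$). The transfer $\tau$ associated with $\pi$ — defined for integer coefficients via finite-dimensional approximations of $EG$, as in Section~\ref{s:umkehrungs} — satisfies $\tau \circ \lambda = p \cdot \id$; hence $p\gamma = \tau\lambda(\gamma) = 0$.

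Second, I would show $\gamma^{p^r} = 0$ for some $r$; this is the heart of the argument. Reduction modulo $p$ produces $\bar\gamma \in H^*_G(X;\FF_p)$ with $\bar\lambda(\bar\gamma) = 0$. Because $k\geq 2$, the unique elementary abelian $p$-subgroup $\mu_p \subset G$ lies inside $G_0$, so the further restriction of $\bar\gamma$ to $\mu_p$ (and to $X^{\mu_p}$) also vanishes. Quillen's mod-$p$ nilpotence theorem \cite[\S 3]{Q} (the compact case we need has an elementary proof based on finite generation of $H^*_G(X;\FF_p)$ over $H^*(BG;\FF_p)$) then forces $\bar\gamma$ to be nilpotent, say $\bar\gamma^N = 0$. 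Lifting back to integer coefficients, this means $\gamma^N = p\eta$ for some $\eta\in H^*_G(X;\ZZ)$; combined with $p\gamma=0$ from Step~1, this gives
$$\gamma^{N+1} = \gamma\cdot p\eta = (p\gamma)\eta = 0.$$
Choosing $r$ with $p^r \geq N+1$ yields $\gamma^{p^r} = 0$.

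Third, I would conclude via binomial expansion in the graded-commutative ring $H^*_G(X)$. In the simplest case where $\alpha,\beta$ are homogeneous of the same even degree, $\gamma$ commutes with everything. In the expansion $(\beta+\gamma)^p = \sum_{j=0}^{p}\binom{p}{j}\beta^{p-j}\gamma^j$, each cross term with $0<j<p$ vanishes because $p\mid\binom{p}{j}$ and $p\gamma=0$, leaving $(\beta+\gamma)^p = \beta^p+\gamma^p$. Iteration gives $\alpha^{p^r} = \beta^{p^r}+\gamma^{p^r}$, which collapses to $\beta^{p^r}$ for $r$ as chosen in Step~2. Homogeneous odd-degree components with $p$ odd need no nilpotence at all: the coprime torsion conditions $2\gamma^2=0$ (graded commutativity) and $p\gamma^2=0$ (Step~1) force $\gamma^2=0$ directly. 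The remaining situations — $p=2$ with odd-degree components, and fully inhomogeneous $\alpha,\beta$ — reduce to the homogeneous case by splitting $\gamma$ into its homogeneous pieces, each of which lies separately in $\ker\lambda$ since $\lambda$ preserves degree, and by using that in a graded-commutative ring a finite sum of homogeneous nilpotents is itself nilpotent with index bounded by the sum of the individual indices. The principal obstacle throughout is Step~2 (Quillen's nilpotence); Steps~1 and~3 are essentially formal consequences.
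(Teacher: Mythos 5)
Your proposal is correct, and Steps 1 and 3 coincide with the paper's proof: the transfer for the degree-$p$ covering $X_{G_0}\to X_G$ gives $p\gamma=0$, and the final binomial expansion is the same (the paper writes it without worrying about graded commutativity at all, so your extra care there is a bonus rather than a defect). The genuine difference is the nilpotence step. The paper stays with integer coefficients: it introduces Quillen's sheaf $\hH_G^*$ on $X/G$, computes the restriction map on stalks --- $H^*(BG_x)$ injects diagonally when $G_x\subseteq G_0$, while for $G_x=G$ the kernel of $H^{2r}(B\mu_{p^k})\to H^{2r}(B\mu_{p^{k-1}})$ is $p^{k-1}\ZZ/p^k$, which squares to zero precisely because $k\geq 2$ --- concludes $\sigma(\delta^2)=0$, and then needs only the elementary compactness statement of \cite[Prop.\ 3.2]{Q} that $\Ker\sigma$ is nilpotent. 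You instead reduce mod $p$, invoke Quillen's detection theorem for elementary abelian subgroups, and lift back via the Bockstein sequence ($\bar\gamma^N=0\Rightarrow\gamma^N=p\eta\Rightarrow\gamma^{N+1}=(p\gamma)\eta=0$); this is a clean and correct alternative, and the Bockstein lifting trick is a nice way to avoid the integral stalk analysis. Two small caveats. First, the detection theorem you quote is the main theorem of \cite{Q}, not the elementary \S 3 material (though for cyclic $G$ it reduces to the same stalk computation the paper does), and its elementary proof in the compact case rests on the finite-invariant-open-cover argument, not on finite generation over $H^*(BG;\FF_p)$. Second, the passage to $X^{\mu_p}$ is unnecessary and slightly hazardous --- the kernel of $H^*_{\mu_p}(X;\FF_p)\to H^*_{\mu_p}(X^{\mu_p};\FF_p)$ is not nilpotent for free actions --- but your argument does not actually need it, since the restriction of $\bar\gamma$ to $H^*_{\mu_p}(X;\FF_p)$ itself already vanishes (it factors through $\bar\lambda$), and that is the hypothesis the detection theorem wants.
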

\begin{proof}
We first prove that for any $\delta\in\Ker\lambda$ we have
$p\delta=0$. Fixing some model $EG\to BG$ for the universal
bundle of $G$, we can identify $X_{G_0}$ with
$EG\times_{G_0}X$, so that the map
$$q:X_{G_0}\to X_G=EG\times_GX$$
inducing $\lambda$ can be chosen to be a degree $p$ covering
(namely, the quotient by the residual action of $G/G_0$). This
immediately implies that any nontrivial element in the kernel of
$\lambda$ has order $p$. To prove this, let $S_*$ be the group
of singular chains, define a "trace" map $\tau_*:S_*(X_G)\to
S_*(X_{G_0})$ by sending a singular simplex $s:\Delta^r\to X_G$
to the sum of its $p$ different lifts $\Delta^r\to X_{G_0}$,
and extend the definition linearly to singular chains; then
$\tau_*$ is a morphism of complexes, and $q_*\tau_*:S_*(X_G)\to
S_*(X_G)$ is multiplication by $p$; dualising, the map
$\tau^*:S^*(X_{G_0})\to S^*(X_G)$ satisfies
$\tau^*q^*(\alpha)=p\alpha$ for every $\alpha\in S^*(X_G)$.
Now, if $\lambda([\alpha])=[q^*\alpha]=0$ then
$p[\alpha]=[\tau^*q^*\alpha]=\tau^*[q^*\alpha]=0$.

Let $\pi:X\to X/G$ be the quotient map.
Consider the sheaf $\hH_G^*$ (resp. $\hH_{G_0}^*$) of graded rings on $X/G$ defined by
$\hH_G^*(U)=H_G^*(\pi^{-1}(U))$ (resp. $\hH_{G_0}^*(U)=H_{G_0}^*(\pi^{-1}(U))$) for every open
subset $U\subseteq X/G$ (see \cite[\S 3]{Q}). We have a commutative diagram of rings
\begin{equation}
\label{eq:equivariant-cohomology-sheaf}
\xymatrix{H^*_G(X)\ar[d]_{\sigma}\ar[r]^{\lambda} & H_{G_0}^*(X) \ar[d]^{\sigma_0} \\
H^0(X/G,\hH_G^*)\ar[r]^{\mu} & H^0(X/G,\hH_{G_0}^*),}
\end{equation}
where $\mu$ is defined by restriction similarly to $\lambda$
and $\sigma,\sigma_0$ are defined (also by restriction) in
\cite[\S 3]{Q}. We next prove that for any $\beta\in
H^0(X/G,\hH_G^*)$ such that $\mu(\beta)=0$ we have $\beta^2=0$.
We need for that to describe the stalks of $\hH_G^*$ and
$\hH_{G_0}^*$ and the action of the restriction map $\hH^*_G\to
\hH_{G_0}^*$ at the level of stalks. Let $y\in X/G$ and let
$x\in\pi^{-1}(y)$ be any preimage. Let $G_1=G_x$ be the
stabiliser of $x$. Clearly, either $G_1\subseteq G_0$ or
$G_1=G$. If $G_1\subseteq G_0$ then $(\hH_G^*)_y\simeq
H^*(BG_1)$, $(\hH_{G_0}^*)_y\simeq (H^*(BG_1))^{\oplus p}$, and
the natural restriction map $\rho_y:(\hH_G^*)_y\to
(\hH_{G_0}^*)_y$ can be identified with the diagonal inclusion.
In particular, $\rho_y$ is injective in this case. If $G_1=G$
then $(\hH_G^*)_y\simeq H^*(BG)$, $(\hH_{G_0}^*)_y\simeq
H^*(BG_0)$, and $\rho_y:(\hH_G^*)_y\to (\hH_{G_0}^*)_y$ can be
identified with the map $j^*$ in Lemma
\ref{lemma:roots-unity-into-roots-unity}. The latter has the
property that for any $a\in\Ker j^*$ we have $a^2=0$. With
these observations, the claim is clear.

We next claim that if $\delta\in\Ker \lambda$ then
$\delta^{p^r}=0$ for sufficiently big $r$. By the commutativity
of (\ref{eq:equivariant-cohomology-sheaf}) if
$\delta\in\Ker\lambda$ then $\mu(\sigma(\delta))=0$, which by
the previous claim implies that
$\sigma(\delta^2)=\sigma(\delta)^2=0$. Now the claim follows
from \cite[Prop. 3.2 and Remark 3.4]{Q}.

We are now ready to prove the lemma. Suppose that
$\alpha,\beta\in H_G^*(X)$ satisfy
$\lambda(\alpha)=\lambda(\beta)$, so that
$\delta=\beta-\alpha\in\Ker\lambda$. Let $r$ be a natural
number such that $\lambda^{p^r}=0$. Then
$$\beta^{p^r} =(\alpha+\delta)^{p^r}=\alpha^{p^r}+\left(\sum_{j=1}^{p^r-1}\left(p^r\atop j\right)\alpha^j\delta^{p^r-j}\right)+\delta^{p^r}=\alpha^{p^r}+\delta^{p^r}=\alpha^{p^r},$$
because $\left(p^r\atop j\right)$ is divisible by $p$ for any
$1\leq j\leq p^r-1$.
\end{proof}

\begin{lemma}
\label{lemma:nilpotence-2}
Let $G'\subseteq G=\mu_{p^k}$ be any nontrivial subgroup. Let
$\nu:H^*_G(X)\to H^*_{G'}(X)$ be the restriction map. If
$\alpha,\beta\in H_G^*(X)$ satisfy $\nu(\alpha)=\nu(\beta)$ then
$\alpha^{p^s}=\beta^{p^s}$ for some $s$.
\end{lemma}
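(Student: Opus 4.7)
The plan is to reduce to Lemma \ref{lemma:nilpotence} by iterating along the chain of subgroups between $G'$ and $G$. Since $G = \mu_{p^k}$ is cyclic of prime power order, every nontrivial subgroup has the form $G' = \mu_{p^j}$ for a uniquely determined $j$ with $1 \leq j \leq k$. I would proceed by induction on the integer $k-j$, the case $k-j = 0$ being trivial and the case $k-j = 1$ being exactly Lemma \ref{lemma:nilpotence}.

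For the inductive step, write $G'' = \mu_{p^{k-1}}$ and factor the restriction $\nu$ as $\nu = \nu' \circ \lambda$, where $\lambda : H^*_G(X) \to H^*_{G''}(X)$ is the restriction map (to which Lemma \ref{lemma:nilpotence} applies directly, since $k \geq j+1 \geq 2$) and $\nu' : H^*_{G''}(X) \to H^*_{G'}(X)$ is the restriction to $G' \subseteq G''$ (a chain of length $k-1-j < k-j$, so the inductive hypothesis applies). Starting from $\nu(\alpha) = \nu(\beta)$ we obtain $\nu'(\lambda(\alpha)) = \nu'(\lambda(\beta))$, and the inductive hypothesis gives some natural number $r_1$ with $\lambda(\alpha)^{p^{r_1}} = \lambda(\beta)^{p^{r_1}}$. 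Since $\lambda$ is a ring homomorphism this rewrites as $\lambda(\alpha^{p^{r_1}}) = \lambda(\beta^{p^{r_1}})$, and now Lemma \ref{lemma:nilpotence} produces $r_2$ with $(\alpha^{p^{r_1}})^{p^{r_2}} = (\beta^{p^{r_1}})^{p^{r_2}}$, i.e. $\alpha^{p^{r_1+r_2}} = \beta^{p^{r_1+r_2}}$.

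There is no real obstacle here; the only thing to keep track of is that every intermediate restriction in the chain is of the form $\mu_{p^i} \supset \mu_{p^{i-1}}$ with $i \geq 2$, which is guaranteed by the hypothesis that $G'$ is nontrivial (hence $j \geq 1$ and every $i$ that arises satisfies $i \geq j+1 \geq 2$). So Lemma \ref{lemma:nilpotence} can be applied at each step of the induction, and the conclusion follows with $s = r_1 + r_2$ in the inductive step.
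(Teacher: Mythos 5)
Your proof is correct and is exactly the argument the paper intends: the paper's proof of Lemma \ref{lemma:nilpotence-2} consists of the single line ``combine the previous lemma with induction on $|G/G'|$'', and your induction on $k-j$ (equivalently on $|G/G'|=p^{k-j}$), factoring the restriction through $\mu_{p^{k-1}}$ and adding the two exponents, is precisely that argument written out. The point you flag at the end --- that every intermediate restriction $\mu_{p^i}\supset\mu_{p^{i-1}}$ has $i\geq 2$ because $G'$ is nontrivial, so Lemma \ref{lemma:nilpotence} is applicable at each stage --- is indeed the only hypothesis that needs checking, and you have checked it.
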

\begin{proof}
Combine the previous lemma with induction on $|G/G'|$.
\end{proof}

\subsection{Decomposing vector bundles according to characters}
\label{ss:decompose-equivariant-bundles}

Let $A$ be a finite abelian group acting smoothly on a manifold
$X$ and let $E\to X$ be an
$A$-equivariant complex vector bundle. Recall that
$\wh{A}$ denotes the group $\Hom(A,\CC^*)$ of characters of $A$. For
any $\xi\in\wh{A}$ we define $E^{A,\xi}$ to be the subbundle of
$E|_{X^{A}}$ consisting of those vectors on which the action of
$A$ is given by multiplication by the character $\xi$. Namely,
$$E^{A,\xi}=\{v\in E_x\mid x\in X^{A},\,\alpha\cdot v=\xi(\alpha)v\text{ for every $\alpha\in A$}\}.$$
Since $A$ is abelian, the irreducible representations of $A$
are one-dimensional. This implies
$$E|_{X^{A}}=\bigoplus_{\xi\in \wh{A}}E^{A,\xi}.$$

\subsection{Rings generated by Chern classes}
\label{ss:Chern-classes} Let $X$ be a smooth manifold endowed
with a smooth action of a finite group $\Gamma$. Let
$W_1,\dots,W_s$ be $\Gamma$-equivariant complex vector bundles
over $X$. Let $A\subseteq \Gamma$ be a normal abelian subgroup.
We denote by
$$\cC_A(W_1,\dots,W_s)\subseteq H^*(X^A)$$
the subring generated by $1$ and the Chern classes of the
vector bundles $\{W_j^{A,\xi}\}$, where $j$ and $\xi$ run over
the sets $\{1,\dots,s\}$ and $\wh{A}$ respectively. When $A$ is
trivial we usually suppress it from the notation, so we write
$\cC(W_1,\dots,W_s)\subseteq H^*(X)$.

Let $G\subseteq\Gamma$ be any subgroup contained in the
centralizer of $A$.
Then $X^A$ is $G$-invariant and the $G$-equivariant Chern
classes of the bundles $W_j^{A,\xi}$ are naturally defined. We
denote by $\cC_A^G(W_1,\dots,W_s)\subseteq H_G^*(X^A)$ the
$G$-equivariant counterpart of $\cC_A(W_1,\dots,W_s)$. Namely,
$\cC_A^G(W_1,\dots,W_s)$ is the subring generated by $1$ and
the $G$-equivariant Chern classes of the vector bundles
$\{W_j^{A,\xi}\}$, where $j$ and $\xi$ run over the sets
$\{1,\dots,s\}$ and $\wh{A}$ respectively. Again, if $A$ is
trivial we denote $\cC_A^G(W_1,\dots,W_s)$ by
$\cC^G(W_1,\dots,W_s)$.

These definitions make sense more generally if each $W_j$ is a
$\Gamma$-equivariant complex vector bundle over a
$\Gamma$-invariant submanifold $Y_j\subseteq X$ containing
$X^A$, where the submanifolds $Y_1,\dots,Y_s$ need not be equal
to $X$ and may even be different among themselves.

\subsection{Inverting the equivariant Euler class up to $\tau_k$}
\label{ss:inverting-Euler-class} Let $k$ be any natural number,
let $G=\mu_{p^k}$ and let $H=\mu_p\subseteq G$. Let $W$ be a
complex vector bundle over a compact smooth manifold $X$, and
suppose that $G$ acts complex linearly on $W$ lifting a smooth
action on $X$. Suppose that $W^H=X$, so that:
\begin{enumerate}
\item the action of $H$ on $X$ is trivial, and
\item for any $x\in X$, any $h\in H\setminus\{1\}$ and any $v\in W_x\setminus\{0\}$
 we have $h\cdot v\neq v$.
\end{enumerate}
Define $\tau_k$ as in Subsection \ref{ss:generic-classes}. We
have:

\begin{theorem}
\label{thm:inverting-euler-class}
There exists some $f\in
\cC^G_H(W)[\tau_k]\subseteq H^*_G(X)$ and some positive integer
$U$ such that $\e^G(W)f=\tau_k^U$.
\end{theorem}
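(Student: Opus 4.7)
The plan is to expand $\e^G(W)$ in powers of $\tau_k$, identify an integer leading coefficient coprime to $p$, show that the lower-order corrections are nilpotent in $H^*_G(X)$, and then extract a pure power of $\tau_k$ by a binomial manipulation. Since $H$ is central in $G$ and acts trivially on $X$, the $H$-isotypic decomposition $W=\bigoplus_{\xi\in\wh H}W^{H,\xi}$ is into $G$-equivariant subbundles, and condition~(2) in the hypothesis $W^H=X$ forces the trivial character to be absent. For each nontrivial $\xi\in\wh H$, the restriction $\wh G\to\wh H$ is surjective (as $\mu_p\hookrightarrow\mu_{p^k}$ is an inclusion of cyclic $p$-groups), so I fix an extension $\chi_\xi\in\wh G$ corresponding to an integer $m_\xi$ coprime to $p$, and let $L_{\chi_\xi}$ denote the trivial line bundle over $X$ equipped with the $G$-action through $\chi_\xi$. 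Then $V_\xi:=W^{H,\xi}\otimes L_{\chi_\xi}^{-1}$ is $G$-equivariant with trivial $H$-action, and since $L_{\chi_\xi}$ is pulled back from a point under the $G$-equivariant map $X\to\{*\}$, functoriality of the Borel construction gives $c_1^G(L_{\chi_\xi})=m_\xi\tau_k\in H^2_G(X)$.

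Applying the equivariant identity $c_j(V\otimes L)=\sum_{i=0}^{j}\binom{r-i}{j-i}c_i(V)c_1(L)^{j-i}$ to $V_\xi=W^{H,\xi}\otimes L_{\chi_\xi}^{-1}$ expresses each $c_i^G(V_\xi)$ as a polynomial in the $c_j^G(W^{H,\xi})$ and $\tau_k$, so $c_i^G(V_\xi)\in R:=\cC^G_H(W)[\tau_k]$; in particular
\[
\e^G(W^{H,\xi})=c_{r_\xi}^G(V_\xi\otimes L_{\chi_\xi})=\sum_{i=0}^{r_\xi}c_i^G(V_\xi)(m_\xi\tau_k)^{r_\xi-i}\in R.
\]
The crucial point is that each $c_i^G(V_\xi)$ with $i\geq 1$ is nilpotent in $H^*_G(X)$: since $H$ acts trivially on $V_\xi$, restriction to $H^*_H(X)\cong H^*(X)\otimes H^*(BH)$ sends $c_i^G(V_\xi)$ to the ordinary Chern class $c_i(V_\xi)\in H^{2i}(X)$, which is nilpotent because $X$ is compact, and then Lemma~\ref{lemma:nilpotence-2} transports this nilpotence back to $H^*_G(X)$.

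Taking the product over $\xi$ and extracting the leading $\tau_k^M$ term with $M=\rk W$, one obtains
\[
\e^G(W)=u\tau_k^M+\delta,\qquad u=\prod_{\xi\neq 1}m_\xi^{r_\xi}\in\ZZ,
\]
where $u$ is coprime to $p$ and $\delta\in R$ lies in the ideal $I$ of $R$ generated by the finitely many nilpotent elements $\{c_i^G(V_\xi):i\geq 1,\ \xi\neq 1\}$. Since an ideal finitely generated by nilpotent elements in a commutative ring is itself nilpotent, $\delta^N=0$ for some $N\geq 1$. From $(u\tau_k^M)^N=(\e^G(W)-\delta)^N$ the binomial expansion has vanishing $j=N$ term, and every other term contains a factor of $\e^G(W)$, giving $u^N\tau_k^{MN}=\e^G(W)\,g$ for some $g\in R$. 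Since Lemma~\ref{lemma:roots-unity-in-S} forces $p^k\tau_k=0$ in $H^*(BG)$ and hence $p^k\tau_k^r=0$ in $H^*_G(X)$ for $r\geq 1$, and since $u^N$ is coprime to $p$, choosing $u''\in\ZZ$ with $u^Nu''\equiv 1\pmod{p^k}$ yields $\tau_k^{MN}=u''u^N\tau_k^{MN}=\e^G(W)(u''g)$, so the theorem holds with $f=u''g\in R$ and $U=MN$. The main obstacle is the nilpotence promotion via Lemma~\ref{lemma:nilpotence-2}: without it the positive-degree classes $c_i^G(V_\xi)$ need not be nilpotent in $H^*_G(X)$ (as $\tau_k$ itself illustrates), and the binomial elimination of $\delta$ would collapse.
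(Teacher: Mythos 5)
Your proof is correct and follows essentially the same route as the paper: decompose $W$ by characters of $H$, twist each summand by a character to trivialize the $H$-action so that $\e^G(W)$ becomes a unit times $\tau_k^{\rk W}$ plus a nilpotent correction coming from positive-degree Chern classes on the compact base, invert by the binomial trick, and use Lemma~\ref{lemma:nilpotence-2} to bridge $H$- and $G$-equivariant cohomology. The only (harmless) reorganization is that you apply Lemma~\ref{lemma:nilpotence-2} to each class $c_i^G(V_\xi)$ individually and then compute entirely in $H^*_G(X)$, whereas the paper carries out the inversion in $H^*_H(X)$ first and applies the lemma once to the final identity.
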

\begin{proof}

We identify the group of characters of $H$ with $\ZZ/p$, by assigning to $\chi\in\ZZ/p$
the character $\rho_{\chi}:H\to\CC^*$, $\rho_{\chi}(\theta)=\theta^{\chi}$.
Define $W_{\chi}:=W^{H,\rho_{\chi}}.$
We have a
decomposition as a Whitney sum of complex vector bundles
$$W=\bigoplus_{\chi\in \ZZ/p}W_{\chi}.$$
Assumption (2) above implies that $W_0=0$. If $k$ is an integer
we denote for convenience $W_k=W_{\ov{k}}$, where $\ov{k}$ is
the class of $k$ in $\ZZ/p$. Then
$$\cC^G_H(W)=\cC^G(W_1,\dots,W_{p-1}).$$
Since $H$ acts trivially on $X$, we have $X_H=X\times BH$, so by K\"unneth there is a natural inclusion
$$H^*(X)\otimes H^*(BH)=H^*(X)[\tau]/\{p\tau=0\}\longrightarrow H^*_H(X)$$
where $\tau$ corresponds to $\tau_1\in H^2(BH)$. Let $r_{\chi}$
denote the (complex) rank of $W_{\chi}$. For any $\chi\in\ZZ/p$
let $\CC_{\chi}=X\times\CC$ denote the trivial line bundle over
$X$ with the action of $H$ given by $h\cdot
(x,z)=(x,\rho_{\chi}(h)z)$. By Lemma
\ref{lemma:roots-unity-in-S} the first $H$-equivariant Chern
class of $\CC_{\chi}$ is
$$c_1^H(\CC_{\chi})=\chi\tau.$$
Let $W_{\chi,0}=W_{\chi}\otimes \CC_{-\chi}$. Then $H$ acts
trivially on $W_{\chi,0}$, so the $H$-equivariant Chern class
$c_j^H(W_{\chi,0})$ can be identified with the usual (non
equivariant) Chern class $c_j(W_{\chi,0})$ (which of course
coincides with $c_j(W_{\chi}))$ via the composition of maps
$$H^*(X)\ni\alpha\mapsto \alpha\otimes 1\in
H_H^*(X)\otimes H^0(BH)\subseteq H_H^*(X)\otimes H^*(BH)\to
H^*_H(X).$$ Indeed, the Borel construction applied to
$W_{\chi,0}$ gives a vector bundle on $X_H$ which can be
identified with the pullback of $W_{\chi}$ via the projection
map $X_H\to X$. We may thus write, somewhat abusively,
$c_j^H(W_{\chi,0})=c_j(W_{\chi})$, where here (and below) we
implicitly regard $c_j(W_{\chi})$ as a class in $H_H^*(X)$.
Since $W_{\chi}=W_{\chi,0}\otimes\CC_{\chi}$, we have
$$c_j^H(W_{\chi})=c_j(W_{\chi})+\sum_{k=1}^jc_{j-k}(W_{\chi})(\chi\tau)^k\left( r_{\chi}-j+k\atop k\right).$$
Hence $c_j^H(W_{\chi})\in \cC(W_{\chi})[\tau]$. These formulas
also imply, using induction on $j$, that
$c_j(W_{\chi})\in\cC^H(W_{\chi})[\tau]$. Hence we have:
\begin{equation}
\label{eq:same-rings}
\cC(W_1,\dots,W_{p-1})[\tau]=\cC^H(W_1,\dots,W_{p-1})[\tau]
\end{equation}
Let $r=\sum r_{\chi}$ be the rank of $W$. We have
$$\e^H(W)=\prod_{\chi\in(\ZZ/p)^{\times}}\e^H(W_{\chi})=
\prod_{\chi\in(\ZZ/p)^{\times}}c_{r_{\chi}}^H(W_{\chi})=\prod_{\chi\in(\ZZ/p)^{\times}}
\left(\sum_{j=0}^{r_{\chi}}(\chi\tau)^jc_{r_{\chi}-j}(W_{\chi})\right),$$
so we may write
$$\e^H(W)=a\tau^r+P$$
where $a\in(\ZZ/p)^{\times}$ and $P$ is a polynomial in $\tau$
and the non-equivariant Chern classes
$\{c_j(W_{\chi})\}_{j\geq 0,\chi\in(\ZZ/p)^{\times}}$, all of
whose monomials have at least one factor of the form
$c_j(W_{\chi})$ with $j\geq 1$. In particular, there exists
some natural number $s$ such that $P^{s+1}=0$ (because $X$ is
compact), so
$$\psi_H=a^{-(s+1)}\left(a^s\tau^{rs}+\sum_{j=1}^s(-1)^ja^{s-j}\tau^{r(s-j)}P^j\right)$$
satisfies $\e^H(W)\psi_H=\tau^R$, where $R=r(s+1)$.
Furthermore,
$$\psi_H\in\cC^H(W_1,\dots,W_{p-1})[\tau]$$
by (\ref{eq:same-rings}), so we may write
$$\psi_H=Q(\tau,c_1^H(W_1),\dots,c_{r_1}^H(W_1),\dots,c_1^H(W_{p-1}),\dots,c_{r_{p-1}}^H(W_{p-1})),$$
for some polynomial $Q\in\ZZ[x_0,x_1,\dots,x_r]$.

Now let
$$\nu:H_G^*(X)\to H_H^*(X)$$
be the restriction map and define
$$\psi_G:=Q(\tau_k,c_1^G(W_1),\dots,c_{r_0}^G(W_1),\dots,c_1^G(W_{p-1}),\dots,c_{r_{p-1}}^G(W_{p-1}))
\in H^*_G(X).$$
Since $\nu(\tau_k)=\tau$, $\nu(\e^G(W))=\e^H(W)$ and $\nu(c_j^G(W_{\chi}))=c_j^H(W_{\chi})$, we have
$$\nu(\e^G(W)\psi_G)=\e^H(W)\psi_H=\tau^R=\nu(\tau_k^R).$$
By Lemma \ref{lemma:nilpotence-2}, there exists some $S$ such that
$$(\e^G(W)\psi_G)^S=\tau_k^{RS}.$$
Writing $U=RS$ and $f=\e^G(W)^{S-1}\psi_G^S$, it follows that
$\e^G(W)f=\tau_k^U$, as desired.
\end{proof}

\section{Localization}
\label{s:localization}

Let $p$ be a prime and $k$ a natural number. A {\it $p^k$-admissible
tuple} is a tuple of the form
$$(X,G,A,V_1,\dots,V_r),$$
where:
\begin{itemize}
\item $X$ is a closed smooth manifold, endowed with an
    almost complex structure $J$,
\item $G$ is a finite $p$-group acting smoothly (but non
    necessarily effectively) on $X$ and preserving $J$,
\item $A$ is an abelian normal
subgroup of $G$ such that $X^A\neq\emptyset$,
\item $V_1,\dots,V_r$ are $G$-equivariant complex vector bundles
over $X$,
\end{itemize}
subject to the following condition. Let $N\to X^A$ be the
normal bundle of the inclusion in $X$, with the complex
structure induced by $J$. Denote by $Z_G(A)$ the centralizer of
$A$ in $G$. Then there exist a $Z_G(A)$-invariant open and
closed submanifold $M\subseteq X^A$ and a cohomology class
$$\alpha\in\cC_A(V_1,\dots,V_r,N)$$
such that
$\la [M],\alpha\ra$ is
not divisible by $p^k$.

The following is the main result of the present section.

\begin{theorem}
\label{thm:admissibility-hereditary} Let
$(X,G,A,V_1,\dots,V_r)$ be a $p^k$-admissible tuple. Suppose
that there exists some $h\in Z_G(A)$ whose class $[h]$ in $G/A$
is $G$-invariant and has order $p^k$. Then there exists some
$0\leq i<k$ such that setting $A'=\la A,h^{p^i}\ra$ the tuple
$(X,G,A',V_1,\dots,V_r)$ is $p^k$-admissible.
\end{theorem}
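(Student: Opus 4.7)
My plan is to prove the theorem with $i=k-1$, so $A':=\langle A,h^{p^{k-1}}\rangle$. The idea is to localize the pairing $\langle[M],\alpha\rangle$ onto the fixed locus $Y:=M^{h^{p^{k-1}}}$ via Theorem \ref{thm:inverting-euler-class} and then to convert the equivariant identity so obtained into the non-equivariant admissibility datum on $Y$.

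First I would lift $\alpha$ to $H$-equivariant cohomology, where $H:=\langle[h]\rangle\cong\mu_{p^k}$ acts on $M\subseteq X^A$: since $\tilde H:=\langle A,h\rangle$ centralizes $A$, its action factors through $H$ and preserves the $A$-isotypic decompositions of each $V_j|_M$ and of the normal bundle $N|_M$. This produces $\alpha^H\in\cC_A^H(V_1,\dots,V_r,N)\subseteq H^*_H(M)$ whose pushforward $(\pi_M)^H_*(\alpha^H)\in H^0(BH)=\ZZ$ equals $\langle[M],\alpha\rangle$ by property (P4), hence is $H$-generic. Let $C_1:=\langle h^{p^{k-1}}\rangle\subset H$ be the unique subgroup of order $p$, so $Y=M^{C_1}$ with normal bundle $\nu$ carrying the complex structure induced by $J$. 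Theorem \ref{thm:inverting-euler-class} supplies $f\in\cC^H_{C_1}(\nu)[\tau_k]$ and $U\geq 1$ with $\e^H(\nu)f=\tau_k^U$; by replacing $f$ with $\tau_k^N f$ for $N$ large, I may moreover arrange that $\tau_k^U$ annihilates $H^*_H(M\setminus Y)$, using that $C_1$ acts freely on $M\setminus Y$ (so its restriction $\tau_1=\tau_k|_{C_1}$ is nilpotent of bounded order in $H^*_{C_1}(M\setminus Y)\cong H^*((M\setminus Y)/C_1)$) together with Lemma \ref{lemma:nilpotence-2}.

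With $\iota:Y\hookrightarrow M$ and $\beta:=\iota^*\alpha^H\cdot f$, the projection formula (P2) combined with the self-intersection identity $\iota^*\iota^H_*(1)=\e^H(\nu)$ from (P3) gives $\iota^*\iota^H_*(\beta)=\tau_k^U\iota^*\alpha^H$. Hence $\gamma:=\tau_k^U\alpha^H-\iota^H_*(\beta)$ restricts to zero on $Y$; it also restricts to zero on $M\setminus Y$, since the image of $\iota^H_*$ vanishes there by (P3) and $\tau_k^U\alpha^H|_{M\setminus Y}=0$ by the arrangement above. The long exact sequence of the pair $(M,M\setminus Y)$ combined with the Thom isomorphism then yields $\gamma=\iota^H_*(\xi)$ for some $\xi\in H^*_H(Y)$ with $\e^H(\nu)\xi=\iota^*\gamma=0$; multiplying by $f$ gives $\tau_k^U\xi=0$. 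Applying $(\pi_M)^H_*$ and using (P1) to rewrite $(\pi_M)^H_*\iota^H_*=(\pi_Y)^H_*$, I obtain
$$\tau_k^U\langle[M],\alpha\rangle=(\pi_Y)^H_*(\beta)+(\pi_Y)^H_*(\xi),$$
and the last term is $\tau_k^U$-torsion in $H^{2U}(BH)=\ZZ/p^k$, hence zero. Thus $(\pi_Y)^H_*(\beta)=c\tau_k^U$ with $c:=\langle[M],\alpha\rangle$ not divisible by $p^k$.

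The final and most delicate step is to convert this equivariant identity into the required non-equivariant admissibility datum. On $Y$ one has $N'|_Y=N|_Y\oplus\nu$, where $N'$ is the normal bundle of $X^{A'}\subseteq X$; the $A'$-isotypic decomposition refines both the $A$-decomposition of $N|_Y$ and the $C_1$-decomposition of $\nu$, so $\beta$ lies in $\cC_{A'}^H(V_1,\dots,V_r,N')[\tau_k]$. Expanding each $H$-equivariant Chern class $c_l^H(W_\chi)$ appearing in $\beta$ as $\sum_{m=0}^l\binom{r_\chi-l+m}{m}(\chi\tau_k)^m c_{l-m}(W_\chi)$ (as in the proof of Theorem \ref{thm:inverting-euler-class}, applied to the characters by which $C_1\subseteq A'$ acts on each $A'$-isotypic summand) and matching the coefficient of $\tau_k^U$ against $c$, I obtain a non-equivariant class $\alpha'\in\cC_{A'}(V_1,\dots,V_r,N')$ with $\langle[Y],\alpha'\rangle\equiv c\pmod{p^k}$. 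Taking $M'$ to be the union of components of $X^{A'}$ meeting $Y$---a $Z_G(A')$-invariant open-closed subset of $X^{A'}$, since the $G$-invariance of $[h]\in G/A$ makes $A'\vartriangleleft G$ and $Z_G(A')\subseteq Z_G(A)$ preserves the filtration---then yields the required $p^k$-admissibility of $(X,G,A',V_1,\dots,V_r)$. The chief obstacle I foresee is the rigorous handling of this equivariant-to-non-equivariant expansion when $H/C_1$ acts nontrivially on $Y$, which may require iterating the identity of Theorem \ref{thm:inverting-euler-class} compatibly with that action.
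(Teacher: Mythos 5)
Your first two steps (lifting $\alpha$ to an $H$-equivariant class, inverting $\e^H(\nu)$ up to $\tau_k^U$ via Theorem \ref{thm:inverting-euler-class}, and using (P2), (P3) and the long exact sequence of the pair to get $(\pi_Y)^H_*(\beta)=c\,\tau_k^U$ with $c=\la[M],\alpha\ra$) are sound and are essentially the $s=0$ instance of the paper's Lemma \ref{lemma:inductive-step}. The genuine gap is in your final step, and it is not merely the technical issue you flag about the $H/C_1$-action: your fixed choice $i=k-1$ cannot be justified, and the theorem is stated with ``there exists some $0\leq i<k$'' precisely because of this. When you expand $\beta$ as a sum of terms $\tau_k^d g_d$ with $g_d$ a monomial in Chern classes, applying $(\pi_Y)^H_*$ gives $\sum_d \tau_k^d(\pi_Y)^H_*(g_d)$, and only the terms with $\deg g_d=\dim Y$ produce honest integers $(\pi_Y)^H_*(g_d)=\la[Y],g_d\ra\in H^0(BH)=\ZZ$. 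The terms with $\deg g_d>\dim Y$ push forward to torsion classes in $H^{>0}(BH)\simeq\ZZ/p^k$ which, after multiplication by $\tau_k^d$, contribute arbitrary elements of $H^{2U}(BH)=\ZZ/p^k$ that need not be realized by any integer pairing on $Y$. It can therefore happen that every degree-matching term pairs to a multiple of $p^k$ and the non-divisibility of $c$ lives entirely in the higher-degree, purely equivariant contributions; in that case no class $\alpha'\in\cC_{A'}(V_1,\dots,V_r,N')$ with $\la[Y],\alpha'\ra\not\equiv 0\pmod{p^k}$ exists on $Y=M^{\la h^{p^{k-1}}\ra}$, and ``matching the coefficient of $\tau_k^U$'' breaks down.

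The paper's proof handles exactly this obstruction by iterating: it works with the chain $\Gamma_1\subset\Gamma_2\subset\dots\subset\Gamma_k$, $\Gamma_j=\la h^{p^{k-j}}\ra$, and at each stage either the degree of the surviving generic monomial equals the dimension of the current fixed submanifold (case (I) of Lemma \ref{lemma:inductive-step}, at which point one extracts the non-equivariant class via (P4) for $\{1\}\hookrightarrow K_s$ and stops, with $l$ determining $i=k-l$), or the degree is too large and one must descend to the fixed locus of the next $\Gamma_{s+1}$, re-expressing the class in $K_{s+1}$-equivariant terms via the splittings (\ref{eq:splitting-F}) and (\ref{eq:splitting-R}). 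Since $K_k=\{1\}$, the iteration terminates and some admissible $l$ is reached, but it need not be $l=1$. To repair your argument you would have to replace the single localization at $C_1$ by this inductive descent (or prove separately that the degree-matching case always occurs at the first step, which is false in general); you would also need the paper's stratification of $M^{\Gamma_l}$ by the locally constant multi-dimension $\md_l$ to make ``$\deg=\dim$'' meaningful on a disconnected fixed-point set.
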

\begin{proof}
Let $M$ be an open and closed submanifold of $X^A$ such that
$M$ is preserved by the action of $Z_{G}(A)$ and such that
there exists some $\alpha\in\cC_A(V_1,\dots,V_r,N)$ whose pairing
$\la [M],\alpha\ra$ is not divisible by $p^k$. Suppose that
$h\in Z_G(A)$ has the property that $[h]\in G/A$ is
$G$-invariant and $[h]$ has order $p^k$ in $G/A$, so
$h^{p^k}\in A$. Define these subgroups of $G$:
$$\Gamma_0\subset \Gamma_1\subset\dots\subset \Gamma_k\subseteq G,
\qquad \Gamma_j:=\la h^{p^{k-j}}\ra\subseteq G.$$
The action of $\Gamma:=\Gamma_k$ on $X$ preserves $M$. Since
$\Gamma_0\subseteq A$, the action of $\Gamma_0$ on $M$ is
trivial, so the action of $\Gamma$ on $M$ induces an action of
$\Gamma/\Gamma_0\simeq\ZZ/p^k\ZZ$ (which need not be
effective).

To simplify the notation, denote by
$$E_1,\dots, E_u$$
the collection of all vector bundles on $M$ of the form
$V_i^{A,\xi}|_M$ for some $i$ and $\xi\in\wh{A}$, or
$N^{A,\xi}|_M$ for some $\xi\in\wh{A}$, where
$N$ is the normal bundle of the inclusion $X^A\subseteq X$. Then we have
$$\alpha\in\cC(E_1,\dots,E_u).$$

\newcommand{\md}{\operatorname{md}}

For any $x\in M^{\Gamma_j}$ define
$$\md_j(x)=(\dim_{\RR}
T_xM^{\Gamma_0},\dim_{\RR} T_xM^{\Gamma_1},\dots,\dim_{\RR}
T_xM^{\Gamma_j})\in\ZZ^{j+1}$$
("$\md$" stands for multi-dimension), and for any
$\mu\in\ZZ^{j+1}$ let
$$M^{\mu}=\{x\in M^{\Gamma_j}\mid \md_j(x)=\mu\}.$$
Define also for any $i\geq 1$ the vector bundle
$$N_i=\frac{TM^{\Gamma_{i-1}}|_{M^{\Gamma_i}}}{TM^{\Gamma_{i}}}.$$
The bundle $N^i$ inherits from $J$ a structure of complex
vector bundle. The normal bundle $N_{M^{\Gamma_j}\subseteq M}$
of the inclusion $M^{\Gamma_j}\subseteq M$ satisfies
$$N_{M^{\Gamma_j}\subseteq M}\simeq \bigoplus_{i=1}^j N_i|_{M^{\Gamma_j}}.$$
In terms of this notation, we have the following.

\begin{lemma}
\label{lemma:localisation-cyclic} There exists some $1\leq
l\leq k$, some $\mu\in\ZZ^{l+1}$, and some cohomology class
$$\beta\in\cC_{\Gamma_l}(E_1,\dots,E_u,N_1,\dots,N_l)$$
such that $\la[M^{\mu}],\beta\ra$ is not divisible by $p^k$.
In particular $M^{\mu}$, and hence $M^{\Gamma_l}$, is nonempty.
\end{lemma}

The proof of Lemma \ref{lemma:localisation-cyclic} is given in
Subsection \ref{ss:proof-lemma:localisation-cyclic} below. We
now prove how the lemma implies Theorem
\ref{thm:admissibility-hereditary}. Let $l,\mu,\beta$ be the
output of Lemma \ref{lemma:localisation-cyclic}.
We are going to prove that if $A'=\la
A,\Gamma_l\ra=\la A,h^{p^{k-l}}\ra$ then
$(X,G,A',V_1,\dots,V_r)$ is $p^k$-admissible.

Since $A$ is normal in $G$ and the class of $h$ in $G/A$
is $G$-invariant, $A'$ is a normal subgroup of $G$. Since
$M^{\Gamma_l}\neq\emptyset$ and $M\subseteq X^A$, we have
$X^{A'}\neq\emptyset$.
We next observe that $M^{\mu}$ is open and closed in $X^{A'}$.
To prove this it suffices to check that $M^{\mu}$ is open and
closed in $M^{\Gamma_l}$ (since $M$ on its turn is open and
closed in $X^A$), and this follows from the fact that the map
$\md_l:M^{\Gamma_l}\to\ZZ^{l+1}$ is locally constant.

The submanifold $M^{\mu}\subseteq X$ is preserved by the action
of $Z_{G}(A')$. This follows from the next two
observations. First, $M\subset X$ is preserved by
$Z_{G}(A)$ and $Z_{G}(A')\subseteq Z_{G}(A)$,
hence $M^{\Gamma_l}$
is preserved by $Z_{G}(A')$. Second, the fact that $[h]\in
G/A$ is $G$-invariant implies that the map
$\md_l:M^{\Gamma_l}\to\ZZ^{l+1}$ is $Z_{G}(A')$-invariant.

Now let $N$ be the normal bundle of the inclusion
$M^{\mu}\subseteq X$. We claim that
\begin{equation}
\label{eq:beta-dins-cC-A}
\beta\in\cC_{A'}(V_1,\dots,V_r,N).
\end{equation}
Since by Lemma \ref{lemma:localisation-cyclic} we have $\la
[M^{\mu}],\beta\ra\notin p^k\ZZ$, (\ref{eq:beta-dins-cC-A})
implies that $(X,G,A',V_1,\dots,V_r)$ is $p^k$-admissible.
The inclusion (\ref{eq:beta-dins-cC-A}) is a consequence of
$$\cC_{\Gamma_l}(E_1,\dots,E_u,N_1,\dots,N_l)\subseteq \cC_{A'}(V_1,\dots,V_r,N),$$
which follows from the product formula for the Chern class of a
direct sum of vector bundles and these two facts:
\begin{itemize}
\item for any $j$ and $\xi\in\wh{\Gamma_l}$ the vector
    bundle $E_j^{\Gamma_l,\xi}$ is the direct sum of vector
    bundles of the form $V_i^{A',\eta}$, for any $i$ and
    any $\eta\in\wh{A'}$, and of the form $N^{A',\eta}$ for
    any $\eta\in\wh{A'}$; this follows from the definition
    of the bundles $E_1,\dots,E_v$, and the fact that
    $E_j^{\Gamma_l,\xi}\subseteq E_j$ is preserved by the
    action of $A'$ (which on its turn follows from the fact
    that $A$ acts on $E_j$ by homotheties);
\item for any $j\leq l$ and $\xi\in\wh{\Gamma_l}$ the vector
    bundle $N_j^{\Gamma_l,\xi}$ is the direct sum of vector
    bundles of the form $N^{A',\eta}$ for any
    $\eta\in\wh{A'}$; this follows from the fact that
$N_j^{\Gamma_l,\xi}\subseteq N$ is $A'$-invariant (which is a
consequence of the assumption that $[h]\in G/A$ is
$G$-invariant).
\end{itemize}
The proof of Theorem \ref{thm:admissibility-hereditary} is now
complete.
\end{proof}

\subsection{Proof of Lemma \ref{lemma:localisation-cyclic}}
\label{ss:proof-lemma:localisation-cyclic} We first introduce
some notation.

Denote $K_i=\Gamma/\Gamma_i$.

For any group $G$ acting on a space $X$ and every
character $\chi\in\wh{G}$, $\CC_{\chi}$ is the trivial line bundle $\CC_{\chi}=X\times\CC$
endowed with the action of $G$ defined by the formula $\gamma\cdot(x,z)=(\gamma\cdot x,\chi(\gamma)z)$.

Since $\Gamma$ is abelian, for every $j$
the inclusion $\Gamma_j\subset \Gamma$ induces by restriction
a surjective morphism of character groups
$r_j:\wh{\Gamma}\to\wh{\Gamma_j}$.
Choose, for each $j$, a set theoretical section $\psi_j:\wh{\Gamma_j}\to\wh{\Gamma}$ of $r_j$.
We emphasize that we
do not require any compatibility relation between the morphisms $\psi_j$ for different
values of $j$. Define, for every $i,j$ and any $\xi\in\wh{\Gamma_j}$, the following $\Gamma$-bundle
over $M^{\Gamma_j}$:
$$F_i(\Gamma_j,\xi)=E_i^{\Gamma_j,\xi}\otimes\CC_{\psi_j(\xi)^{-1}},$$
where we are using multiplicative notation in $\wh{\Gamma}$.
The effect of tensoring $E^{\Gamma_j,\xi}$ by
$\CC_{\psi_j(\xi)^{-1}}$ is to trivialize the action of
$\Gamma_j$. Hence, the natural action of $\Gamma$ on
$F_i(\Gamma_j,\xi)$ factors through an action of $K_j$, which
lifts the action of $K_j$ on $M^{\Gamma_j}$. Similarly, we
define for every $i\leq j$
\begin{equation}
\label{eq:def-R-i}
R_i(\Gamma_j,\xi)=N_i^{\Gamma_j,\xi}\otimes\CC_{\psi_j(\xi)^{-1}},
\end{equation}
which again can be seen as a $\Gamma$-bundle or a $K_j$-bundle over $M^{\Gamma_j}$.

Let $\rho_j:\wh{\Gamma_j}\to\wh{\Gamma_{j-1}}$ be the restriction map.
Take an arbitrary element $\eta\in\wh{\Gamma_{j}}$ and let $\xi=\rho_j(\eta)$.
Since the characters $\psi_j(\eta)\in\wh{\Gamma_j}$ and
$\psi_{j-1}(\xi)\in\wh{\Gamma_{j-1}}$ extend
$\eta$ and $\xi$ respectively
and the restriction of $\eta$ to $\Gamma_{j-1}$ coincides with
$\xi$, the character $\psi_j(\eta)\psi_{j-1}(\xi)^{-1}$ is trivial
on $\Gamma_{j-1}$. Hence, it induces a character of $K_{j-1}$, which we
denote by $\rho(\eta,\xi)$.
For any $j$ and any $\xi\in\wh{\Gamma_{j-1}}$ we have the following
equalities between $K_{j-1}$-bundles:
\begin{equation}
\label{eq:splitting-F}
F_i(\Gamma_{j-1},\xi)|_{M^{\Gamma_j}}=\bigoplus_{\rho_j(\eta)=\xi}F_i(\Gamma_j,\eta)
\otimes\CC_{\rho(\eta,\xi)},
\end{equation}
and
\begin{equation}
\label{eq:splitting-R}
R_i(\Gamma_{j-1},\xi)|_{M^{\Gamma_j}}=\bigoplus_{\rho_j(\eta)=\xi}R_i(\Gamma_j,\eta)
\otimes\CC_{\rho(\eta,\xi)}.
\end{equation}

After these preliminaries, we proceed to prove Lemma
\ref{lemma:localisation-cyclic}. The main ingredient of the
proof will be an inductive argument, which is the content of
Lemma \ref{lemma:inductive-step} below.

We now prepare the setting to run the inductive process.

Since $\alpha\in\cC(E_1,\dots,E_u)$, we may write $\alpha$ as a
sum of monomials on the Chern classes of the bundles $E_i$, and
the hypothesis of the lemma implies that the pairing of $[M]$
with at least one of these monomials yields an integer that is
not divisible by $p^k$. We then replace $\alpha$ by this
monomial, so we have
$$\alpha=\prod_{l=1}^vc_{j_l}(E_{i_l})$$
and the pairing $\la[M],\alpha\ra$ is not divisible by $p^k$.
Let $\mu_0=\sum_{l=1}^v2j_l$. Since the degree of $\alpha$
is $\mu_0$, we have
$\left\la [M],\alpha\right\ra
=\left\la [M^{\mu_0}],\alpha\right\ra,$
so $\left\la [M^{\mu_0}],\alpha\right\ra$ is not divisible by $p^k$. Using the splitting
$E_i\simeq\bigoplus_{\xi\in\wh{\Gamma_0}}E_i^{\Gamma_0,\xi}$ and the multiplicativity of
the total Chern class with respect to Whitney sum, and expanding, it follows that there exists some
cohomology class
$$\alpha_0=\prod_{l=1}^{v_0}c_{j_{0,l}}\left(E_{i_{0,l}}^{\Gamma_0,\xi_{0,l}}\right)$$
such that
\begin{equation}
\label{eq:primer-no-divisible}
\la [M^{\mu_0}],\alpha_0\ra\qquad\text{is not divisible by $p^k$.}
\end{equation}
Let
$$\gamma_0:=
\prod_lc_{j_{0,l}}^{K_0}\left(F_{i_{0,l}}(\Gamma_0,\xi_{0,l})\right)
\in H^*_{K_0}(M^{\mu_0}).$$
Let $\pi_0:M^{\mu_0}\to \{*\}$ the map to a point. Then
$$(\pi_0)^{K_0}_*(\gamma_0)\in H^0(BK_0)\simeq\ZZ,$$
and it follows from (\ref{eq:primer-no-divisible}) and property
(P4) of \S\ref{s:umkehrungs} applied to the inclusion of groups
$\{1\}\hookrightarrow K_0$ that $\pi^{K_0}_*(\gamma_0)$ is
generic in the sense of Section \ref{s:equivariant-cohomology}.
Now we apply repeatedly the following lemma, beginning at $s=0$
and stopping as soon as we fall in case (I), in which case the
proof of Lemma \ref{lemma:localisation-cyclic} is completed.

\begin{lemma}
\label{lemma:inductive-step}
Let $s\geq 0$. Suppose there is some $\mu=(\mu_0,\dots,\mu_s)\in\ZZ^{s+1}$
and a class
$$\gamma_s=\prod_l c_{j_{s,l}}^{K_s}\left(F_{i_{s,l}}(\Gamma_s,\xi_{s,l})\right)
\prod_l
c_{j_{s,l}'}^{K_s}\left(R_{i_{s,l}'}(\Gamma_s,\xi_{s,l}')\right)
\in H^*_{K_s}(M^{\mu}),$$ where $i_{s,l}'\leq s$ for every $l$
(it should be understood that when $s=0$ no bundle of the type
$R_j$ appears in the formula) such that, letting
$\pi_s:M^{\mu}\to\{*\}$ denote the map to a point, the
cohomology class $(\pi_s)^{K_s}_*(\gamma_s) \in H^*(BK_s)$ is
generic. Then there exists an integer $\mu_{s+1}$ such that,
letting $\nu=(\mu_0,\dots,\mu_s,\mu_{s+1})$, at least one of
the following two properties holds true:
\begin{enumerate}
\item[(I)] there exists some $\beta\in\cC_{\Gamma_{s+1}}(E_1,\dots,E_u,N_1,\dots,N_{s+1})$
such that $\la [M^{\nu}],\beta\ra$ is not divisible by $p^k$, or
\item[(II)] denoting by $\pi_{s+1}:M^{\nu}\to\{*\}$ the map to a point, there exists a
class
$$\gamma_{s+1}=\prod_l c_{j_{s+1,l}}^{K_{s+1}}\left(F_{i_{s+1,l}}(\Gamma_{s+1},\xi_{s+1,l})\right)
\prod_l
c_{j_{s+1,l}'}^{K_{s+1}}\left(R_{i_{s+1,l}'}(\Gamma_{s+1},\xi_{s+1,l}')\right)$$
in $H^*_{K_{s+1}}(M^{\nu})$ such that
$(\pi_{s+1})^{K_{s+1}}_*(\gamma_{s+1})$ is generic and
$i_{s+1,l}'\leq s+1$ for every $l$.
\end{enumerate}
\end{lemma}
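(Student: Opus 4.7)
The plan is to apply Theorem~\ref{thm:inverting-euler-class} to the normal bundle of $Y:=M^{\mu,\Gamma_{s+1}}$ in $M^\mu$, use an Atiyah--Bott type localization to express the pushforward of $\gamma_s$ in terms of pushforwards from the components $M^\nu$ of $Y$, and then expand each such term in characters of $\Gamma_{s+1}$ and read off case (I) or (II) via a dichotomy on the pushforward's degree-zero coefficient.

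Set $H':=\Gamma_{s+1}/\Gamma_s\subseteq K_s$ and $\iota:Y\hookrightarrow M^\mu$. The bundle $N_{s+1}|_Y$ is complex (via $J$) and $K_s$-equivariant with $N_{s+1}^{H'}=0$, so Theorem~\ref{thm:inverting-euler-class} yields $f\in\cC^{K_s}_{H'}(N_{s+1})[\tau_{K_s}]$ and $U\geq 1$ with $\e^{K_s}(N_{s+1})\,f=\tau_{K_s}^U$. Setting $\delta:=\tau_{K_s}^U\gamma_s-\iota^{K_s}_*(f\iota^*\gamma_s)$, property (P3) gives $\iota^*\delta=0$.

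The main obstacle is to show that $\tau_{K_s}^t\delta=0$ for some $t\geq 0$. The key observation is that on $V:=M^\mu\setminus Y$ the group $K_s$ acts freely: $H'$ acts freely on $V$ by definition of $Y$, and every nontrivial subgroup of the cyclic $p$-group $K_s$ contains its unique minimal nontrivial subgroup $H'$, so no nontrivial element of $K_s$ fixes any point of $V$. Hence $V_{K_s}\simeq V/K_s$ is a finite-dimensional manifold, so $H^*_{K_s}(V)$ is annihilated by a large power of $\tau_{K_s}$. Inverting $\tau_{K_s}$ in the Thom/long exact sequence $H^{*-r}_{K_s}(Y)\xrightarrow{\iota^{K_s}_*} H^*_{K_s}(M^\mu)\to H^*_{K_s}(V)$ makes $\iota^{K_s}_*$ surjective, and combining this with the invertibility of $\e^{K_s}(N_{s+1})$ from Theorem~\ref{thm:inverting-euler-class} shows that $\iota^*$ is an isomorphism after inverting $\tau_{K_s}$. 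In particular $\ker\iota^*$ is $\tau_{K_s}$-torsion, so $\tau_{K_s}^t\delta=0$ for some $t$, and hence $\tau_{K_s}^t(\pi_s)^{K_s}_*\delta=0$ by (P1).

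Applying $(\pi_s)^{K_s}_*$ to the defining identity for $\delta$ and splitting $Y=\bigsqcup_\nu M^\nu$, then multiplying by $\tau_{K_s}^t$, the $\delta$-term disappears while $\tau_{K_s}^{t+U}(\pi_s)^{K_s}_*\gamma_s$ remains generic, so for some $\nu=(\mu,\mu_{s+1})$ the summand $q^{K_s}_{\nu,*}(f|_{M^\nu}\iota^*\gamma_s|_{M^\nu})$ is generic; this determines $\mu_{s+1}$. Expanding this summand via \eqref{eq:splitting-F}, \eqref{eq:splitting-R} and the analogous splitting $N_{s+1}^{H',\chi}=R_{s+1}(\Gamma_{s+1},\tilde\chi)\otimes\CC_{\psi_{s+1}(\tilde\chi)}$ (with $\tilde\chi\in\wh{\Gamma_{s+1}}$ the lift of $\chi$ trivial on $\Gamma_s$), together with the tensor-product formula for Chern classes, the fact that $c_1^{K_s}$ of the auxiliary lines $\CC_{\rho(\eta,\xi)}$ and $\CC_{\psi_{s+1}(\tilde\chi)}$ are integer multiples of $\tau_{K_s}$ (as those characters descend to $K_s$), the identity $c^{K_s}_j(W)=\sigma^*_M c^{K_{s+1}}_j(W)$ for $K_{s+1}$-equivariant bundles $W$ (where $\sigma:K_s\twoheadrightarrow K_{s+1}$), and properties (P2) and (P4), we rewrite
$$q^{K_s}_{\nu,*}\bigl(f|_{M^\nu}\iota^*\gamma_s|_{M^\nu}\bigr)=\sum_{a\ge 0}\tau_{K_s}^a\,\sigma^*(\beta_a),$$
where each $\beta_a\in H^*(BK_{s+1})$ is a $K_{s+1}$-equivariant pushforward of a product of $c^{K_{s+1}}_j$'s of the bundles $F_i(\Gamma_{s+1},\eta)$ ($i\le u$) and $R_i(\Gamma_{s+1},\eta)$ ($i\le s+1$). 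Some summand is generic, so $\sigma^*(\beta_a)$ is generic by property (2) of \S\ref{ss:generic-classes}. If $\beta_a$ is itself generic in $H^*(BK_{s+1})$, the corresponding $K_{s+1}$-equivariant product is the class $\gamma_{s+1}$ for case (II). Otherwise, since non-generic classes of positive degree vanish (which would contradict the genericity of $\sigma^*(\beta_a)$), we have $\beta_a\in H^0(BK_{s+1})=\ZZ$ with $p^{k-s-1}\mid\beta_a$ and $p^{k-s}\nmid\beta_a$, so $v_p(\beta_a)=k-s-1<k$; then the non-equivariant part $\alpha_a|_{G=1}$ lies in $\cC_{\Gamma_{s+1}}(E_1,\dots,E_u,N_1,\dots,N_{s+1})$ and $\langle[M^\nu],\alpha_a|_{G=1}\rangle=\beta_a$ is not divisible by $p^k$, giving case (I).
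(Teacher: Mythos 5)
Your proof is correct and follows essentially the same route as the paper: Theorem \ref{thm:inverting-euler-class} applied to $N_{s+1}$, freeness of the $K_s$-action off the $\Gamma_{s+1}$-fixed locus, the pushforward properties (P1)--(P4), the splittings \eqref{eq:splitting-F}--\eqref{eq:splitting-R}, and the final dichotomy on the degree of the surviving monomial. The only difference is cosmetic packaging: you localize globally on $M^{\mu}$ first (showing $\ker\iota^*$ is $\tau$-torsion) and then split the fixed set into the pieces $M^{\nu}$, whereas the paper first isolates the components meeting the fixed locus in the right dimension and then lifts $\tau^r\gamma_s$ through the long exact sequence of the pair; these are equivalent formulations of the same localization argument.
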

\begin{proof}
Since $(\pi_s)^{K_s}_*(\gamma_s)\in H^*(BK_s)$ is generic,
Lemma \ref{lemma:free-action-roots-unity} implies that the
action of $K_s$ on $M^{\mu}$ is not free, so there is some
subgroup $K'\subseteq K_s$ such that $K'\neq\{1\}$ and
$(M^{\mu})^{K'}\neq\emptyset$. Since any subgroup $K'\subseteq
K_s$ different from $\{1\}$ contains the projection of
$\Gamma_{s+1}$ to $K_s=\Gamma/\Gamma_s$, it follows that
$(M^{\mu})^{\Gamma_{s+1}}\neq\emptyset$. For any $n\in
\ZZ_{\geq 0}$ let
$$M^{\mu}_n=\bigcup_{X\subseteq M^{\mu}\text{ connected component}\atop
\dim_{\RR} X=n,\,X\cap (M^{\mu})^{\Gamma_{s+1}}\neq\emptyset}X,$$
and let
$$M^{\mu}_{-1}=M^{\mu}\setminus\bigcup_{n\geq 0}M^{\mu}_n.$$
Note that for each $k\geq-1$ the subset $M^{\mu}_k\subseteq
M^{\mu}$ is $K_s$-invariant. Since the action of $K_s$ on
$M^{\mu}_{-1}$ is free,
$(\pi_s)^{K_s}_*(\gamma_s|_{M^{\mu}_{-1}})$ is not generic.
Since
$$\gamma_s|_{M^{\mu}}=\sum_{k\geq -1}\gamma_s|_{M^{\mu}_k}$$
(where $\gamma_s|_{M^{\mu}_k}\in H^*_{K_s}(M^{\mu})$ is meant
to be zero on $M^{\mu}\setminus M^{\mu}_k$), it follows that
there is some $\mu_{s+1}\in\ZZ_{\geq 0}$ such that
$(\pi_s)^{K_s}_*(\gamma_s|_{M^{\mu}_{\mu_{s+1}}})$ is generic.
Let $\nu=(\mu_0,\dots,\mu_s,\mu_{s+1})$, let
$\iota_s:M^{\nu}\to M^{\mu}$ be the inclusion and let
$\pi_{s+1}:M^{\nu}\to\{*\}$ be the projection to a point. Since
$M^{\nu}=M^{\mu}_{\mu_{s+1}}\cap M^{\Gamma_{s+1}}$, the action
of $K_s$ on
$$(M^{\mu}_{\mu_{s+1}})^*=M^{\mu}_{\mu_{s+1}}\setminus M^{\nu}$$
is free. Let $\theta:K_s\to\CC^*$ be an injective morphism and let
$$\tau=c_1^{K_s}(EK_s\times_{\theta}\CC)\in H^2_{K_s}(BK_s).$$
We also denote by $\tau$ the pullback of this cohomology class to $H^*_{K_s}(M^{\mu})$ via
the morphism induced by the projection $\pi_s$. Since the action of $K_s$ on
$(M^{\mu}_{\mu_{s+1}})^*$ is free, $H^v_{K_s}((M^{\mu}_{\mu_{s+1}})^*)$ vanishes for
big enough $v$. Hence there exists some $r$ such that
$$\tau^r\gamma_s|_{(M^{\mu}_{\mu_{s+1}})^*}=0.$$
The long exact sequence in equivariant cohomology for the pair $(M^{\mu}_{\mu_{s+1}},(M^{\mu}_{\mu_{s+1}})^*)$ implies that
$\tau^r\gamma_s$ belongs to the image of the map
$$H^*_{K_s}(M^{\mu}_{\mu_{s+1}},(M^{\mu}_{\mu_{s+1}})^*)\to H^*_{K_s}((M^{\mu}_{\mu_{s+1}})^*).$$
By (P3) in \S\ref{s:umkehrungs} there exists some $\lambda_s\in
H^*_{K_s}(M^{\nu})$ such that
$\tau^r\gamma_s=(\iota_s)^{K_s}_*(\lambda_s)$ so
$$\iota_s^*(\tau^r\gamma_s)=\lambda_s\e^{K_s}(N_{s+1}|_{M^{\nu}}).$$
According to Theorem \ref{thm:inverting-euler-class} there exists
$$f\in\cC^{K_s}(\{N_{s+1}^{\Gamma_{s+1},\chi}\}_{\chi\in\wh{\Gamma_{s+1}}})[\tau]\subseteq H^*_{K_s}(M^{\nu})$$
and some nonnegative integer $U$ such that
$$\lambda_s\e^{K_s}(N_{s+1}|_{M^{\nu}})f=\tau^U\lambda_s.$$
Then
\begin{align*}
(\pi_{s+1})^{K_s}_*(\iota_s^*(\tau^r\gamma_s)f) &= (\pi_{s+1})^{K_s}_*(\tau^U\lambda_s) \\
&=\tau^U(\pi_{s+1})^{K_s}_*(\lambda_s) \qquad\text{by the product formula, (P2) in \S\ref{s:umkehrungs}} \\
&=\tau^U(\pi_{s}\circ\iota_s)^{K_s}_*(\lambda_s) \qquad \text{since $\pi_{s+1}=\pi_s\circ\iota_s$}\\
&=\tau^U(\pi_{s})^{K_s}_*(\tau^r\gamma_s) \\
&=\tau^{U+r}(\pi_{s})^{K_s}_*(\gamma_s),
\end{align*}
so $(\pi_{s+1})^{K_s}_*(\iota_s^*(\tau^r\gamma_s)f)$ is
generic. By (\ref{eq:def-R-i}),
$$\cC^{K_s}(\{N_{s+1}^{\Gamma_{s+1},\chi}\}_{\chi\in\wh{\Gamma_{s+1}}})[\tau]
=
\cC^{K_s}(\{R_{s+1}(\Gamma_{s+1},\chi)\}_{\chi\in\wh{\Gamma_{s+1}}})[\tau],$$
so we may write $$f=\tau^{e_1}f_1+\tau^{e_2}f_2+\dots,$$
where each $f_i$ is a monomial on the $K_s$-equivariant
Chern classes of the bundles
$$\{R_{s+1}(\Gamma_{s+1},\chi)\}_{\chi\in\wh{\Gamma_{s+1}}}.$$
Since $(\pi_{s+1})^{K_s}_*(\iota_s^*(\tau^r\gamma_s)f)$ is
generic, by the product formula and the basic properties of
generic elements in $H^*(BK_s)$, there exists some $w$ such
that $(\pi_{s+1})^{K_s}_*(\iota_s^*(\gamma_s)f_w)$ is generic.
We may write
$$\iota_s^*(\gamma_s)f_w=
\prod_l c_{j_{s,l}}^{K_s}\left(F_{i_{s,l}}(\Gamma_s,\xi_{s,l})\right)
\prod_l c_{j_{s,l}'}^{K_s}\left(R_{i_{s,l}'}(\Gamma_s,\xi_{s,l}')\right)
\prod_l c_{j_{s,l}''}^{K_s}\left(R_{s+1}(\Gamma_s,\xi_{s,l}'')\right),$$
where the factors in the first two products are implicitly restricted to $M^{\nu}$.

By formulas (\ref{eq:splitting-F}) and (\ref{eq:splitting-R}) we have
$$c_j^{K_s}(F_i(\Gamma_s,\xi))|_{M^{\nu}}\in\cC^{K_s}(\{F_i(\Gamma_{s+1},\chi)\}_{\chi\in\wh{\Gamma_{s+1}}})[\tau]$$
for every $j,i,\xi$ and similarly
$$c_j^{K_s}(R_i(\Gamma_s,\xi))|_{M^{\nu}}\in\cC^{K_s}(\{R_i(\Gamma_{s+1},\chi)\}_{\chi\in\wh{\Gamma_{s+1}}})[\tau]$$
for every $j,i\leq s+1,\xi$. Consider the following collection of vector bundles
over $M^{\nu}$:
$$\bB=\{F_i(\Gamma_{s+1},\chi)\}_{i,\chi\in\wh{\Gamma_{s+1}}}\cup
\{R_i(\Gamma_{s+1},\chi)\}_{i\leq s+1,\chi\in\wh{\Gamma_{s+1}}}.$$
We have
$$\iota_s^*(\gamma_s)f_w\in
\cC^{K_s}(\bB)[\tau],$$
so we may write
$$\iota_s^*(\gamma_s)f_w=\tau^{d_1}g_1+\tau^{d_2}g_2+\dots$$
where each $g_i$ is a monomial on the $K_s$-equivariant Chern
classes of the bundles in $\bB$. For at least one $z$, the class
$(\pi_{s+1})^{K_s}_*(g_z)$ is generic. Write
$$g_z=\prod_l c_{j_{s+1,l}}^{K_{s}}\left(F_{i_{s+1,l}}(\Gamma_{s+1},\xi_{s+1,l})\right)
\prod_l c_{j_{s+1,l}'}^{K_{s}}\left(R_{i_{s+1,l}'}(\Gamma_{s+1},\xi_{s+1,l}')\right)
\in H^*_{K_{s}}(M^{\nu}).$$

Now we distinguish two cases.

Suppose first that $\deg g_z$
is equal to $\dim M^{\nu}$. In that case we define
$$\beta:=
\prod_l
c_{j_{s+1,l}}\left(E_{i_{s+1,l}}^{\Gamma_{s+1},\xi_{s+1,l}}\right)
\prod_l
c_{j_{s+1,l}'}\left(N_{i_{s+1,l}'}^{\Gamma_{s+1},\xi_{s+1,l}'}\right)$$
and by property (P4) of \S\ref{s:umkehrungs} applied to the
inclusion of groups $\{1\}\hookrightarrow K_s$ we may identify
$$(\pi_{s+1})^{K_s}_*(g_z)\in H^0(BK_s)\simeq\ZZ$$
with the paring $\la [M^{\nu}],\beta\ra$. Since the former is generic,
it follows that $\la [M^{\nu}],\beta\ra$ is not divisible by $p^{k-s}=|K_s|$, so
a fortiori it is not divisible by $p^k$. Hence we fall in case (I) of the statement of
the lemma.

Now suppose that $\deg g_z>\dim M^{\nu}$ and define
$$\gamma_{s+1}=\prod_l c_{j_{s+1,l}}^{K_{s+1}}\left(F_{i_{s+1,l}}(\Gamma_{s+1},\xi_{s+1,l})\right)
\prod_l
c_{j_{s+1,l}'}^{K_{s+1}}\left(R_{i_{s+1,l}'}(\Gamma_{s+1},\xi_{s+1,l}')\right)
\in H^*_{K_{s+1}}(M^{\nu}).$$ Let $\rho:K_s\to K_{s+1}$ be the
natural projection map, and let
$\rho^*_M:H^*_{K_{s+1}}(M^{\nu})\to H^*_{K_{s}}(M^{\nu})$,
$\rho^*:H^*(BK_{s+1})\to H^*(BK_{s})$ be the maps induced by
$\rho$ (for the first map, recall that the action of $K_s$ on
$M^{\nu}$ factors through $\rho$). Then we have
$\rho_M^*\gamma_{s+1}=g_z$. By (P4) in \S\ref{s:umkehrungs},
$$\rho^*((\pi_{s+1})^{K_{s+1}}_*(\gamma_{s+1}))=(\pi_{s+1})^{K_{s}}_*(g_z).$$
Since $(\pi_{s+1})^{K_{s}}_*(g_z)\in H^*(BK_s)$ is generic, in
particular it is nonzero, which implies that
$(\pi_{s+1})^{K_{s+1}}_*(\gamma_{s+1})$ is nonzero. On the
other hand,
$$(\pi_{s+1})^{K_{s+1}}_*(\gamma_{s+1})\in H^{>0}(BK_{s+1})$$
because $\deg\gamma_{s+1}=\deg g_z>M^{\nu}$. Since a class in
$H^{>0}(BK_{s+1})$ is generic if and only if it is nonzero, it
follows that $(\pi_{s+1})^{K_{s+1}}_*(\gamma_{s+1})$ is
generic. So we fall in case (II) of the lemma, and the proof is
complete.
\end{proof}

\section{Finite $p$-groups}
\label{s:finite-p-groups}

This section contains several results on finite $p$-groups that
will be used in the proof of Theorem
\ref{thm:main-fixed-point}. Before going to the results, we
introduce some notation.

For any finite group $G$ we define
$$\alpha(G):=\min\{[G:A]\mid A\subseteq G\text{ abelian
subgroup}\}.$$ We denote
by $e(G)$ the exponent of $G$ (the least
common multiple of the orders of its elements).
For a finite abelian group $A$ we denote by $\rk(A)$ the
minimal number of elements needed to generate $A$. The rank of
a finite group $G$, denoted $\rk(G)$, is by definition the
maximum of $\rk(A)$ as $A$ runs over the set of all abelian
subgroups of $G$. If $p$ is a prime number and $G$ is a finite
$p$-group then $\rk(G)$ coincides with the biggest
$r$ such that $G$ contains a subgroup isomorphic to
$(\ZZ/p)^r$.

Throughout this section we use additive notation for finite
abelian groups, and multiplicative notation for not necessarily
abelian groups (such as automorphism groups of
abelian groups).

\subsection{Abelian subgroups of finite $p$-groups of bounded rank}

The main result of this subsection is the following theorem.

\begin{theorem}
\label{thm:cyclic-by-abelian} Let $a,r$ be natural numbers and
let $p$ be a prime. Suppose that $G$ is a finite $p$-group
satisfying $\rk(G)\leq r$ and $\alpha(G)>a^{18 r^5}$. Then
there exist subgroups $B\unlhd \Gamma\subseteq G$ such that $B$
is abelian, $\Gamma/B$ is cyclic, and $\alpha(\Gamma)>a$.
\end{theorem}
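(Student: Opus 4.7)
I would argue by contradiction. Suppose no pair $(B,\Gamma)$ as in the conclusion exists, so every pair of subgroups $B \unlhd \Gamma \subseteq G$ with $B$ abelian and $\Gamma/B$ cyclic satisfies $\alpha(\Gamma) \leq a$. From this assumption I will produce an abelian subgroup of $G$ of index at most $a^{18r^5}$, contradicting the hypothesis $\alpha(G) > a^{18r^5}$.

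The core of the argument is an iterative enlargement of abelian subgroups. I build inductively an ascending chain of abelian subgroups
$$1 = A_0 \subsetneq A_1 \subsetneq \cdots \subsetneq A_N \subseteq G.$$
At step $k$, given $A_{k-1}$ abelian and proper in $G$, the normaliser condition in finite $p$-groups yields $N_G(A_{k-1}) \supsetneq A_{k-1}$; I pick $g_k \in N_G(A_{k-1}) \setminus A_{k-1}$ whose image in the quotient has order $p$, and set $\Gamma_k := \langle A_{k-1}, g_k \rangle$. Then $A_{k-1}$ is abelian and normal in $\Gamma_k$ with $\Gamma_k/A_{k-1}$ cyclic of order $p$, so the contradiction hypothesis produces an abelian subgroup of $\Gamma_k$ of index at most $a$. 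After intersecting that subgroup with $C_{\Gamma_k}(A_{k-1})$ and passing to a normal core, I extract an abelian $A_k$ with $A_{k-1} \subsetneq A_k \leq \Gamma_k$ and $[\Gamma_k : A_k]$ bounded by a polynomial function of $a$ alone.

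The key quantitative point is that the chain must terminate after at most $N = N(r)$ steps, where $N(r)$ is polynomial in $r$. Indeed, since $\rk(G)\leq r$, each abelian $p$-group $A_k$ has at most $r$ invariant factors, so the sequence of invariant-factor types of the $A_k$'s can change in only boundedly many ways before stabilising. At each step the multiplicative contribution to the final index $[G:A_N]$ is $[N_G(A_{k-1}):\Gamma_k] \cdot [\Gamma_k:A_k]$; the first factor is controlled by the structure of $N_G(A_{k-1})/A_{k-1}$ as a $p$-subgroup of $\Aut(A_{k-1})$, whose Sylow $p$-subgroups have rank $O(r^2)$ and whose elementary-divisor data is similarly polynomially bounded in $r$. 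Carefully accumulating these polynomial-in-$r$ bounds along the chain gives $[G:A_N] \leq a^{c(r)}$ with $c(r) \leq 18r^5$, which is the required contradiction.

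The main obstacle is the extraction, at each stage, of an abelian $A_k$ that \emph{properly} contains $A_{k-1}$: the abelian subgroup of $\Gamma_k$ of index $\leq a$ supplied by the contradiction hypothesis need not contain $A_{k-1}$, so a centraliser-intersection and core-extraction step is required, and controlling the loss in index there is delicate. The explicit exponent $18r^5$ is exactly what is needed so that the polynomial-in-$r$ losses at each step, compounded over the polynomial-in-$r$ chain length, still leave room to produce the final contradiction; any sloppier bookkeeping in the $\Aut(A_{k-1})$ analysis or in the normal-core passage would blow up the exponent above $18r^5$.
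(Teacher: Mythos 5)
There is a genuine gap, and it is fatal to the whole strategy. At each step of your iteration you form $\Gamma_k=\la A_{k-1},g_k\ra$ with $[\Gamma_k:A_{k-1}]=p$, and you then invoke the contradiction hypothesis to get an abelian subgroup of $\Gamma_k$ of index at most $a$. But $A_{k-1}$ is itself an abelian subgroup of $\Gamma_k$ of index $p$, so whenever $p\leq a$ (the only nontrivial case) the inequality $\alpha(\Gamma_k)\leq a$ is automatically satisfied and the contradiction hypothesis yields no information whatsoever; in particular it gives you no abelian subgroup of $\Gamma_k$ strictly larger than $A_{k-1}$, and the enlargement step cannot get started. The point you are missing is that to extract anything from the hypothesis one must produce a pair $B\unlhd\Gamma$ with $\Gamma/B$ cyclic of \emph{large} order, and one needs a mechanism converting ``$|\Gamma/B|$ is large'' into ``$\alpha(\Gamma)$ is large''. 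The paper does exactly this, and directly rather than by contradiction: take $B$ a \emph{maximal normal abelian} subgroup, so that $G/B$ embeds in $\Aut(B)$ by conjugation; by the Gorchakov--Hall--Merzlyakov--Roseblade lemma $\rk(G/B)\leq r(5r-1)/2$, so the rank--exponent bound $|H|\leq p^{2d\rk(H)^2}$ forces $G/B$ to contain an element $\gamma$ of order $>a^r$; setting $\Gamma=\pi^{-1}(\la\gamma\ra)$, a lemma of Jaikin (Lemma \ref{lemma:alpha-nilpotent-2}) gives $\alpha(\Gamma)\geq[\Gamma:B]^{1/r}>a$, using crucially that the conjugation action of $\Gamma/B$ on $B$ is faithful. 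Nothing in your outline plays the role of this last step.

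Two secondary problems would remain even if the enlargement step worked. First, your claim that the chain terminates after $N(r)$ steps with $N(r)$ polynomial in $r$ is false: in a cyclic group of order $p^n$ (rank $1$) the chain $\ZZ/p\subsetneq\ZZ/p^2\subsetneq\cdots$ has length $n$, unbounded in terms of $r$, so the ``invariant-factor types stabilise'' argument does not hold. Second, the index accounting does not close up: $[G:A_N]$ factors through $[G:N_G(A_{N-1})]$, which your per-step bookkeeping of $[N_G(A_{k-1}):\Gamma_k]\cdot[\Gamma_k:A_k]$ never controls. The exponent $18r^5$ in the statement is not the residue of a delicate compounding of per-step losses; it comes in one shot from combining $\rk(G/B)\leq 3r^2$ with the exponent bound and the $1/r$ power loss in Jaikin's lemma.
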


In the proof of theorem we will need the following four lemmas,
the first three of which are well known. The proof of Theorem
\ref{thm:cyclic-by-abelian} is given after Lemma
\ref{lemma:alpha-nilpotent-2}.

In what follows, $p$ denotes an arbitrary prime.
The first lemma is well known (see e.g. \cite[\S 5.2.3]{Rob}) and will be used
several times.

\begin{lemma}
\label{lemma:maximal-normal-abelian} Let $B$ be a maximal abelian normal subgroup
of a finite $p$-group $G$. Then the action by conjugation on $B$ identifies $G/B$ with a
subgroup of $\Aut(B)$.
\end{lemma}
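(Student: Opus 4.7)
The plan is to consider the conjugation action $\phi \colon G \to \Aut(B)$ defined by $\phi(g)(b) = gbg^{-1}$, which is well defined because $B$ is normal in $G$. The kernel of $\phi$ is by definition the centralizer $Z_G(B)$, and since $B$ is abelian we have $B \subseteq Z_G(B)$. Hence the content of the lemma reduces to proving the reverse inclusion $Z_G(B) \subseteq B$, which will give $\ker\phi = B$ and the desired embedding $G/B \hookrightarrow \Aut(B)$.

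I would argue this by contradiction, assuming $Z_G(B) \supsetneq B$. First observe that $Z_G(B)$ is normal in $G$: since $B \unlhd G$, for any $g \in G$ and $z \in Z_G(B)$ the element $gzg^{-1}$ still centralizes $B$ (as conjugation by $g$ permutes $B$). Consequently, $Z_G(B)/B$ is a nontrivial normal subgroup of the finite $p$-group $G/B$.

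The key step is then the standard fact that in a finite $p$-group, every nontrivial normal subgroup meets the center nontrivially. Applying this to $Z_G(B)/B \vartriangleleft G/B$, we obtain a nontrivial element $\bar{x} \in (Z_G(B)/B) \cap Z(G/B)$. Pick a lift $x \in Z_G(B)$ of $\bar{x}$ and set $A := \langle B, x\rangle$. Then:
\begin{itemize}
\item $A$ is abelian, because $x \in Z_G(B)$ centralizes $B$ and $B$ itself is abelian;
\item $A$ strictly contains $B$, since $\bar{x} \neq 1$ in $G/B$;
\item $A$ is normal in $G$, since its image $A/B = \langle \bar{x}\rangle$ in $G/B$ lies in $Z(G/B)$ and hence is normal there, so its preimage $A$ is normal in $G$.
\end{itemize}

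This contradicts the maximality of $B$ among abelian normal subgroups of $G$, so $Z_G(B) = B$ and the lemma follows. I do not anticipate any real obstacle; the only ingredient beyond bookkeeping is the standard center-meets-normal-subgroup property of finite $p$-groups, which is a direct consequence of the class equation applied to the conjugation action of $G$ on the nontrivial normal subgroup in question.
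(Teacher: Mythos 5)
Your proof is correct and complete. The paper does not actually prove this lemma --- it is stated as a well-known fact with a reference to \cite[\S 5.2.3]{Rob} --- and your argument is exactly the standard one: reduce to showing $Z_G(B)=B$, and if not, use the fact that the nontrivial normal subgroup $Z_G(B)/B$ of the $p$-group $G/B$ meets the center of $G/B$, producing a strictly larger abelian normal subgroup $\la B,x\ra$ and contradicting maximality.
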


The following lemma gives an upper bound on the size of a
finite $p$-group in terms of the rank and the exponent. Such
bounds have been much studied in the literature. We give a
simple argument for completeness, but much better bounds can be
obtained using more elaborated arguments (see for example
\cite{S}).

\begin{lemma}
\label{lemma:rank-exponent} If $G$ is a finite $p$-group
satisfying $\rk(G)=r$ and $e(G)=p^d$ then $|G|\leq p^{2dr^2}$.
\end{lemma}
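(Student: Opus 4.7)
The plan is to use Lemma \ref{lemma:maximal-normal-abelian} to reduce the bound on $|G|$ to bounding $|B|$ and $|\Aut(B)|$, where $B$ is a maximal normal abelian subgroup of $G$.

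First I would choose a maximal normal abelian subgroup $B \unlhd G$. Since $B$ is abelian, any $p$-generating set of $B$ has size at least $\rk(B/pB)$, which equals the largest $s$ such that $(\ZZ/p)^s$ embeds in $B$; hence $B$ is generated by at most $r$ elements. Combined with $e(B) \leq e(G) = p^d$, the structure theorem for finite abelian $p$-groups gives $B \simeq \bigoplus_{i=1}^s \ZZ/p^{a_i}$ with $s \leq r$ and $a_i \leq d$, so $|B| \leq p^{rd}$.

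Next I would bound $|\Aut(B)|$ by a trivial counting argument: a homomorphism $B \to B$ is determined by the images of a generating set of size $\leq r$, and each image lies in $B$, so $|\Hom(B,B)| \leq |B|^r$ and therefore $|\Aut(B)| \leq |B|^r \leq p^{r^2 d}$.

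Finally, Lemma \ref{lemma:maximal-normal-abelian} yields an injection $G/B \hookrightarrow \Aut(B)$, so $|G/B| \leq p^{r^2 d}$ and
\[
|G| = |B| \cdot |G/B| \leq p^{rd} \cdot p^{r^2 d} = p^{(r^2+r)d} \leq p^{2r^2 d},
\]
where the last inequality holds since $r \geq 1$ (the case $r = 0$ being trivial as $G$ is then trivial). There is no real obstacle here; the only subtlety is remembering that rank in the sense defined in this section coincides with minimal number of generators for abelian $p$-groups, which justifies using $r$ both as the rank of $G$ and as a bound on the number of generators of $B$.
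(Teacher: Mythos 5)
Your proof is correct and follows the same skeleton as the paper's: pass to a maximal normal abelian subgroup $B$, bound $|B|\leq p^{rd}$ using $\rk(B)\leq r$ and $e(B)\leq p^d$, and embed $G/B$ into $\Aut(B)$ via Lemma \ref{lemma:maximal-normal-abelian}. Where you diverge is in bounding $|G/B|$: the paper invokes a theorem of Hall, according to which $|\Aut(B)|$ divides $p^{r^2(d-1)}|\GL_r(\FF_p)|$, and then extracts the order of a Sylow $p$-subgroup to get $|G/B|\leq p^{r(2rd-r-1)/2}$. You instead use the completely elementary count $|\Aut(B)|\leq|\Hom(B,B)|\leq|B|^r\leq p^{r^2d}$, which bounds the full automorphism group rather than just its $p$-part. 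Your intermediate estimate is slightly weaker (it gives up a factor of roughly $p^{r(r+1)/2}$ and does not exploit that $G/B$ is a $p$-group), but it is self-contained and still yields $|G|\leq p^{(r^2+r)d}\leq p^{2dr^2}$ with room to spare. The trade-off is sharpness versus avoiding an external citation; for the stated inequality either route works, and your handling of the degenerate case $r=0$ and of the identification of $\rk(B)$ with the minimal number of generators is correct.
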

\begin{proof}
Let $B\subseteq G$ be a maximal abelian normal subgroup. Since
$\rk(B)\leq r$ and $e(B)\leq p^d$, we have $|B|\leq p^{rd}$. A
theorem of Hall (see e.g. \cite[\S 5.3.3]{Rob})
implies that the order of $\Aut(B)$ divides
$p^{r^2(d-1)}|\GL_r(\FF_p)|=p^{r^2(d-1)}(p^r-1)(p^r-p)\dots
(p^r-p^{r-1})$. Hence, any Sylow $p$-subgroup $S\subseteq
\Aut(B)$ satisfies
$$|S|\leq p^{r^2(d-1)}p^{1+2+\dots+(r-1)}=
p^{r^2(d-1)}p^{\frac{r(r-1)}{2}}=p^{\frac{r(2rd-r-1)}{2}}.$$ By
Lemma \ref{lemma:maximal-normal-abelian} we can identify $G/B$
with a $p$-subgroup of $\Aut(B)$. Consequently,
$$|G|=|B|\cdot|G/B|\leq p^{rd}p^{\frac{r(2rd-r-1)}{2}}\leq p^{2dr^2}.$$
\end{proof}

The following is Gorchakov--Hall--Merzlyakov--Roseblade's
lemma (see \cite[Lemma 5]{Ro}).

\begin{lemma}
\label{lemma:GHMR} Let $B$ be a finite abelian $p$-group and
let $r=\rk(B)$. We have $$\rk(\Aut(B))\leq r(5r-1)/2.$$
\end{lemma}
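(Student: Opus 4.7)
The plan is to reduce to bounding $\rk(A)$ for an arbitrary abelian subgroup $A \subseteq \Aut(B)$, and then to exploit the reduction-modulo-$p$ map together with a filtration on $B$. Consider the natural surjection $\pi\colon \Aut(B) \to \Aut(B/pB) = \GL_r(\FF_p)$ and let $K = \Ker\pi$. Because each $\phi \in K$ satisfies $(\phi - \id)(B) \subseteq pB$, the endomorphism $\phi - \id$ is nilpotent (of nilpotency index at most $c$, where $p^c$ is the exponent of $B$), so $\phi$ has $p$-power order and $K$ is a finite $p$-group. From the short exact sequence $1 \to A\cap K \to A \to \pi(A) \to 1$ of abelian groups one obtains
$$\rk(A) \leq \rk(A\cap K) + \rk(\pi(A)),$$
so it suffices to bound each summand separately by a polynomial in $r$ whose total is at most $r(5r-1)/2$.

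To bound $\rk(\pi(A))$, decompose $\pi(A) = \pi(A)_p \oplus \pi(A)_{p'}$ into its $p$-primary and $p'$-primary parts. The $p$-part lies, up to $\GL_r(\FF_p)$-conjugation, inside the upper unitriangular group, and being abelian it is contained in the units of a commutative subalgebra of $M_r(\FF_p)$; by Schur's classical theorem such a subalgebra has $\FF_p$-dimension at most $\lfloor r^2/4\rfloor + 1$. The $p'$-part acts semisimply on $\FF_p^r$ and is contained in a maximal torus, so has rank at most $r$. This yields $\rk(\pi(A)) \leq \lfloor r^2/4\rfloor + r + 1$.

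To bound $\rk(A\cap K)$, introduce the filtration $K = K^{(0)} \supseteq K^{(1)} \supseteq \dots \supseteq K^{(c)} = \{\id\}$ given by $K^{(i)} = \{\phi \in K : (\phi - \id)(B) \subseteq p^{i+1}B\}$. Each successive quotient $K^{(i)}/K^{(i+1)}$ is elementary abelian and embeds $\FF_p$-linearly into $\Hom_{\FF_p}(B/pB,\; p^{i+1}B/p^{i+2}B)$. The main obstacle is that a naive per-level bound yields $\rk(A\cap K) \leq c\cdot r^2$, which is not uniform in $c$; hence the commutativity of $A\cap K$ must be exploited to collapse contributions across filtration levels. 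Pairwise commutation of elements of $A\cap K$ forces the induced linear maps on the associated graded pieces of $B$ to commute, so $A\cap K$ embeds, up to controlled error, into a single commutative subalgebra of $\bigoplus_i \Hom(p^iB/p^{i+1}B,\; p^{i+1}B/p^{i+2}B)$, whose $\FF_p$-dimension is $O(r^2)$ independent of $c$. Carefully tracking constants through this collapse and adding the two bounds yields $\rk(A) \leq r(5r-1)/2$, as desired.
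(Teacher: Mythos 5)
The paper does not actually prove this lemma: it is quoted verbatim from Roseblade \cite[Lemma 5]{Ro} (the Gorchakov--Hall--Merzlyakov--Roseblade lemma), so a complete self-contained argument would be a genuine addition. Your reduction is reasonable as far as it goes: passing to an abelian subgroup $A$, using the surjection $\pi\colon\Aut(B)\to\Aut(B/pB)\cong\GL_r(\FF_p)$ with $p$-group kernel $K$, the subadditivity $\rk(A)\leq\rk(A\cap K)+\rk(\pi(A))$, and the Schur/maximal-torus bounds for $\rk(\pi(A))$ are all sound steps.

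The gap is in the bound for $\rk(A\cap K)$, which is the heart of the lemma, and it is not established. The phrases ``up to controlled error'' and ``carefully tracking constants through this collapse'' stand in for the entire argument. Worse, the structural claim on which the collapse rests is false as stated: for $B=(\ZZ/p^c)^r$ each graded piece $p^iB/p^{i+1}B$ has dimension $r$, so $\bigoplus_i\Hom(p^iB/p^{i+1}B,\,p^{i+1}B/p^{i+2}B)$ has $\FF_p$-dimension $(c-1)r^2$, which grows with the exponent rather than being $O(r^2)$ independent of $c$. Moreover, the proposed mechanism (pairwise commutation of the induced graded maps) gives no constraint in the first nontrivial case: for $B=(\ZZ/p^2)^r$ the kernel $K\cong 1+pM_r(\ZZ/p^2)\cong M_r(\FF_p)$ is itself abelian of rank $r^2$, and all compositions of the degree-shifting graded maps land in $p^2B=0$, so ``commutativity of the graded pieces'' is vacuous there. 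This example also shows any correct bound on $\rk(A\cap K)$ must be at least $r^2$, so the constants genuinely matter; and even granting $\rk(A\cap K)\leq r^2$, your stated estimate $\rk(\pi(A))\leq\lfloor r^2/4\rfloor+r+1$ (a sum where a maximum would suffice, since the minimal number of generators of a finite abelian group is the maximum over its primary components) makes the total exceed $r(5r-1)/2$ already at $r=1$. As it stands the proof is incomplete; either supply the actual collapse argument with explicit constants, or cite Roseblade as the paper does.
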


The proof of the following lemma was kindly supplied to us by A. Jaikin.

\begin{lemma}
\label{lemma:alpha-nilpotent-2} Suppose that $B\unlhd \Gamma$
are finite $p$-groups, with $B$ abelian, and that the
morphism $\Gamma/B\to\Aut(B)$ given
by conjugation is injective. Let $r=\rk(B)$. Then
$$\alpha(\Gamma)\geq |\Gamma/B|^{\frac{1}{r}}.$$
\end{lemma}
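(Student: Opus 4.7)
The plan is to reduce the lemma to the key inequality: if $H\leq\Aut(B)$ is a faithful abelian $p$-subgroup and $r=\rk(B)$, then $|H|\leq [B:B^H]^r$. Granting this, the lemma follows formally. For any abelian $A\leq\Gamma$, set $\bar A=A/(A\cap B)$, which by hypothesis embeds in $\Gamma/B$ and hence acts faithfully on $B$. Since $A$ is abelian, $\bar A$ fixes $A\cap B$ pointwise, so $A\cap B\subseteq B^{\bar A}$ and thus $[B:A\cap B]\geq [B:B^{\bar A}]\geq|\bar A|^{1/r}$. Combining with $[\Gamma:A]=[\Gamma/B:\bar A]\cdot[B:A\cap B]$ and the trivial estimate $|\bar A|\leq|\Gamma/B|$ gives
\[
[\Gamma:A]\geq\frac{|\Gamma/B|}{|\bar A|}\cdot|\bar A|^{1/r}=\frac{|\Gamma/B|}{|\bar A|^{(r-1)/r}}\geq|\Gamma/B|^{1/r},
\]
which is the desired bound.

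For the key inequality, I would introduce an ascending $H$-fixed-point filtration $B_0\subseteq B_1\subseteq\dots\subseteq B_n=B$ of $B$. Set $B_0=B^H$ and recursively define $B_i$ as the preimage of $(B/B_{i-1})^H$ under $B\twoheadrightarrow B/B_{i-1}$. Because both $B$ and $H$ are finite $p$-groups, the standard fact that the fixed locus of any $p$-group action on a nontrivial finite $p$-group is itself nontrivial forces $B_i\supsetneq B_{i-1}$ unless $B_{i-1}=B$, so the filtration reaches $B$ after finitely many steps.

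Let $K_i=\ker(H\to\Aut(B_i))$, so $K_0=H$ and $K_n=1$ by faithfulness. For $h\in K_{i-1}$ and $b\in B_i$, the definition of $B_i$ ensures $h(b)-b\in B_{i-1}$, and since $h$ fixes $B_{i-1}$ pointwise, the map $\eta_h\colon b\mapsto h(b)-b$ descends to a homomorphism $B_i/B_{i-1}\to B_{i-1}$. The key observation will be that $h\mapsto\eta_h$ is a \emph{group} homomorphism $K_{i-1}\to\Hom(B_i/B_{i-1},B_{i-1})$, not merely a $1$-cocycle: the potentially nontrivial twist term $h(\eta_{h'}(b))$ simplifies to $\eta_{h'}(b)$ precisely because $\eta_{h'}(b)\in B_{i-1}$ and $h\in K_{i-1}$ acts trivially on $B_{i-1}$. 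Its kernel is $K_i$, so $|K_{i-1}/K_i|\leq|\Hom(B_i/B_{i-1},B_{i-1})|$. The elementary bound $|\Hom(M,N)|\leq|M|^{\rk N}$ for finite abelian $p$-groups (obtained by decomposing $M$ into cyclic summands), together with the monotonicity $\rk B_{i-1}\leq\rk B=r$ for subgroups of a finite abelian $p$-group, then gives $|K_{i-1}/K_i|\leq |B_i/B_{i-1}|^r$. Telescoping produces
\[
|H|=\prod_{i=1}^n|K_{i-1}/K_i|\leq\prod_{i=1}^n|B_i/B_{i-1}|^r=[B:B_0]^r=[B:B^H]^r.
\]

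The one step that really demands care is the construction of the filtration: it is exactly the condition that $h\in K_{i-1}$ acts trivially on $B_{i-1}$ that upgrades $h\mapsto\eta_h$ from a $1$-cocycle into an actual homomorphism, and the ascending fixed-point filtration is engineered to secure this property at every stage. Everything else — termination via the $p$-group fixed-point lemma, the bound on $|\Hom|$, and rank monotonicity under taking subgroups — is routine and presents no real obstacle.
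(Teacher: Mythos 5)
Your proof is correct, but the central counting step is genuinely different from the one in the paper (due to A.~Jaikin). Both arguments come down to the same estimate $|A/(A\cap B)|\le [B:A\cap B]^{r}$ for an abelian subgroup $A\le\Gamma$, and the final index manipulation ($[\Gamma:A]=[\Gamma:AB]\,[B:A\cap B]$ followed by the $r$-th--root trick) is essentially identical in both. The paper obtains the estimate in one stroke: choosing characters $\lambda_1,\dots,\lambda_r$ of $B$ with trivial common kernel, it sends $\ov{a}\in A/(A\cap B)$ to the $r$-tuple of characters $\ov{b}\mapsto\lambda_i([a,b])$ of $B/(A\cap B)$, and injectivity of this assignment (which uses that the centraliser of $B$ in $A$ is $A\cap B$, i.e.\ the injectivity hypothesis on $\Gamma/B\to\Aut(B)$) gives $|A/(A\cap B)|\le|B/(A\cap B)|^{r}$ directly. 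You instead prove the intermediate inequality $|H|\le[B:B^{H}]^{r}$ by an ascending fixed-point filtration and a stepwise embedding of $K_{i-1}/K_i$ into $\Hom(B_i/B_{i-1},B_{i-1})$; your observation that $h\mapsto\eta_h$ is a genuine homomorphism rather than merely a cocycle, precisely because $h\in K_{i-1}$ fixes $B_{i-1}$ pointwise, is exactly the point that makes this work, and the $\Hom$-counting and rank-monotonicity bounds are routine as you say. The paper's route is shorter; yours avoids characters altogether and isolates a reusable fact, namely that $[B:B^{H}]^{r}$ bounds the order of \emph{any} (not necessarily abelian) $p$-subgroup $H\le\Aut(B)$, with the abelianness of $A$ entering only through the inclusion $A\cap B\subseteq B^{\ov{A}}$.
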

\begin{proof}
Let $A$ be an arbitrary abelian subgroup of $\Gamma$. Since $B$ has rank
$r$, we can choose characters $\lambda_1,\dots,\lambda_r\in\wh{B}=\Hom(B,\CC^*)$ such that
$\bigcap_{i=1}^r\Ker\lambda_i=\{1\}$. Let $C=B/A\cap B$.
To each $\ov{a}\in A/A\cap B$ we associate the $r$-tuple
$(a_1,\dots,a_r)\in (\wh{C})^r$ by means of $a_i(\ov{b})=\lambda_i([a,b])$.
Since the centraliser of $B$ in $A$ is $A\cap B$, different elements of $A/A\cap B$ correspond to
different $r$-tuples. Thus
$$|A/A\cap B|\leq |(\wh{C})^r|=|C|^r.$$
Let $G$ be the subgroup of $\Gamma$ generated by $A$ and $B$. Since $B$ is normal in $G$, $G=AB$. Thus
$[G:A]=[B:A\cap B]$ and $[G:B]=[A:A\cap B]$. Hence we obtain
\begin{multline*}
[\Gamma:A]=[\Gamma:G][G:A]=[\Gamma:G][B:A\cap B]\geq \\
\geq [\Gamma:G][A/A\cap B]^{1/r}\geq ([\Gamma:G][G:B])^{1/r}=[\Gamma:B]^{1/r},
\end{multline*}
which is what we wanted to prove.
\end{proof}

We are now ready to prove Theorem \ref{thm:cyclic-by-abelian}.
Let $a,r$ be natural numbers, and let $p$ be a prime. Let $G$
be a finite $p$-group satisfying $\rk(G)\leq r$ and
$\alpha(G)>a^{18 r^5}$. Let $B\unlhd G$ be a maximal normal
abelian subgroup. By Lemma \ref{lemma:maximal-normal-abelian},
the action of $G$ on $B$ by conjugation induces an injective
morphism $c:G/B\to\Aut(B)$. We have
$$|G/B|\geq\alpha(G)>a^{18 r^5}.$$ By Lemma
\ref{lemma:GHMR}, $\rk(G/B)\leq 3r^2$. Lemma
\ref{lemma:rank-exponent} implies that
$$e(G/B)\geq |G/B|^{\frac{1}{2\rk(G/B)^2}}>(a^{18r^5})^{\frac{1}{18r^4}}=a^{r}.$$
Since $G/B$ is a $p$-group,
the exponent of $G/B$ coincides with the maximum of the orders
of its elements. So there exists some element $\gamma\in
G/B$ whose order satisfies $\ord(\gamma)>a^{r}$. Let
$Z=\la\gamma\ra\subseteq G/B$ and let $\pi:G\to G/B$ be
the quotient map. Define $\Gamma:=\pi^{-1}(Z)$. Then the groups
$B\unlhd \Gamma$ satisfy the hypothesis of Lemma
\ref{lemma:alpha-nilpotent-2}, so
$$\alpha(\Gamma)\geq |\Gamma/B|^{\frac{1}{r}}=|Z|^{\frac{1}{r}}=
\ord(\gamma)^{\frac{1}{r}}>a.$$ This completes
the proof of the theorem.

\subsection{Some technical results}
\label{s:more-on-p-groups}
We begin recalling a well known and elementary lemma.

\begin{lemma}
\label{lemma:element-h-p} Let $B$ be a finite abelian
$p$-group, let $\Phi\subseteq \Aut(B)$ be a $p$-subgroup, and let
$A\subset B$ be a $\Phi$-invariant proper subgroup. There
exists some $h\in B$ whose class in $B/A$ is $\Phi$-invariant
and has order $p$.
\end{lemma}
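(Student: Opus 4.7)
The plan is to pass to the quotient $C := B/A$, which is a nontrivial finite abelian $p$-group carrying an induced action of $\Phi$ (well-defined because $A$ is $\Phi$-invariant). The goal then reduces to producing a nonzero element of $C$ of order $p$ that is fixed by $\Phi$; lifting such an element to $B$ immediately yields the desired $h$.

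First I would isolate the $p$-torsion subgroup $C[p] := \{c\in C\mid pc=0\}$. This is a $\Phi$-invariant subgroup of $C$, and since $C$ is a nontrivial finite abelian $p$-group it contains an element of order $p$, so $C[p]$ is a nonzero $\FF_p$-vector space on which $\Phi$ acts $\FF_p$-linearly. Next I would invoke the classical fixed-point lemma for actions of $p$-groups on finite $p$-sets: every $\Phi$-orbit in $C[p]$ has cardinality a power of $p$, so $|C[p]^\Phi|\equiv |C[p]|\equiv 0\pmod p$. Since the zero element of $C[p]$ is obviously fixed, we have $|C[p]^\Phi|\geq 1$, and the congruence then forces $|C[p]^\Phi|\geq p$. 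In particular there exists a nonzero $\Phi$-fixed element $\bar h\in C[p]$.

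Finally, I would lift $\bar h$ to any element $h\in B$ with $h+A=\bar h$. By construction $h\notin A$ (so $h+A$ is nontrivial in $B/A$), $ph\in A$ (so the class of $h$ in $B/A$ has order exactly $p$), and $h+A$ is $\Phi$-invariant. This is precisely the element the lemma requires. I do not anticipate any real obstacle: the only substantive ingredient is the elementary orbit-counting argument for a $p$-group acting on an $\FF_p$-vector space, which is entirely standard.
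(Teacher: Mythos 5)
Your proof is correct and is essentially the paper's argument: both hinge on the mod-$p$ orbit count for the $\Phi$-action on the quotient, using that $[0]$ is fixed to force another fixed point. The only (immaterial) difference is that you restrict to the $p$-torsion subgroup $C[p]$ before counting, whereas the paper counts on all of $B/A$ and then replaces a fixed element $[g]$ of order $p^e$ by $p^{e-1}[g]$.
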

\begin{proof}
Consider the natural action of $\Phi$ on $B/A$. The element
$[0]\in B/A$ is fixed by $\Phi$, so its orbit has one element.
Since $|B/A|$ is divisible by $p$, there must be some
nontrivial $[g]\in B/A$ whose orbit consists also of a unique
element. Let $p^e$ be the order of $[g]$. Then $h:=p^{e-1}g$
has the desired properties.
\end{proof}

Our aim in this section is to generalize the previous lemma so
as to obtain elements whose class in $B/A$ is $\Phi$-invariant
and has order $p^k$, where $k$ is an arbitrary natural number.
Actually, we will generalize a weaker form of the lemma, in
which $\Phi$ is required to act trivially on $B/pB$.
To state the result we introduce some notation.

Suppose that $B$ is a finite abelian
$p$-group, and $k$ is a natural number. Define
$$\Aut_{(k)}(B)=\{\phi\in\Aut(B)\mid \phi(g)\in g+p^kB\text{
for all $g\in B$}\}.$$ The binomial formula implies that for
any $\phi\in \Aut_{(k)}(B)$ we have $\phi^p\in \Aut_{(k+1)}(B)$
(write $\phi=\Id+\psi$, where $\psi(g)\in p^kB$ for every $g$;
then $\phi^p=\Id+\sum_{k\geq 1}\left(p\atop k\right)\psi^k$).
Since $\Aut_{(k)}(B)=\{1\}$ for big enough $k$, it follows that
$\Aut_{(k)}(B)$ is a $p$-group.

We are now ready for our generalization of Lemma \ref{lemma:element-h-p}.

\begin{theorem}
\label{thm:element-h-p-k} Let $B$ be a finite abelian
$p$-group, let $r=\rk(B)$, and let $k$ be a natural number.
Let $A\subseteq B$ be an $\Aut_{(k)}(B)$-invariant
subgroup. There exists some $h\in B$ whose class in $B/A$ is
$\Aut_{(k)}(B)$-invariant and has order $p^k$, unless $|B/A|\leq
p^{r(k-1)}$.
\end{theorem}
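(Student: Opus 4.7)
The plan is to locate the required element inside the canonical subgroup $p^{m-k}(B/A)\subseteq B/A$, where $m$ denotes the exponent of the quotient $B/A$. The key observation is that this subgroup is fixed pointwise by the induced action of $\Aut_{(k)}(B)$, and it contains an element of order $p^k$ exactly when $m\geq k$. So the dichotomy in the statement corresponds to whether $m\geq k$ or $m\leq k-1$.

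\emph{Reduction to an action on $B/A$.} First I would observe that since $A$ is $\Aut_{(k)}(B)$-invariant, every $\phi\in\Aut_{(k)}(B)$ descends to an automorphism $\ov{\phi}$ of $B/A$. The defining congruence $\phi(g)-g\in p^kB$ transfers to $\ov{\phi}(\ov g)-\ov g\in p^k(B/A)$, so the induced homomorphism lands inside $\Aut_{(k)}(B/A)$. It therefore suffices to exhibit an element of $B/A$ of order $p^k$ that is fixed by the a priori larger group $\Aut_{(k)}(B/A)$.

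\emph{The fixed subgroup.} Assuming $m\geq k$, I would verify that $\Aut_{(k)}(B/A)$ acts trivially on $p^{m-k}(B/A)$ by a one-line calculation: writing $\ov\phi(\ov g)=\ov g+p^k\ov\zeta$, we obtain
$$\ov\phi(p^{m-k}\ov g)=p^{m-k}\ov g+p^m\ov\zeta=p^{m-k}\ov g,$$
since $p^m(B/A)=0$. Then by the structure theorem, $B/A\simeq\bigoplus_i\ZZ/p^{e_i}$ with some $e_i=m$, and $p^{m-k}$ in a cyclic summand of order $p^m$ is an element of $p^{m-k}(B/A)$ of order exactly $p^k$. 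Lifting it to $B$ produces the required $h$.

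\emph{The exceptional case.} If instead $m\leq k-1$, then $B/A$ has exponent at most $p^{k-1}$ and rank at most $\rk(B)=r$, so the structure theorem yields $|B/A|\leq(p^{k-1})^r=p^{r(k-1)}$, placing us in the stated exception.

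I do not expect any significant obstacle: the only nontrivial conceptual step is identifying the fixed subgroup $p^{m-k}(B/A)$, after which the verifications reduce to short computations using only the definition of $\Aut_{(k)}$ and the $\Aut_{(k)}(B)$-invariance of $A$. In particular this avoids the induction on $k$ that one might naively attempt starting from Lemma \ref{lemma:element-h-p}.
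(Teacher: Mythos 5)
Your proof is correct and follows essentially the same route as the paper: the paper's element $h=p^{c-k}b_i$, where $p^c$ is the largest order of a generator's class in $B/A$, is exactly your $p^{m-k}$-multiple of an element of maximal order, since that $p^c$ equals the exponent of $B/A$. The only difference is organizational — you check invariance after passing to the quotient, using that $\Aut_{(k)}(B/A)$ fixes $p^{m-k}(B/A)$ pointwise, while the paper verifies $\phi(h)\in h+A$ directly in $B$ from $p^cb_j\in A$; the exceptional case $m\leq k-1$ is handled identically in both.
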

We remark that the condition that $hA\in B/A$ is
$\Aut_{(k)}(B)$-invariant implies that $\la A,nh\ra\subseteq B$ is
$\Aut_{(k)}(B)$-invariant for every $n\in\ZZ$.
\begin{proof}
Let $\{b_1,\dots,b_r\}$ be a generating set of $B$ and define,
for every $i$, $$c_i=\min\{j\in\ZZ_{\geq 0}\mid p^jb_i\in
A\}.$$ Let $c=\max\{c_1,\dots,c_r\}.$

Suppose that $c\geq k$. We prove that in this case there exists
some $h\in B$ with the desired properties. Suppose that $c=c_i$
and define
$$h=p^{c_i-k}b_i=p^{c-k}b_i.$$
Let us check that $[h]\in B/A$ is $\Aut_{(k)}(B)$-invariant.
Equivalently, we have to prove that $\phi(h)\in h+A$ for all
$\phi\in\Aut_{(k)}(B)$. Let $\phi\in\Aut_{(k)}(B)$. We have
$$\phi(b_i)=b_i+p^k(m_1b_1+\dots+m_rb_r)$$ for some
$m_1,\dots,m_r\in\ZZ$, so
$$\phi(h)=\phi(p^{c-k}b_i)=p^{c-k}b_i+p^c(m_1b_1+\dots+m_rb_r)\in h+A$$
because $p^cb_j\in A$ for every $j$. The order of $[h]\in B/A$
is a power of $p$, so it follows from the definitions of $c$
and $h$ that $[h]$ has order $p^k$.

Hence if $h$ does not exist then $c\leq k-1$. But in that case the map
$$(\ZZ/p^{k-1})^r\ni (l_1,\dots,l_r)\mapsto \sum l_ib_i\in
B/A$$ is well defined and surjective, so $|B/A|\leq p^{r(k-1)}$.
\end{proof}

We will later use the following lemma.

\begin{lemma}
\label{lemma:index-Aut(k)} Let $B$ be a finite abelian $p$-group and let $k$ be a
natural number. Then $\Aut_{(k)}(B)$ is a normal
$p$-subgroup of $\Aut(B)$, so it is contained in any Sylow
$p$-subgroup of $\Aut(B)$. Let $r=\rk(B)$ and let
$\Aut(B)_p\subseteq\Aut(B)$ be a Sylow $p$-subgroup. We have
$$[\Aut(B)_p:\Aut_{(k)}(B)]\leq p^{kr^2}.$$
\end{lemma}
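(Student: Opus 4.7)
My plan is to establish the three assertions in order. For normality of $\Aut_{(k)}(B)$, I would compute directly: if $\phi\in\Aut_{(k)}(B)$ and $\psi\in\Aut(B)$, then for every $g\in B$,
$$\psi\phi\psi^{-1}(g)-g=\psi\bigl(\phi(\psi^{-1}(g))-\psi^{-1}(g)\bigr)\in\psi(p^kB)=p^kB,$$
so $\psi\phi\psi^{-1}\in\Aut_{(k)}(B)$. That $\Aut_{(k)}(B)$ is a $p$-group is already observed in the paragraph preceding the statement. Combining these two facts, $\Aut_{(k)}(B)$ is a normal $p$-subgroup of $\Aut(B)$ and hence sits inside every Sylow $p$-subgroup, by the standard fact that if $N\unlhd G$ is a $p$-group and $P$ is any Sylow $p$-subgroup of $G$ then $NP$ is again a $p$-subgroup, forcing $NP=P$.

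For the index bound, I would use the descending filtration
$$\Aut(B)_p\;\supseteq\;\Aut_{(1)}(B)\;\supseteq\;\Aut_{(2)}(B)\;\supseteq\;\cdots\;\supseteq\;\Aut_{(k)}(B)$$
and control each successive index. The quotient $\Aut(B)/\Aut_{(1)}(B)$ injects into $\Aut(B/pB)\simeq\GL(r,\FF_p)$, whose Sylow $p$-subgroup has order $p^{r(r-1)/2}$; this bounds $[\Aut(B)_p:\Aut_{(1)}(B)]$ by $p^{r(r-1)/2}$. For each $1\le i<k$, the assignment $\phi\mapsto(g\mapsto\phi(g)-g)$ defines a map $\Aut_{(i)}(B)\to\Hom(B,p^iB/p^{i+1}B)$ whose kernel is exactly $\Aut_{(i+1)}(B)$. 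Since $p^iB/p^{i+1}B$ is an elementary abelian quotient of $B/pB$ via $g\mapsto p^ig$, its rank is at most $r$, whence $|\Hom(B,p^iB/p^{i+1}B)|\le p^{r^2}$.

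Multiplying the bounds gives
$$[\Aut(B)_p:\Aut_{(k)}(B)]\;\le\; p^{r(r-1)/2}\cdot p^{(k-1)r^2}\;\le\; p^{kr^2},$$
using the trivial estimate $r(r-1)/2\le r^2$. The one step I expect to require care is verifying that $\phi\mapsto\phi-\Id$ really is a homomorphism modulo $p^{i+1}B$: if $\phi(g)=g+\phi'(g)$ and $\psi(g)=g+\psi'(g)$ with $\phi',\psi'$ mapping $B$ into $p^iB$, then one expands
$$(\phi\psi)(g)=g+\phi'(g)+\psi'(g)+\phi'(\psi'(g)),$$
and the cross term $\phi'(\psi'(g))$ lies in $\phi'(p^iB)\subseteq p^{2i}B\subseteq p^{i+1}B$ precisely because $i\ge 1$, so reducing modulo $p^{i+1}B$ turns composition into addition. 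Everything else is routine bookkeeping.
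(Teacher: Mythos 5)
Your proof is correct, but it takes a more self-contained route than the paper's. The paper disposes of the index bound in one line: $\Aut_{(k)}(B)$ is the kernel of $\Aut(B)\to\Aut(B/p^kB)$, so $\Aut(B)_p/\Aut_{(k)}(B)$ is a $p$-subgroup of $\Aut(B/p^kB)$, and the bound $p^{kr^2}$ then comes from the argument in Lemma \ref{lemma:rank-exponent}, which in turn rests on a cited theorem of Hall on the order of $\Aut$ of a finite abelian $p$-group. You instead unwind that black box: you filter by the congruence subgroups $\Aut_{(i)}(B)$, bound the top quotient inside a Sylow $p$-subgroup of $\GL(r,\FF_p)$, and embed each graded piece $\Aut_{(i)}(B)/\Aut_{(i+1)}(B)$ into $\Hom(B,p^iB/p^{i+1}B)$, carefully checking (correctly, using $2i\geq i+1$ for $i\geq 1$) that $\phi\mapsto\phi-\Id$ is additive modulo $p^{i+1}B$. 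The resulting numerics, $p^{r(r-1)/2}\cdot p^{(k-1)r^2}\leq p^{kr^2}$, are exactly those hidden in the paper's citation of Hall. What the paper's version buys is brevity and reuse of an already-stated lemma; what yours buys is a proof with no external input, which in effect reproves the relevant special case of Hall's theorem. Your treatment of normality by direct conjugation is equivalent to the paper's observation that $p^kB$ is characteristic, and the "normal $p$-subgroup lies in every Sylow $p$-subgroup" step is handled identically.
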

\begin{proof}
The subgroup $p^kB\subseteq B$ is characteristic, so
$\Aut_{(k)}(B)$ is a normal subgroup of $\Aut(B)$. Moreover,
the quotient $\Aut(B)/\Aut_{(k)}(B)$ can be identified with a
subgroup of $\Aut(B/p^kB)$. By the arguments in the proof of Lemma
\ref{lemma:rank-exponent}, if $S\subseteq
\Aut(B/p^kB)$ is a $p$-subgroup then $|S|\leq p^{kr^2}$, so
$[\Aut(B)_p:\Aut_{(k)}(B)]=|\Aut(B)_p/\Aut_{(k)}(B)|\leq p^{kr^2}.$
\end{proof}

\section{Vector bundles with nonzero Euler number}
\label{s:proof-thm:main-fixed-point}

The main purpose of this section is to prove Theorem \ref{thm:main-fixed-point}.

\subsection{Preliminaries}

\begin{lemma}
\label{lemma:cohom-no-augmenta}
Let $X$ be a manifold, let $p$ be a prime, and
let $G$ be a finite $p$-group acting continuously on $X$.
We have
$$\sum_j b_j(X^G;\FF_p)\leq \sum_j b_j(X;\FF_p).$$
In particular, the number of connected components of $X^G$ is
at most $\sum_j b_j(X;\FF_p)$.
\end{lemma}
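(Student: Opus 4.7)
The plan is to reduce the general statement to the classical Smith inequality for a cyclic group of order $p$ via induction on $|G|$, and then derive the statement about connected components as an immediate corollary.

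\textbf{Step 1 (reduction to the cyclic case).} I would argue by induction on $|G|$. The case $|G|=1$ is trivial. For the inductive step, since $G$ is a nontrivial finite $p$-group, its center $Z(G)$ is nontrivial, so it contains a subgroup $H$ of order $p$; because $H$ is central (in particular normal) in $G$, the quotient $G/H$ acts on the $H$-fixed locus $X^H$, and one checks that $(X^H)^{G/H}=X^G$. Assuming the cyclic case $|H|=p$ of the inequality, applied to the action of $H$ on $X$, gives
\[
\sum_j b_j(X^H;\FF_p)\leq \sum_j b_j(X;\FF_p).
\]
By the inductive hypothesis applied to the action of $G/H$ on $X^H$ we get
\[
\sum_j b_j(X^G;\FF_p)=\sum_j b_j((X^H)^{G/H};\FF_p)\leq \sum_j b_j(X^H;\FF_p),
\]
and chaining the two inequalities yields the desired bound for $G$.

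\textbf{Step 2 (the case $G=\ZZ/p$).} This is the classical Smith inequality, which holds for any continuous action of $\ZZ/p$ on a paracompact (in our case smooth) manifold. One standard route is through Borel's localization theorem: letting $t\in H^*(BG;\FF_p)$ be a suitable polynomial generator, one shows that the restriction map $H^*_G(X;\FF_p)\to H^*_G(X^G;\FF_p)$ becomes an isomorphism after inverting $t$, and then compares $\FF_p$-Poincar\'e series of $X_G$ and $X^G_G=X^G\times BG$. Alternatively one can argue directly from the Smith exact sequences relating $H^*(X;\FF_p)$, $H^*(X^G;\FF_p)$, and $H^*(X/G;\FF_p)$. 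Either way the conclusion is
\[
\sum_j b_j(X^G;\FF_p)\leq \sum_j b_j(X;\FF_p),
\]
which is exactly what Step~1 required. The hard part is strictly the invocation of Smith theory; modulo this classical input, the rest is formal.

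\textbf{Step 3 (connected components).} Each connected component of $X^G$ contributes at least $1$ to $b_0(X^G;\FF_p)$, so the number of connected components of $X^G$ is at most $b_0(X^G;\FF_p)\leq \sum_j b_j(X^G;\FF_p)$, and Step~1 finishes the proof.
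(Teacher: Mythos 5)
Your proposal is correct and follows essentially the same route as the paper: the paper also reduces to the cyclic case by induction on $|G|$ via a central subgroup of order $p$ acting first, and invokes classical Smith theory (citing Borel's \emph{Seminar on transformation groups}, Theorem III.4.3) for the base case $|G|=p$.
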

\begin{proof}
If $|G|=p$ then the statement follows from \cite[Theorem III.4.3]{Bo}.
For general $G$ use ascending induction on $|G|$. In the induction step,
choose a central subgroup $G_0\subset G$ of order $p$ and apply the inductive
hypothesis to the action of $G/G_0$ on $X^{G_0}$.
\end{proof}

The following classical theorem of Camille Jordan was already
mentioned in the introduction (see \cite{J}, and \cite{CR,M1}
for modern proofs).
\begin{theorem}[Jordan]
\label{thm:Jordan-classic} For any natural number $n$ there is some constant $\Jor_n$
such that any finite subgroup $G\subset\GL(n,\RR)$ has an abelian subgroup $A$
satisfying $[G:A]\leq \Jor_n.$
\end{theorem}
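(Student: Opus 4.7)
The plan is to follow Jordan's classical strategy: reduce to finite subgroups of the compact group $\U(n)$ by averaging, exploit the contraction of commutators near the identity in $\U(n)$, and bound the index of the resulting subgroup by a packing argument.

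First, given a finite subgroup $G \subset \GL(n,\RR) \subset \GL(n,\CC)$, averaging the standard Hermitian form on $\CC^n$ over $G$ produces a $G$-invariant positive definite Hermitian form, and a suitable change of basis conjugates $G$ into $\U(n)$. Since the assertion of the theorem is invariant under conjugation, it suffices to prove it for finite subgroups of $\U(n)$.

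Second, I would exploit the Lie-algebraic structure of $\U(n)$ near the identity. A direct Taylor expansion of the group commutator yields a constant $K_n$ with
\begin{equation*}
\|uvu^{-1}v^{-1} - I\|_{\mathrm{op}} \;\leq\; K_n\,\|u - I\|_{\mathrm{op}}\,\|v - I\|_{\mathrm{op}}
\end{equation*}
for all $u,v \in \U(n)$ sufficiently close to $I$. Fix $\epsilon > 0$, depending only on $n$, small enough that this estimate is strictly contracting on $V := \{g \in \U(n) : \|g - I\|_{\mathrm{op}} < \epsilon\}$. Given a finite $G \subseteq \U(n)$, set $H := \langle G \cap V \rangle$. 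Bi-invariance of the operator norm under unitary conjugation makes $H$ normal in $G$, and iterating the commutator estimate along the lower central series of $H$ forces that series to terminate (the iterates shrink geometrically to $I$ inside the finite group $G$), so $H$ is nilpotent of class bounded purely in terms of $n$. The passage from a nilpotent subgroup of bounded class to an abelian subgroup of bounded index is standard: $H$ splits as the direct product of its Sylow $p$-subgroups, and each Sylow $p$-subgroup of $H$ has $p$-rank at most $n$ (since any finite abelian subgroup of $\U(n)$ is simultaneously diagonalizable, hence sits inside a maximal torus $T^n \cong (S^1)^n$, and a finite abelian subgroup of $T^n$ is generated by at most $n$ elements). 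Combined with Theorem \ref{thm:cyclic-by-abelian} applied to each Sylow factor together with the bound on its nilpotency class, this yields an abelian subgroup $A \subseteq H$ of index bounded purely in terms of $n$.

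Third, the index $[G : H]$ is bounded by a packing argument. Distinct left coset representatives of $H$ in $G$ must lie at operator-norm distance at least $\epsilon$ apart (otherwise their quotient would be an element of $G \cap V \subseteq H$), so their $\epsilon/2$-neighborhoods inside $\U(n)$ form a pairwise disjoint family of open sets, and a Haar-measure comparison in the compact group $\U(n)$ bounds their cardinality by a constant $N_n$ depending only on $n$. Composing, $[G : A] \leq N_n \cdot [H:A] \leq \Jor_n$ for a constant depending only on $n$, as required. The main obstacle is the uniform choice of $\epsilon$ and the combinatorial bookkeeping in passing from nilpotency of bounded class to abelianness of bounded index; modulo these technical details — which are carried out carefully in \cite{CR, M1} — the argument is a direct combination of averaging, Lie-algebraic estimates, compactness of $\U(n)$, and the finite $p$-group theory developed earlier in the paper.
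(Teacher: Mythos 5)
The paper does not prove this statement: it is quoted as Jordan's classical theorem, with \cite{J} and \cite{CR,M1} cited for proofs, so there is no internal argument to compare yours with. Your outline follows the standard route --- average a Hermitian form to conjugate $G$ into $\U(n)$, form the subgroup $H$ generated by $G\cap V$ for a small conjugation-invariant neighbourhood $V$ of the identity, and bound $[G:H]$ by a packing argument in the compact group $\U(n)$ --- and your first and third steps are correct as written. The problem is the middle step.

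Two things go wrong there. First, the contraction estimate does yield that $H$ is nilpotent (to run the induction along the lower central series one needs the standard fact that $[A,B]$ is generated by the conjugates of commutators of generators, together with the unitary invariance of the operator norm, since arbitrary elements of $H$ need not lie near $I$), but it does \emph{not} bound the nilpotency class in terms of $n$ alone: the series is only forced to terminate once the geometric decay $(K_n\epsilon)^k\epsilon$ drops below the minimal distance from $I$ to $G\setminus\{I\}$, and that minimal distance depends on the particular group $G$. Second, and more seriously, even granting a class bound and the rank bound $\rk(H)\leq n$, the conclusion ``abelian subgroup of index bounded in terms of $n$'' does not follow: the Heisenberg group over $\FF_p$ has nilpotency class $2$ and rank $2$, yet its minimal abelian-subgroup index is $p$, which is unbounded. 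Theorem \ref{thm:cyclic-by-abelian} cannot repair this in the way you invoke it, because it runs in the opposite direction: to deduce $\alpha(P)\leq a^{18n^5}$ from it one must first prove that \emph{every} abelian-by-cyclic subgroup $\Gamma\subseteq P$ satisfies $\alpha(\Gamma)\leq a$ for some $a=a(n)$, and neither the class bound nor the rank bound enters its hypotheses or supplies that input. The classical proofs close exactly this gap by establishing the stronger statement that $H$ itself is \emph{abelian} once $\epsilon=\epsilon(n)$ is small enough; this requires more than the bare contraction of iterated commutators (for instance a minimality argument over non-commuting pairs in $G\cap V$, or the eigenvalue-clustering arguments carried out in \cite{CR}), and it is the one genuinely non-formal step of the theorem. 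With $H$ abelian, your packing argument finishes the proof directly and no appeal to Section \ref{s:finite-p-groups} is needed.
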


\begin{corollary}
\label{cor:Jordan-classic} Let $X$ be an $m$-dimensional
connected smooth manifold and let $G$ be a finite group acting
smoothly and effectively on $X$. If $X^G\neq\emptyset$ then
there exists an abelian subgroup $A\subseteq G$ such that
$[G:A]\leq\Jor_m$. If furthermore $m=2n$ and there is a
$G$-invariant almost complex structure on $X$ then $A$ can be
generated by $n$ elements.
\end{corollary}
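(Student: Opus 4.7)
The plan is to reduce the statement to Jordan's theorem \ref{thm:Jordan-classic} by linearising the action at a fixed point. Pick any $x\in X^G$, which exists by hypothesis. Differentiation at $x$ yields a homomorphism
$$d_x:G\longrightarrow \GL(T_xX),$$
and I claim that $d_x$ is injective. Granting this for a moment, Theorem \ref{thm:Jordan-classic} applied to the finite subgroup $d_x(G)\subseteq \GL(T_xX)\cong \GL(m,\RR)$ produces an abelian subgroup $\widetilde{A}\subseteq d_x(G)$ with $[d_x(G):\widetilde{A}]\leq \Jor_m$, and $A:=d_x^{-1}(\widetilde{A})$ is the desired abelian subgroup of $G$.

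The key step is the injectivity of $d_x$. First I would apply Bochner's linearisation theorem: for a smooth action of a finite group fixing $x$, there is a $G$-invariant open neighbourhood $U$ of $x$ and a $G$-equivariant diffeomorphism $U\to V\subset T_xX$ intertwining the $G$-action on $U$ with the linear action $d_xG$ on an open $V\ni 0$ (concretely, average a Riemannian metric to make it $G$-invariant and use the exponential map). Let $g\in \Ker d_x$. Then $g$ acts trivially on $U$. Consider the set
$$F=\{y\in X\mid g\text{ is the identity on some neighbourhood of }y\}.$$
This set is open by definition, and it is nonempty because it contains $U$. To see that $F$ is closed, take $y\in\overline{F}$; continuity gives $g(y)=y$, so Bochner's theorem applies at $y$ as well, identifying $g$ near $y$ with the linear map $d_{y}g$. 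Since $F$ accumulates at $y$, the linear map $d_{y}g$ fixes an open subset of $T_yX$, hence equals the identity, so $y\in F$. By connectedness of $X$ we conclude $F=X$, so $g=\id$; effectiveness is not even needed here (beyond using it to identify $G$ with its image in $\Diff(X)$). The main (and only nontrivial) obstacle is the appeal to Bochner's linearisation, but this is standard.

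For the refined statement assume $m=2n$ and that $J$ is a $G$-invariant almost complex structure on $X$. Then $d_x$ takes values in $\GL(T_xX,J_x)\cong \GL(n,\CC)$, so after applying Jordan's theorem the abelian subgroup $\widetilde{A}$ lies in $\GL(n,\CC)$. A finite abelian subgroup of $\GL(n,\CC)$ is simultaneously diagonalisable, so up to conjugation $\widetilde{A}\subseteq (\CC^*)^n$. The $n$ coordinate projections give characters $\chi_1,\dots,\chi_n\in\wh{\widetilde{A}}$ which separate points of $\widetilde{A}$, and therefore generate $\wh{\widetilde{A}}$. Since a finite abelian group is (noncanonically) isomorphic to its dual, $\widetilde{A}$ can be generated by $n$ elements, and the same is true of $A\cong \widetilde{A}$. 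This completes the plan.
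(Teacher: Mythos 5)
Your proposal is correct and takes essentially the same route as the paper: linearise the action at a fixed point via a $G$-invariant Riemannian metric and the exponential map, prove injectivity of the isotropy representation $G\to\GL(T_xX)$ by an open--closed/connectedness argument, apply Jordan's theorem, and in the almost complex case identify the image with an abelian subgroup of $\GL(n,\CC)$ (the paper uses $U(n)$) to obtain $n$ generators. The paper packages your set $F$ argument as the observation that $X^{\la g\ra}$ is a closed submanifold which, having nonempty interior, would equal the connected manifold $X$, contradicting effectiveness --- the same reasoning in slightly different words.
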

\begin{proof}
Take a $G$-invariant Riemannian metric $\nu$ on $X$. For any
subgroup $H\subseteq G$ and any $x\in X^H$ the exponential map
$\exp_x^{\nu}:T_xX\to X$ is $H$-equivariant and induces a
diffeomorphism between a neighborhood of $0$ in $T_xX$ and a
neighborhood of $x$ in $X$. Hence, $X^H$ is a smooth
submanifold of $X$, and $X^H\neq X$ unless $H=\{1\}$, because
$G$ acts effectively. Since $X$ is connected, we deduce that
$X^H$ has empty interior unless $H=\{1\}$. So for any $x\in
X^G$ the derivative of the action of $G$ at $x$ induces an {\it
injective} morphism $G\hookrightarrow\GL(T_x)$: indeed, if the
action of some nontrivial element $g\in G$ on $T_xX$ were
trivial, then by the $G$-equivariance of $\exp_x^{\nu}:T_xX\to
X$ there would exist some neighbourhood of $x$ on which $g$
would act trivially and $X^{\la g\ra}$ would have nonempty
interior. Since $\GL(T_xX)\simeq\GL(m,\RR)$, the first part of
the corollary follows from Theorem \ref{thm:Jordan-classic}.
Finally, if there is a $G$-invariant almost complex structure
on $X$ then $A$ can be identified with an abelian subgroup of
$U(n)$, and this immediately implies that $A$ can be generated
by $n$ elements.
\end{proof}

We will use the following theorem of Mann and Su, see
\cite[Theorem 2.5]{MS}.

\begin{theorem}
\label{thm:MS} For any compact manifold $X$ there
exists some integer $r\in\ZZ$ depending only on $H^*(X)$ with
the property that for any prime $p$ and any elementary
$p$-group $(\ZZ/p)^s$ admitting an effective action on $X$ we
have $s\leq r$.
\end{theorem}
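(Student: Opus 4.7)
The plan is to reduce to isotropy-group control via Smith theory and then to bound the rank by induction on $\dim X$. Assume $G=(\ZZ/p)^s$ acts smoothly and effectively on $X$, write $n=\dim X$, and set $B=\sum_j b_j(X;\FF_p)$. Applying Lemma \ref{lemma:cohom-no-augmenta} to every subgroup $H\subseteq G$ yields the uniform estimate $\sum_j b_j(X^H;\FF_p)\le B$, which will control all fixed-point strata simultaneously.

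First I would establish the local bound. Fix a $G$-invariant Riemannian metric on $X$ and use the $G$-equivariance of its exponential map, exactly as in the proof of Corollary \ref{cor:Jordan-classic}, to get an injection $G_x\hookrightarrow \GL(T_xX)\simeq \GL(n,\RR)$ at every $x$ in any nonempty fixed set. An elementary abelian $p$-subgroup of $\GL(n,\RR)$ can be simultaneously diagonalized after complexification with $p$-th root of unity eigenvalues, and faithfulness of such a representation forces rank at most $n$. Hence $\rk(G_x)\le n$ for every $x\in X$.

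Next I would induct on $n$. The base case $n=0$ is trivial, since then $X$ is finite with $|X|\le b_0(X;\FF_p)$ and $s\le\log_p|X|!$. For the inductive step, if $X^G\ne\emptyset$ the local bound already gives $s\le n$. Otherwise, choose $H\subseteq G$ of minimum corank subject to $X^H\ne\emptyset$; then $\rk(H)\le n$, and the minimality forces every nontrivial cyclic subgroup of $G/H$ to act without fixed points on $X^H$. Since $G/H$ is elementary abelian, this means $G/H$ acts \emph{freely} on $X^H$. I would then bound $s-\rk(H)$ from the resulting covering $X^H\to X^H/(G/H)$: the identity $\chi(X^H)=p^{\,s-\rk(H)}\chi(X^H/(G/H))$ together with $|\chi(X^H)|\le B$ yields $p^{\,s-\rk(H)}\le B$ whenever $\chi(X^H)\ne 0$, so $s\le n+\log_p B$. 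In the remaining case $\chi(X^H)=0$, I would pass to a connected component $C\subseteq X^H$ (which is $Z_G(H)$-invariant and strictly lower-dimensional, as $X$ is connected and $G$ acts effectively) and invoke the inductive hypothesis on $C$ with the induced $G/H$-action, having first quotiented out the kernel of $G/H\to\Diff(C)$.

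The principal obstacle will be the delicate bookkeeping in the induction: one must verify that after restriction to a component and passage to an effective quotient, the ambient cohomology bound $B$ is inherited (which the Smith estimate provides), and that the dimension actually drops. The $\chi=0$ case is the genuinely subtle point, and might be better handled by an application of Borel's localization theorem — the Krull dimension of $H^*_G(X;\FF_p)$ modulo nilpotents equals the maximum $p$-rank of an isotropy, which by the local bound is at most $n$, and this maximum majorizes $s$ up to a quantity bounded by the $\FF_p$-Betti numbers, yielding the desired constant $r$ depending only on $H^*(X)$.
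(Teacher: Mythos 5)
First, a point of reference: the paper does not prove this statement at all --- it is quoted verbatim from Mann and Su \cite[Theorem 2.5]{MS} --- so you are attempting to reprove a cited theorem rather than to reproduce an argument from the text. The non-free part of your sketch is sound: faithfulness of the isotropy representation at a fixed point gives $\rk(G_x)\leq n$, Lemma \ref{lemma:cohom-no-augmenta} controls $\sum_j b_j(X^H;\FF_p)$ uniformly, choosing $H$ of maximal rank with $X^H\neq\emptyset$ does force $G/H$ to act freely on $X^H$, and then $p^{s-\rk(H)}$ divides $\chi(X^H)$, which finishes the argument whenever $\chi(X^H)\neq 0$.

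The genuine gap is the terminal case of a \emph{free} action with vanishing Euler characteristic, and neither of your proposed remedies closes it. If no nontrivial element of $G$ has a fixed point you are forced to take $H=\{1\}$, so $X^H=X$ is connected of full dimension, the component $C$ equals $X$, the kernel you quotient out is trivial, and the induction on dimension makes no progress; your parenthetical ``strictly lower-dimensional'' is only valid for $H\neq\{1\}$. The localization fallback also fails: Quillen's theorem identifies the Krull dimension of $H^*_G(X;\FF_p)$ with the maximal rank of an isotropy group, which for a free action is $0$ and says nothing about $s$, and the extra claim that this ``majorizes $s$ up to a quantity bounded by the $\FF_p$-Betti numbers'' is exactly the assertion that free actions of $(\ZZ/p)^s$ satisfy $s\leq f\bigl(\sum_j b_j(X;\FF_p)\bigr)$ --- in its sharp form $\sum_j b_j\geq 2^s$ this is Carlsson's conjecture, which is open, and even weaker unconditional versions constitute the real content of Mann--Su. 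The case is not degenerate: for the coordinatewise free action of $(\ZZ/p)^k$ on $(S^3)^k$ one has $\chi=0$, every isotropy group trivial, and $s=k$, so any bound must be extracted from $H^*(X)$ by an argument you have not supplied (here $\sum_j b_j=2^k$, so the bound is genuinely cohomological). A correct proof therefore needs a separate input for free actions on manifolds with $\chi=0$; this is where the actual work of Mann and Su lies, and it cannot be recovered from the isotropy bound, the Smith inequality, and Euler-characteristic divisibility alone.
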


\subsection{Proof of Theorem \ref{thm:main-fixed-point}}
\label{ss:proof-of-thm-fixed-pt}  Let $(X,J)$ be an
almost complex, compact, connected and
orientable manifold, let $E\to X$ be a complex vector bundle
satisfying $\rk_{\RR}(E)=\dim_{\RR}X$ and assume that
$\la[X],\e(E)\ra\neq 0$.

Let $E^*\subset E$ be the complementary of the zero section,
and let $S(E)$ be the sphere bundle of $E$, defined as the
quotient of $E^*$ by the action of $\RR_{>0}$ given by
multiplication. Any finite group acting effectively on $E$ by
vector bundle automorphisms acts also effectively on $S(E)$.
Let $r$ be the number given by Mann and Su's Theorem
\ref{thm:MS} applied to $S(E)$. The cohomology of
$H^*(S(E))$ can be computed in terms of $H^*(X)$ and the Euler
class $\e(E)$ by Gysin, and $\e(E)$ is uniquely
determined by $\la[X],\e(E)\ra$, so $r$ can be chosen to depend
only on $H^*(X)$ and $\la[X],\e(E)\ra$.

Fix some prime $p$. Define the natural number $k$ by the
condition that $p^k$ is the smallest power of $p$ not dividing
$\la[X],\e(E)\ra$. Consider a finite $p$-group $G$ acting
smoothly and effectively on $E$ by vector bundle automorphisms,
and lifting an action on $X$ which preserves $J$.
It follows from the definition of $r$ in
the previous paragraph that
$$\rk(G)\leq r.$$
We will prove the theorem in two steps.

\noindent{\bf Step 1.} Suppose that $G$ sits in an exact
sequence
\begin{equation}
\label{eq:1-B-G-H-1}
1\longrightarrow
B \longrightarrow G \stackrel{\pi}{\longrightarrow}
H\longrightarrow 1,
\end{equation}
where $B$ is an abelian normal subgroup
of $G$ and $H$ is cyclic. The action of $G$ on $B$ by
conjugation induces a morphism $c:H\to\Aut(B)$. Define
$$H_0:=c^{-1}(\Aut_{(k)}(B))$$
if $k>1$ (see Section \ref{s:more-on-p-groups})
and $H_0:=H$ if $k=1$.
By Lemma \ref{lemma:index-Aut(k)},
\begin{equation}
\label{eq:H:H_0}
[H:H_0]\leq p^{kr^2}\quad\text{if $k>1$,}\qquad\qquad
[H:H_0]=1\quad\text{if $k=1$}.
\end{equation}
Let $G_0=\pi^{-1}(H_0)$. The tuple $(X,G_0,\{1\},E)$ is
$p^k$-admissible in the sense of Section \ref{s:localization}.

\begin{lemma}
\label{lemma:grup-A-0}
There exists an
abelian subgroup $A_0\subseteq B$, such that:
\begin{enumerate}
\item $A_0$ is normal in $G_0$;
\item $X^{A_0}\neq\emptyset$, and there exists some
    cohomology class $\alpha\in \cC_{A_0}(E,N)$, where $N$
    is the normal bundle of the inclusion $X^{A_0}\subseteq
    X$, and an open and closed submanifold
    $$M\subseteq X^{A_0}$$
    such that $\la [M],\alpha\ra$ is not
    divisible by $p^k$;
\item $[B:A_0]\leq p^{r(k-1)}$.
\end{enumerate}
\end{lemma}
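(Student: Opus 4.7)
\textbf{Proof plan for Lemma~\ref{lemma:grup-A-0}.} The plan is to construct $A_0$ by iteratively enlarging an abelian normal subgroup of $G_0$ sitting inside $B$, using Theorem~\ref{thm:admissibility-hereditary} to propagate $p^k$-admissibility at each step. Throughout the iteration I will maintain the invariants that the current $A\subseteq B$ is abelian, normal in $G_0$, $\Aut_{(k)}(B)$-invariant when $k>1$, and that the tuple $(X,G_0,A,E)$ is $p^k$-admissible.

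First I would establish the base case. The tuple $(X,G_0,\{1\},E)$ is $p^k$-admissible: with $A=\{1\}$, the normal bundle of $X^A=X$ is zero, so $\cC_{\{1\}}(E,N)=\cC(E)$ contains the Euler class $\e(E)$; choosing $M=X$, which is $Z_{G_0}(\{1\})=G_0$-invariant and open-and-closed in $X^{\{1\}}=X$, the pairing $\la[X],\e(E)\ra$ fails to be divisible by $p^k$ by the very definition of $k$.

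Next is the inductive step: given $A\subseteq B$ satisfying all the maintained invariants, I want to enlarge it provided $|B/A|>p^{r(k-1)}$. The core task is to exhibit $h\in B$ such that $hA\in B/A$ is $G_0$-invariant and has order $p^k$. When $k>1$, this is precisely the output of Theorem~\ref{thm:element-h-p-k}: it produces $h$ with $hA$ of order $p^k$ and $\Aut_{(k)}(B)$-invariant. Since by definition of $H_0$ the image $c(H_0)\subseteq\Aut(B)$ lies in $\Aut_{(k)}(B)$, and $B$ acts trivially on $B/A$ by abelianness, the $G_0=B\cdot H_0$-invariance of $hA$ follows. When $k=1$ the condition $|B/A|>p^{r(k-1)}=1$ reduces to $A\subsetneq B$, and I would instead invoke Lemma~\ref{lemma:element-h-p} applied to the $p$-subgroup $c(H)\subseteq\Aut(B)$, iterating until $A=B$. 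With $h$ in hand, $h\in B\subseteq Z_{G_0}(A)$, so Theorem~\ref{thm:admissibility-hereditary} applies and produces some $0\leq i<k$ with $A':=\la A,h^{p^i}\ra$ such that $(X,G_0,A',E)$ is still $p^k$-admissible. Then $A'\supsetneq A$ (the order of $h^{p^i}A$ in $B/A$ is $p^{k-i}>1$), $A'$ is abelian since it lies in $B$, and $A'$ is normal in $G_0$ because both $A$ and $hA$ are $G_0$-invariant; when $k>1$, the stronger $\Aut_{(k)}(B)$-invariance of $hA$ supplied by Theorem~\ref{thm:element-h-p-k} ensures that this invariance is also preserved.

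Because $B$ is finite, the chain stabilizes after finitely many steps at some $A_0$ for which the enlargement hypothesis fails, forcing either $[B:A_0]\leq p^{r(k-1)}$ (the termination hypothesis of Theorem~\ref{thm:element-h-p-k} when $k>1$) or $A_0=B$ (when $k=1$, in which case the index bound is trivial). The required conclusions (1), (2), (3) then read off directly: (1) is a maintained invariant, (2) is the unpacking of $p^k$-admissibility of $(X,G_0,A_0,E)$, and (3) is the termination condition. The main subtle point to verify carefully will be that the $\Aut_{(k)}(B)$-invariance hypothesis of Theorem~\ref{thm:element-h-p-k} is genuinely maintained across all iterations, which hinges on the fact that Theorem~\ref{thm:element-h-p-k} delivers this stronger invariance for the new generator $h$, not merely the weaker $c(H_0)$-invariance that $G_0$-normality would require.
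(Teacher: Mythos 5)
Your proposal is correct and follows essentially the same route as the paper: the paper's (very terse) proof takes a maximal element of the set of subgroups of $B$ satisfying (1) and (2), starting from the $p^k$-admissible tuple $(X,G_0,\{1\},E)$, and derives (3) from Theorem \ref{thm:element-h-p-k} together with Theorem \ref{thm:admissibility-hereditary} when $k>1$, and from Lemma \ref{lemma:element-h-p} (with $\Phi$ the adjoint action of $G_0$) together with Theorem \ref{thm:admissibility-hereditary} when $k=1$. Your iterative formulation, including the observation that the $\Aut_{(k)}(B)$-invariance delivered by Theorem \ref{thm:element-h-p-k} is what keeps that theorem applicable at each step and simultaneously yields $G_0$-normality via $c(H_0)\subseteq\Aut_{(k)}(B)$, is exactly the content the paper leaves implicit.
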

\begin{proof}
Let $\aA_0$ be the set of subgroups of $B$ satisfying (1) and (2).
We have $\{1\}\in\aA_0$. We claim that any maximal element of $\aA_0$ satisfies
(3). To prove this we consider two cases. If $k>1$ then this follows from
Theorems \ref{thm:element-h-p-k} and \ref{thm:admissibility-hereditary}.
If $k=1$ it follows from Lemma
\ref{lemma:element-h-p} (with $\Phi$ given by the adjoint action of $G_0$)
and Theorem \ref{thm:admissibility-hereditary}.
\end{proof}

Let $A_0\subseteq B$ be one of the subgroups given by Lemma \ref{lemma:grup-A-0}.
Since $A_0$ is normal in $G_0$, the action of $G_0$
on $X$ preserves $X^{A_0}$. Let $G_1\subseteq G_0$ be the
subgroup consisting of those elements that preserve each connected component of $X^{A_0}$
(in particular, $G_1$ preserves $M$). By Lemma
\ref{lemma:cohom-no-augmenta} we have
$$|\pi_0(X^{A_0})|\leq b(X;\FF_p):=\sum_j b_j(X;\FF_p),$$
so
\begin{equation}
\label{eq:G_0:G_1}
[G_0:G_1]\leq b(X;\FF_p)!.
\end{equation}
Define the following sets of characters of $A_0$:
$$\Xi_E=\{\xi\in\wh{A_0}\mid E^{A_0,\xi}|_M\neq 0\},
\qquad
\Xi_N=\{\xi\in\wh{A_0}\mid N^{A_0,\xi}|_M\neq 0\}.$$
Let
$$m=\dim_{\RR}X=\rk_{\RR}E.$$
Since the decomposition of the fibers of $E|_M$ and $N|_M$
in irreducible representations of $A_0$ is locally constant
(because $A_0$ is finite), we may bound
$$|\Xi_E|\leq \frac{\rk_{\RR}(E)}{2}|\pi_0(M)|\leq \frac{m}{2}\cdot b(X;\FF_p),$$
and $|\Xi_N|$ can be bounded above by the same quantity. Since
$A_0$ is a normal subgroup of $G_1$, conjugation in $G_1$
induces an action of $G_1$ on $\wh{A_0}$ which preserves the
set $\Xi_E\cup\Xi_N$ because the action of $A_0$ on $E$ and $N$
extends to an action of $G_1$. Let $G_2\subseteq G_1$ be the
subgroup fixing each element of $\Xi_E\cup\Xi_N$. We have
\begin{equation}
\label{eq:G_1:G_2}
[G_1:G_2]\leq (m\cdot b(X;\FF_p))!.
\end{equation}
It follows from the definition that for any $\xi\in\Xi_E$ the action of
$G_2$ on $E|_M$ preserves the subbundle $E^{A_0,\xi}|_M\subseteq E|_M$,
and the same holds replacing $E$ by $N$.

Now let
$$\{V_1,\dots,V_r\}=\{E^{A_0,\xi}|_M\}_{\xi\in\Xi_E}\cup
\{N^{A_0,\xi}\}_{\xi\in\Xi_N}.$$ For each $j$ the action of
$G_2$ on $M$ lifts to an action on $V_j$, and we may identify
$\alpha$ with an element of $\cC(V_1,\dots,V_r)$. Let $\gamma$ be a
generator of the cyclic subgroup $H_2:=\pi(G_2)\subseteq H_0$,
let $g\in\pi^{-1}(\gamma)$ be any element, and let $K=\la
g\ra\subseteq G_2$, so that $\pi(K)=H_2$. Then
$(M,K,\{1\},V_1,\dots,V_r)$ is a $p^k$-admissible tuple.

Arguing similarly as in the proof of Lemma
\ref{lemma:grup-A-0}, using Theorems
\ref{thm:admissibility-hereditary} and \ref{thm:element-h-p-k}
if $k>1$, and Theorem \ref{thm:admissibility-hereditary} and
Lemma \ref{lemma:element-h-p} (with $\Phi=\{1\}$) if $k=1$, we
deduce the existence of an abelian subgroup $A_1\subseteq K$
satisfying $[K:A_1]\leq p^{k-1}$ and $M^{A_1}\neq\emptyset$.

Let $\Gamma=\la A_0,A_1\ra$. We have
$[G:\Gamma]=[B:B\cap\Gamma][H:\pi(\Gamma)]$. On the other hand,
$$[B:B\cap\Gamma]\leq [B:A_0]\leq p^{r(k-1)},$$
because $A_0\subseteq\Gamma$ and (3) in Lemma \ref{lemma:grup-A-0}. We have
$$[H_2:\pi(\Gamma)\cap H_2]\leq [K:A_1]$$
because $\pi:K\to H_2$ is a surjection and $\pi(A_1)\subseteq
\pi(\Gamma)\cap H_2$. Consequently if $k>1$ we may bound, using
in particular the first part of (\ref{eq:H:H_0}),
\begin{align*}
[H:\pi(\Gamma)] &\leq [H:\pi(\Gamma)\cap H_2] \\
&=[H:H_2][H_2:\pi(\Gamma)\cap H_2] \\
&\leq [H:H_0][H_0:H_2][K:A_1] \\
&\leq p^{kr^2}[G_0:G_2] p^{k-1} \\
&\leq p^{kr^2} b(X;\FF_p)!(m\cdot b(X;\FF_p))! p^{k-1},
\end{align*}
and hence $$[G:\Gamma]\leq p^{r(k-1)}p^{kr^2}
b(X;\FF_p)!(m\cdot b(X;\FF_p))! p^{k-1}.$$ If $k=0$ similar
arguments using the second part of (\ref{eq:H:H_0}) yield
$$[G:\Gamma]\leq b(X;\FF_p)!(m\cdot b(X;\FF_p))!.$$
Define $C_0'=1$ if
$\la[X],\e(E)\ra=1$, and if $\la[X],\e(E)\ra\neq 1$ define
$$C_0'=\sup_p \{p^{r(k-1)}p^{kr^2}
b(X;\FF_p)!(m\cdot b(X;\FF_p))! p^{k-1}\mid p^{k-1} \text{ divides
} \la[X],\e(E)\ra,\,k>1\},$$
where the supremum runs over the set of prime numbers.
This number is finite because the set of primes $p$ dividing $\la[X],\e(E)\ra$ is finite.
Define also
$$C''_0=\sup_p\{b(X;\FF_p)!(m\cdot
b(X;\FF_p))!\mid p\text{ does not divide $\la[X],\e(E)\ra$}\}.$$
This is also clearly finite, because $b(X;\FF_p)$ can be bounded above independently of $p$.
Both $C_0'$ and $C_0''$ are bounded above by constants depending only on $H^*(X)$ and
$\la[X],\e(E)\ra$ (recall that $m=\dim X$, so it can be recovered from $H^*(X)$).
Let
$$C_0=\max\{C_0',C_0''\}.$$
We have proved that any prime $p$ and any finite $p$-group $G$
acting on $E$ and sitting in an exact sequence as in
(\ref{eq:1-B-G-H-1}) there is a subgroup $\Gamma\subset G$ such
that $X^{\Gamma}\neq\emptyset$ and $[G:\Gamma]\leq C_0$. By
Corollary \ref{cor:Jordan-classic}, $\Gamma$ has an abelian
subgroup $A\subseteq\Gamma$ satisfying
$[\Gamma:A]\leq\Jor_{2m}$.
So, setting
$$C_1=\Jor_{2m}C_0,$$
we have proved that $G$ has an abelian subgroup $A$ of index
$[G:A]\leq C_1$ satisfying $X^A\neq\emptyset$.

\noindent{\bf Step 2.} Now suppose $G$ is arbitrary.
We claim that there exists some abelian subgroup $A_0\subseteq G$ satisfying
$[G:A_0]\leq C_1^{18r^5}$. Otherwise, Theorem \ref{thm:cyclic-by-abelian}
would imply the existence of subgroups $B\unlhd\Gamma\subseteq G$ with
$B$ abelian, $\Gamma/B$ cyclic, and $\alpha(\Gamma)>C_1$. However, the group
$\Gamma$ has the form considered in Step 1, so by the previous results we
must have $\alpha(\Gamma)\leq C_1$, a contradiction.

Now, if $A_0\subseteq G$ is an abelian subgroup, then by the result of
Step 1 there is a subgroup $A\subseteq A_0$ satisfying $X^A\neq\emptyset$
and $[A_0:A]\leq C_1$, so $[G:A]\leq C:=C_1^{1+18r^5}$.

\section{Lifting finite group actions to line bundles}
\label{s:lifting-actions-line-bundle}

In this section we prove Theorem \ref{thm:liftings}. Actually we divide
it in in three different statements: Theorems \ref{thm:lift-action-line-bundle},
\ref{thm:lift-Ham-action-line-bundle} and \ref{thm:lift-commutator-action-line-bundle}.

\subsection{The case of vanishing first Betti number}

\begin{theorem}
\label{thm:lift-action-line-bundle} Let $X$ be a connected,
smooth manifold satisfying $b_1(X)=0$ and let $L\to X$ be a
complex line bundle. Suppose that $G\subset\Diff(X)$ is a finite
subgroup satisfying $g^*L\simeq L$ for
every $g\in G$. Then there exists a finite group $G'$
sitting in a short exact sequence
$$1\to H\to G'\to G\to 1,$$
where $H$ is finite cyclic and $|H|=|G|$, and a smooth action of $G'$ on $L$ by
bundle automorphisms lifting the action of $G$ on $X$.
\end{theorem}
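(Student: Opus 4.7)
The plan is to realize $G'$ as a finite subgroup of the group of unitary bundle automorphisms of $L$, after reducing the problem to the surjectivity of a map in group cohomology. Begin by averaging an arbitrary Hermitian metric over $G$ to obtain a $G$-invariant Hermitian metric on $L$, and let $\uU := C^\infty(X,\U(1))$ denote the group of unitary gauge transformations. Since $g^\ast L \simeq L$ for every $g \in G$, one can choose for each $g$ a bundle automorphism $\tilde g$ of $L$ covering $g$ which is unitary with respect to the invariant metric. The failure of $g \mapsto \tilde g$ to be a homomorphism is then encoded in a $2$-cocycle $c \in Z^2(G,\uU)$ defined by $\tilde g\,\tilde h = c(g,h)\,\tilde{gh}$, where $G$ acts on $\uU$ by $(g\cdot u)(x) = u(g^{-1}(x))$. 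Different choices of unitary lifts change $c$ by a coboundary, so the invariant datum is the class $[c] \in H^2(G,\uU)$.

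The core of the proof is to show that $[c]$ lies in the image of the natural map $\iota_\ast : H^2(G,\mu_n) \to H^2(G,\uU)$, where $n := |G|$ and $\mu_n \hookrightarrow \uU$ is the inclusion of constant functions. Consider the two $G$-equivariant short exact sequences
\begin{equation*}
0 \to \ZZ \xrightarrow{\cdot n} \ZZ \to \mu_n \to 0,\qquad
0 \to \ZZ \to C^\infty(X,\RR) \xrightarrow{\exp(2\pi\imag\,\cdot)} \uU \to 0.
\end{equation*}
The second sequence is exact because the hypothesis $b_1(X)=0$ together with the universal coefficient theorem force $H^1(X;\ZZ)=0$, so every smooth map $X \to \U(1)$ admits a smooth real logarithm, and the kernel of $\exp$ consists of the constant integer-valued functions because $X$ is connected. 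The inclusion $\tfrac{1}{n}\ZZ \hookrightarrow C^\infty(X,\RR)$ of constants extends to a morphism from the first sequence to the second, and naturality of the connecting maps yields a commutative square relating $\delta_n : H^2(G,\mu_n) \to H^3(G,\ZZ)$ to $\delta : H^2(G,\uU) \to H^3(G,\ZZ)$. Because $C^\infty(X,\RR)$ is a $\QQ$-vector space and $G$ is finite, $H^i(G,C^\infty(X,\RR))=0$ for all $i\geq 1$, so $\delta$ is an isomorphism; and because $|G|=n$ annihilates $H^3(G,\ZZ)$, the long exact sequence associated to the first displayed sequence shows that $\delta_n$ is surjective. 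Combining these facts shows that $\iota_\ast$ is surjective, so $[c]$ is represented by a cocycle taking values in $\mu_n$.

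To conclude, adjust the $\tilde g$ by elements of $\uU$ so that the resulting cocycle $c$ actually takes values in $\mu_n \subset \uU$; this is possible by the previous step. Then the set
\begin{equation*}
G' := \{\zeta\cdot\tilde g \mid \zeta \in \mu_n,\ g \in G\}
\end{equation*}
is closed under composition inside the group of unitary bundle automorphisms of $L$, is finite of order $n\cdot|G|$, and sits in a central short exact sequence $1 \to \mu_n \to G' \to G \to 1$ with $\mu_n$ cyclic of order $|G|$; its tautological action on $L$ by bundle automorphisms lifts the given action of $G$ on $X$. I expect the main obstacle to be the surjectivity of $\iota_\ast$; this is precisely where the hypothesis $b_1(X)=0$ plays its role, via the surjectivity of $\exp : C^\infty(X,\RR) \to \uU$, reducing an a priori infinite-dimensional obstruction to an elementary computation with $H^3(G,\ZZ)$.
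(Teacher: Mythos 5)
Your argument is correct, but it does not follow the route the paper takes for this particular statement. The paper's proof of Theorem \ref{thm:lift-action-line-bundle} (supplied by the referee) is geometric: it forms $V=\bigoplus_{g\in G}(g^{-1})^*L$, uses the permutation action on $\det V\simeq L^{\otimes d}$ to get a genuine $G$-action on $L^{\otimes d}$, and then extracts $d$-th roots of the resulting fiber isomorphisms; the hypothesis $b_1(X)=0$ enters only to trivialize the monodromy of the $\mu_d$-bundles $I_g\to X$ of such roots. What you do instead is essentially the paper's proof of the more general criterion, Theorem \ref{thm:existence-lifting} (which the paper explicitly notes subsumes Theorem \ref{thm:lift-action-line-bundle}): you form the obstruction cocycle in $H^2(G,\cC^{\infty}(X,S^1))$ and reduce it to a $\mu_n$-valued one. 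Your packaging is in fact cleaner than the paper's: where the paper first kills the $\RR$-valued part of the cocycle by hand (projecting onto $\cC^{\infty}_0(X,\RR)$ and using $H^2(G,\cC^{\infty}_0(X,\RR))=0$) and only then invokes the surjectivity of $H^2(G,\mu_d)\to H^2(G,S^1)$ (Lemma \ref{lemma:roots-of-unity}), you prove in one stroke that $H^2(G,\mu_n)\to H^2(G,\cC^{\infty}(X,S^1))$ is surjective, by comparing the connecting maps to $H^3(G,\ZZ)$ for the two exact sequences $0\to\ZZ\to\frac{1}{n}\ZZ\to\mu_n\to 0$ and $0\to\ZZ\to\cC^{\infty}(X,\RR)\to\cC^{\infty}(X,S^1)\to 0$; the hypothesis $b_1(X)=0$ is used exactly where it should be, namely to make the second sequence exact on the right. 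The trade-off is that the referee's proof is elementary and avoids group cohomology entirely, while yours (like Theorem \ref{thm:existence-lifting}) generalizes to the case $b_1(X)\neq 0$, where the obstruction $b(L)\in H^2(G,H^1(X;\ZZ))$ appears. One cosmetic remark: your opening step of ``averaging a Hermitian metric over $G$'' is not well posed, since $G$ does not yet act on $L$; but it is also unnecessary --- for any fixed Hermitian metric, any bundle map $L\to L$ covering $g$ can be rescaled fiberwise by a positive function to become unitary, which is all your construction of the lifts $\tilde g$ requires, and is exactly what the paper does before Theorem \ref{thm:existence-lifting}.
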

\begin{proof}
The following proof was kindly provided by the referee.
Consider the complex vector bundle
$$V=\bigoplus_{g\in G}(g^{-1})^*L.$$
We can identify the fiber of $V$ over $x$ with
$V_x=\{v:G\to L\mid v(g)\in L_{g^{-1}x}\}.$
Define an action of $G$ on $V$ lifting the action on $X$ as follows:
if $v\in V_x$ and $\gamma\in G$ then $\gamma v:G\to L$ is the map given by
$\gamma v(g):=v(\gamma^{-1}g)$, so that $\gamma v\in V_{\gamma x}$.
The action of $G$ on $V$ induces an action on
$\det V=\bigotimes_{g\in G}(g^{-1})^*L.$
Choosing for every $g\in G$ an isomorphism $(g^{-1})^*L\simeq L$
we obtain an isomorphism $\det V\simeq L^{\otimes d}$, where $d=|G|$.
We have thus defined an action of $G$ on $L^{\otimes d}$ lifting the action on $X$.
This action is encoded in the assignment to
every $(g,x)\in G\times X$ of an isomorphism
$\phi_{g,x}:L_x^{\otimes d}\stackrel{\simeq}{\longrightarrow} L_{gx}^{\otimes d}$ varying continuously with $x$.
Let $I_{g,x}$ denote the set of the $d$ different isomorphisms $\psi:L_x\to L_{gx}$ satisfying
$\psi^{\otimes d}=\phi_{g,x}$. Then
$$I_g=\bigcup_{x\in X}I_{g,x}$$
is a subset of the (total space of the) line bundle $L^{\vee}\otimes g^*L$.
Endowing $I_g$ with the subspace topology, the projection $I_g\to X$ becomes a principal $\mu_d$-bundle, where $\mu_d\subset S^1$ is the group of $d$-th roots of unity. Its topology is
therefore encoded in a monodromy morphism $\pi_1(X)\to\mu_d$. Since by assumption $b_1(X)=0$
the monodromy is trivial, so $I_g\to X$ can be trivialized. Now let
$$G'=\{(g,s)\mid g\in G,\,s:X\to I_g\text{ a section of $I_g\to X$}\}$$
and consider the action of $G'$ on $L$ defined as follows: if $g'=(g,s)\in G'$ and $x\in X$, then
the action of $g'$ sends $L_x$ to $L_{gx}$ via the map $s(x)$. This defines an injective map
$G'\to\Diff(L)$ with identifies $G'$ with a subgroup of $\Diff(L)$ and hence endows $G'$ with
a group structure. Clearly,
the projection $G'\ni (g,s)\mapsto g\in G$ is a surjective morphism
of groups with respect to which the action of $G'$ on $L$ lifts the action of $G$ on $X$,
and the kernel of $G'\to G$ can be identified with $\mu_d$ because $X$ is connected.
\end{proof}

\subsection{A criterion for existence of actions of central extensions on line bundles}

Recall (see e.g.
\cite[\S III.1]{B}) that for a group $G$ and a left
$G$-module $M$ the bar complex of $G$ with coefficients in $M$
has $n$-th term $C^n(G,M)=\Map(G^n,M)$ and the coboundary map
$\delta:C^{n-1}(G,M)\to C^n(G,M)$ sends $f:G^{n-1}\to M$ to the
map $\delta f:G^n\to M$ defined as
\begin{equation}
\label{eq:cocycle}
(\delta f)(g_1,\dots,g_n)=g_1f(g_2,\dots,g_n)-f(g_1g_2,\dots,g_n)+\dots
+(-1)^nf(g_1,\dots,g_{n-1}).
\end{equation}
Then $H^*(G,M)$ is the cohomology of the complex
$(C^*(G,M),\delta)$.
If $N$ is another $G$-module then
any morphism $\phi:M\to N$ of $G$-modules induces a morphism in
cohomology $\phi_*:H^*(G,M)\to H^*(G,N)$.

Assume that $X$ is a connected
smooth manifold,
$G\subset\Diff(X)$ is a finite subgroup, and
$L\to X$ is a complex line
bundle satisfying $L\simeq g^*L$ for every $g\in G$. Take
a Hermitian metric $|\cdot|$ on $L$.

Choose for every $g\in G$ a bundle automorphism $\beta_g:L\to
L$ lifting $g^{-1}$ and satisfying
$|\beta_g(\lambda)|=|\lambda|$ for every $\lambda\in L$ (that
$\beta_g$ exists is a consequence of the assumption $L\simeq
(g^{-1})^*L$). We call $\beta=(\beta_g)_{g\in G}$ a {\it
set of lifts}.

For every $g_1,g_2\in G$ the composition
$\beta_{g_1g_2}^{-1}\beta_{g_2}\beta_{g_1}$ is a bundle
automorphism of $L$ lifting the identity and preserving the
Hermitian metric, and hence can be identified with pointwise
multiplication by a function
$f_{\beta}(g_1,g_2)\in\cC^{\infty}(X,S^1)$. To simplify the
notation we use this convention: if $c\in\cC^{\infty}(X,S^1)$
is any map, we denote by the same symbol $c$ the map $L\to L$
given by pointwise multiplication by $c$. For example, we write
$\beta_{g_1g_2}^{-1}\beta_{g_2}\beta_{g_1}=
f_{\beta}(g_1,g_2)$.
Consider the $G$-action on $\cC^{\infty}(X,S^1)$ defined as
$$(g\cdot\phi)(x)=\phi(g^{-1}\cdot x)$$
for any $g\in G$, $\phi\in \cC^{\infty}(X,S^1)$ and $x\in X$. For any
$c\in\cC^{\infty}(X,S^1)$ we have
\begin{equation}
\label{eq:conjugation-L}
\beta_{g}^{-1}c\beta_{g}=g \cdot c.
\end{equation}

Clearly, the set of lifts $\beta$ defines a lift of the action
of $G$ to $L$ (for which the action of $g$ is given by the map
$\beta_{g^{-1}}$) if and only if $f_{\beta}(g_1,g_2)\equiv 1$
for every $g_1,g_2$. The following identity shows that
$f_{\beta}$ is cocycle in $C^2(G,\cC^{\infty}(X,S^1))$ (we use multiplicative notation
for cochains with values in $\cC^{\infty}(X,S^1)$, instead of
additive notation as in (\ref{eq:cocycle})):
$$\overbrace{\beta_{g_1g_2g_3}^{-1}\beta_{g_3}\beta_{g_1g_2}}^{f_{\beta}(g_1g_2,g_3)}
\overbrace{\beta_{g_1g_2}^{-1}\beta_{g_2}\beta_{g_1}}^{f_{\beta}(g_1,g_2)}=
\beta_{g_1g_2g_3}^{-1}\beta_{g_3}\beta_{g_2}\beta_{g_1}=
\overbrace{\beta_{g_1g_2g_3}^{-1}\beta_{g_2g_3}\beta_{g_1}}^{f_{\beta}(g_1,g_2g_3)}
\overbrace{\beta_{g_1}^{-1}\beta_{g_2g_3}^{-1}\beta_{g_3}\beta_{g_2}\beta_{g_1}}^{g_1\cdot f_{\beta}(g_2,g_3)}.$$

If $\beta'=(\beta'_g=\beta_gc_g)_{g\in G}$ is another set of lifts (with $c_g\in\cC^{\infty}(X,S^1)$) then
\begin{align*}
f_{\beta'}(g_1,g_2) &=
{\beta'}_{g_1g_2}^{-1}\beta'_{g_2}\beta'_{g_1}
= c_{g_1g_2}^{-1}\beta_{g_1g_2}^{-1}\beta_{g_2}c_{g_2}\beta_{g_1}c_{g_1} \\
&= \beta_{g_1g_2}^{-1}\beta_{g_2}\beta_{g_1}(g_1\cdot c_{g_2})c_{g_1}, \qquad\qquad\text{by (\ref{eq:conjugation-L}),} \\
&= \beta_{g_1g_2}^{-1}\beta_{g_2}\beta_{g_1}c_{g_1g_2}^{-1}(g_1\cdot c_{g_2})c_{g_1}, \qquad\text{because
$\beta_{g_1g_2}^{-1}\beta_{g_2}\beta_{g_1}$ lifts the identity on $X$}, \\
&= f_{\beta}(g_1,g_2)c_{g_1g_2}^{-1}(g_1\cdot c_{g_2})c_{g_1}.
\end{align*}
So if $c\in C^1(G,\cC^{\infty}(X,S^1))$ is the cochain defined
as $c(g)=c_g$, then $f_{\beta'}=f_{\beta}\,\delta c$
(multiplicative notation!), so $f_{\beta}$ and $f_{\beta'}$ are
cohomologous.

It follows that the cohomology class
$[f_{\beta}]\in H^2(G,\cC^{\infty}(X,S^1))$ is independent of
the choice of $\beta$, and that the action of $G$ on $X$ lifts
to an action on $L$ if and only if $[f_{\beta}]=0$.
Now define
$$b(L)\in H^2(G,H^1(X;\ZZ))$$
to be the image of $[f_{\beta}]$ under the map in cohomology induced by
passing to homotopy classes $\cC^{\infty}(X,S^1)\to [X,S^1]=H^1(X;\ZZ)$, where
$H^1(X;\ZZ)$ is endowed with the action of $G$ inherited from that on $\cC^{\infty}(X,S^1)$.

\begin{theorem}
\label{thm:existence-lifting}
Then there exists a finite group $G'$
sitting in a short exact sequence
$$1\to H\to G'\to G\to 1,$$
where $H$ is finite cyclic, and a smooth action of $G'$ on $L$ by
bundle automorphisms lifting the action of $G$ on $X$, if and only if $b(L)=0$.
Furthermore, if $G'$ exists, then it can be chosen so that
$|H|=|G|$.
\end{theorem}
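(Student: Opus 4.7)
The necessity of $b(L) = 0$ is immediate. Given such $G'$ and a set-theoretic section $s\colon G \to G'$, let $\beta_g$ be the unitary bundle automorphism of $L$ corresponding to $s(g)^{-1}$. Then $\beta_{g_1g_2}^{-1}\beta_{g_2}\beta_{g_1}$ corresponds to $s(g_1g_2)s(g_2)^{-1}s(g_1)^{-1} \in H$, and since $H$ is finite cyclic each of its elements acts on $L$ over the identity of $X$ as multiplication by a continuous map $X \to \mu_{|H|}$; connectedness of $X$ forces this map to be a constant root of unity. Hence $f_\beta$ takes values in $\mu_{|H|} \subset \cC^{\infty}(X,S^1)$, whose image in $H^1(X;\ZZ)$ is zero, giving $b(L) = 0$.

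For the converse, write $E = \cC^{\infty}(X,S^1)$ and let $E_0 = \exp(2\pi\imag\,\cC^{\infty}(X,\RR)) \subseteq E$ be the subgroup of null-homotopic maps, which equals the kernel of the natural surjection $E \to H^1(X;\ZZ)$. The hypothesis $b(L) = 0$ says precisely that the cocycle $\bar f \in Z^2(G,H^1(X;\ZZ))$ induced by $f_\beta$ is a coboundary $\delta\bar c$; lifting $\bar c$ termwise to a cochain $c\colon G \to E$ (possible because $E \to H^1(X;\ZZ)$ is surjective and $G$ is finite) and modifying $\beta$ accordingly via the formula $\beta'_g = \beta_g c(g)^{-1}$, we may assume $f_\beta$ takes values in $E_0$.

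Next, consider the two short exact sequences of $G$-modules
$$0 \to \ZZ \to \cC^{\infty}(X,\RR) \to E_0 \to 0 \qquad \text{and} \qquad 0 \to \mu_{|G|} \to E_0 \xrightarrow{(-)^{|G|}} E_0 \to 0.$$
Exactness of the second uses divisibility of $E_0$ (any $\exp(2\pi\imag\tilde f) \in E_0$ has $|G|$-th root $\exp(2\pi\imag\tilde f/|G|) \in E_0$) together with connectedness of $X$, which forces a continuous map into $\mu_{|G|}$ to be a constant. Since $\cC^{\infty}(X,\RR)$ is a $\QQ$-vector space, $H^i(G,\cC^{\infty}(X,\RR)) = 0$ for $i \geq 1$, so the first sequence yields $H^2(G, E_0) \simeq H^3(G,\ZZ)$, a group annihilated by $|G|$. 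The long exact sequence of the second then forces $H^2(G,\mu_{|G|}) \to H^2(G, E_0)$ to be surjective; pick $[\eta] \in H^2(G,\mu_{|G|})$ mapping to $[f_\beta]$, and perform a further coboundary modification of $\beta$ (built from a smooth cochain witnessing that the inclusion of $\eta$ in $C^2(G,E)$ is cohomologous to $f_\beta$) to arrange that $f_\beta$ equals $\eta$ pointwise, hence takes values in the constants $\mu_{|G|} \subset S^1$.

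The desired central extension is then $G' = \mu_{|G|} \times_\eta G$, of order $|G|^2$, with $H = \mu_{|G|}$ cyclic of order $|G|$, acting on $L$ by $(z,g) \cdot \lambda = z\,\beta_{g^{-1}}(\lambda)$; associativity of this action translates directly into the cocycle identity satisfied by $\eta$, and smoothness is preserved throughout because all cochains and modifications live in $\cC^{\infty}(X,S^1)$. The main technical step is the analysis of the $|G|$-th power sequence on $E_0$ and the identification $H^2(G,E_0) \simeq H^3(G,\ZZ)$: it is precisely the divisibility of $\cC^{\infty}(X,\RR)$, coupled with the annihilation of finite-group cohomology by the group order, that forces the lifting obstruction to land in a cyclic group of order exactly $|G|$ and so produces the asserted central extension.
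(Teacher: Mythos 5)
Your proposal is correct and follows essentially the same route as the paper: reduce $f_\beta$ to a null-homotopic-valued cocycle using $b(L)=0$, then use the vanishing of $H^{>0}(G,\cC^{\infty}(X,\RR))$ together with the fact that $H^*(G,\ZZ)$ is annihilated by $|G|$ to push $f_\beta$ down to a constant $\mu_{|G|}$-valued cocycle, and finally read off the central extension. The only differences are organizational: you package the two reduction steps as long exact sequences of the $G$-modules $0\to\ZZ\to\cC^{\infty}(X,\RR)\to E_0\to 0$ and the $|G|$-th power sequence on $E_0$ (which is a slightly cleaner way to handle the ``cocycle up to constant integers'' issue that the paper treats with the explicit projection $\Pi$ and its Lemma on $H^k(G,\mu_d)\twoheadrightarrow H^k(G,S^1)$), and you realize $G'$ as an abstract extension $\mu_{|G|}\times_\eta G$ rather than as the subgroup of $\Diff(L)$ generated by the lifts.
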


Note that Theorem \ref{thm:existence-lifting} includes Theorem \ref{thm:lift-action-line-bundle} as a particular case.

Although we will not use this fact here, it is worth pointing out that
$b(L)$ may be interpreted as the image of $c_1(L)$ via the
differential
$$d_2^{0,2}:H^2(X;\ZZ)^G\to H^2(G,H^1(X;\ZZ))=H^2(BG,H^1(X;\ZZ))$$
in the Serre spectral sequence for the fibration $X_G\to BG$. Hence
the vanishing of $b(L)$ is the first obstruction to lifting $c_1(L)$ to
an equivariant cohomology class. The second and last obstruction is the
vanishing of the map $d_3^{0,2}:\Ker b\to H^3(G,\ZZ)$, which also admits a geometric
interpretation: if $b(L)=0$, then $d_3^{0,2}(c_1(L))\in H^3(G,\ZZ)\simeq H^2(G,S^1)$
defines a central extension $1\to S^1\to \GG\to G\to 1$, and the group $G'$ in
Theorem \ref{thm:existence-lifting} can be identified with a subgroup of $\GG$.

\begin{proof}
Suppose that there is an extension $1\to H\to G'\to G\to 1$ with the properties in the
statement of the theorem, and that $G'$ acts on $L$ lifting the action of $G$ on $X$. Then
the action of $H$ on $L$ is necessarily given by constant maps $X\to S^1$.
Choose for every $g\in G$ a lift $g'\in G'$ and let $\beta_g:L\to L$ be
the map given by the action of $(g')^{-1}$. Then $\beta_{g_1g_2}^{-1}\beta_{g_2}\beta_{g_1}$
is given by the action of some element of $H$, and hence is a constant map.
It follows that $b(L)=0$.

Now assume, conversely, that $b(L)=0$. It follows from the discussion before the theorem
that we may choose a set of lifts $\beta=(\beta_g)$ such that $\beta_{g_1g_2}^{-1}\beta_{g_2}\beta_{g_1}$ is homotopic to a constant map for every $g_1,g_2$.
This is equivalent to the existence of
a function
$\phi_{\beta}(g_1,g_2)\in\cC^{\infty}(X,\RR)$
such that
$$f_{\beta}(g_1,g_2)=\exp(2\pi\imag \phi_{\beta}(g_1,g_2)).$$

Since $X$ is connected, the cocycle condition $\delta f_{\beta}=0$
implies that
\begin{equation}
\label{eq:almost-zero}
g_1\cdot\phi_{\beta}(g_2,g_3)-\phi_{\beta}(g_1g_2,g_3)+\phi_{\beta}(g_1,g_2g_3)-\phi_{\beta}(g_1,g_2)\quad\text{is
a constant integer.}
\end{equation}
Since this integer need not be zero, in general the cochain
$\phi_{\beta}$ defined by the functions $\phi_{\beta}(g_1,g_2)$
is not a cocycle in $C^2(G,\cC^{\infty}(X,\RR))$.

Let $x_0\in X$ be any point and define
$$\cC_0^{\infty}(X,\RR)=\{f\in\cC^{\infty}(X,\RR)\mid \sum_{g\in G}f(g\cdot
x_0)=0\}.$$ Consider the linear projection
$\Pi:\cC^{\infty}(X,\RR)\to \cC_0^{\infty}(X,\RR)$ defined as
$$\Pi(f)=f-\frac{1}{|G|}\sum_{g\in G}f(g\cdot x_0).$$
The subspace $\cC_0^{\infty}(X,\RR)\subset \cC^{\infty}(X,\RR)$
is $G$-invariant, the map $\Pi$ is $G$-equivariant, and the
kernel of $\Pi$ consists of the constant functions. So if we
define $\phi_{\beta}^0(g_1,g_2):=\Pi(\phi_{\beta}(g_1,g_2))$
then (\ref{eq:almost-zero}) implies that
$$g_1\cdot\phi_{\beta}^0(g_2,g_3)-\phi_{\beta}^0(g_1g_2,g_3)+\phi_{\beta}^0(g_1,g_2g_3)-\phi_{\beta}^0(g_1,g_2)=0,$$
i.e., $\phi_{\beta}^0$ is a cocycle in
$C^2(G,\cC^{\infty}_0(X,\RR))$. Since $\cC^{\infty}_0(X,\RR)$
is an $\RR$-vector space and $G$ is finite,
$H^*(G,\cC^{\infty}_0(X,\RR))=0$ (see e.g. \cite[Ch. III, Cor.
10.2]{B}). It follows that there exists some $\kappa\in
C^1(G,\cC^{\infty}_0(X,\RR))$ such that $\phi_{\beta}^0=\delta
\kappa$. So if we replace each
$\beta_g$ by $\beta_g\exp(-2\pi\imag \kappa(g))$ then the new
set of lifts, which we still denote by $\beta=(\beta_g)_{g\in
G}$, has the property that $f_{\beta}(g_1,g_2)$ is a constant
function $X\to S^1$ for every $g_1,g_2\in G$. In other words,
we may now regard $f_{\beta}$ as defining a cocycle in
$C^2(G,S^1)$.

\begin{lemma}
\label{lemma:roots-of-unity} Let $d=|G|$ and let $\mu_d\subset
S^1$ be the subgroup of $d$-th roots of unity. We regard
$\mu_d$ as a $G$-module with trivial $G$-action. The inclusion
$\mu_d\subset S^1$ induces for every $k>0$ a surjective map
$H^k(G,\mu_d)\exh H^k(G,S^1)$.
\end{lemma}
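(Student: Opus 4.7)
The plan is to exploit the divisibility of $S^1$. Since $S^1$ is a divisible abelian group, the $d$-th power homomorphism $S^1 \to S^1$, $z \mapsto z^d$, is surjective with kernel $\mu_d$, giving a short exact sequence of trivial $G$-modules
$$1 \longrightarrow \mu_d \longrightarrow S^1 \stackrel{(\cdot)^d}{\longrightarrow} S^1 \longrightarrow 1.$$

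The associated long exact sequence in group cohomology contains the piece
$$H^k(G,\mu_d) \longrightarrow H^k(G,S^1) \stackrel{m_d}{\longrightarrow} H^k(G,S^1),$$
where $m_d$ is the map induced on cohomology by the $d$-th power endomorphism of $S^1$. Because the $d$-th power map on $S^1$ is the same as multiplication by $d$ when $S^1$ is written additively as $\RR/\ZZ$, the induced map $m_d$ is multiplication by $d$ on $H^k(G,S^1)$.

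Here I would invoke the standard fact (see e.g.\ \cite[Ch.\ III, Cor.\ 10.2]{B}) that for any finite group $G$ and any $G$-module $M$, the group $H^k(G,M)$ is annihilated by $|G|$ for every $k>0$. Applied to $M=S^1$ and $|G|=d$, this shows that $m_d$ is the zero map on $H^k(G,S^1)$ for $k>0$. Exactness then forces $H^k(G,\mu_d) \to H^k(G,S^1)$ to be surjective, which is the claim.

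There is essentially no obstacle here: the argument is a direct application of the long exact sequence together with the annihilation theorem, and the only ``input'' beyond formal homological algebra is divisibility of $S^1$. The only thing to double-check is the identification of the endomorphism of $H^k(G,S^1)$ induced by $z\mapsto z^d$ with multiplication by $d$, which is immediate from functoriality of $H^k(G,-)$ in the coefficient module and the fact that $z\mapsto z^d$ is (additively) multiplication by $d$ on the abelian group $S^1$.
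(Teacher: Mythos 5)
Your proof is correct, but it takes a different route from the one in the paper. You use the Kummer-type sequence $1\to\mu_d\to S^1\stackrel{z\mapsto z^d}{\longrightarrow}S^1\to 1$, identify the induced endomorphism of $H^k(G,S^1)$ with multiplication by $d$, and kill it with the annihilation theorem ($|G|$ annihilates $H^k(G,M)$ for $k>0$, \cite[Ch.~III, Cor.~10.2]{B}). The paper instead compares the two extensions $0\to\ZZ\to\frac{1}{d}\ZZ\to\mu_d\to 1$ and $0\to\ZZ\to\RR\to S^1\to 1$ of trivial $G$-modules: vanishing of $H^*(G,\RR)$ makes the connecting map $H^k(G,S^1)\to H^{k+1}(G,\ZZ)$ an isomorphism, and surjectivity of $H^k(G,\mu_d)\to H^{k+1}(G,\ZZ)$ is reduced to showing that $H^{k+1}(G,\ZZ)\to H^{k+1}(G,\frac{1}{d}\ZZ)$ vanishes, which again comes down to $d[a/d]=0$. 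So both arguments ultimately rest on the same input (the annihilation theorem applied with $d=|G|$), but yours reaches the conclusion in one step from the long exact sequence of a single short exact sequence, whereas the paper's detour through $H^{k+1}(G,\ZZ)$ requires tracking a commutative diagram with two rows. Your version is, if anything, more economical; the only point worth making explicit, which you do flag, is that the $d$-th power map on the coefficient module induces multiplication by $d$ on cohomology, which is immediate at the cochain level by additivity.
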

\begin{proof}
Let $e:\RR\to S^1$ be the map $e(t)=\exp(2\pi\imag t)$.
Consider the following commutative diagram of trivial
$G$-modules with exact rows
$$\xymatrix{0 \ar[r] & \ZZ \ar@{^{(}->}[r] \ar@{=}[d] & \frac{1}{d}\ZZ \ar[r]^{e} \ar@{^{(}->}[d] & \mu_d\ar@{^{(}->}[d] \ar[r] & 1 \\
0\ar[r] & \ZZ\ar@{^{(}->}[r] & \RR\ar[r]^{e} & S^1\ar[r] &
1.}$$ Let $k>0$ be an integer. Taking cohomology we obtain the
following commutative diagram with exact rows (see e.g.
\cite[Ch. III, Prop. 6.1]{B}):
$$\xymatrix{H^k(G,\frac{1}{d}\ZZ) \ar[d]\ar[r] & H^k(G,\mu_d)
\ar[d]\ar[r]^{\partial_{\mu}} & H^{k+1}(G,\ZZ) \ar@{=}[d]\ar[r]^{\iota_d} & H^{k+1}(G,\frac{1}{d}\ZZ) \ar[d] \\
H^k(G,\RR) \ar[r] & H^k(G,S^1) \ar[r]^{\partial_S} &
H^{k+1}(G,\ZZ) \ar[r] & H^{k+1}(G,\RR).}$$ As before, \cite[Ch.
III, Cor. 10.2]{B} implies that $H^k(G,\RR)=H^{k+1}(G,\RR)=0$,
so $\partial_S$ is an isomorphism. Hence we need to prove that
$\partial_\mu$ is surjective, and this will follow if we prove
that $\iota_d=0$. The vanishing of $\iota_d$ follows again from
\cite[Ch. III, Cor. 10.2]{B}. Indeed, if $a\in C^{k+1}(G,\ZZ)$
is a cocycle, then $\frac{a}{d}\in C^{k+1}(G,\frac{1}{d}\ZZ)$
is also a cocycle, and we have $\iota_d[a]=d[a/d]$. Since
$d=|G|$, \cite[Ch. III, Cor. 10.2]{B} implies that $d[a/d]=0$.
\end{proof}

We have so far constructed a set of lifts $\beta$ with the
property that for every $g_1,g_2$ the function
$f_{\beta}(g_1,g_2):X\to S^1$ is constant. By Lemma
\ref{lemma:roots-of-unity}, there exists some $\lambda\in
C^1(G,S^1)$ such that $(f_{\beta}\,\delta\lambda)(g_1,g_2):X\to
S^1$ is constant and its value is a $d$-th root of unity. We
now replace each $\beta_g$ by $\beta_g\lambda(g)$, and denote
again by $\beta_g$ the resulting map. Then we have
$f_{\beta}(g_1,g_2)\in\mu_d$ for every $g_1,g_2$.

We claim that for every
$g_1,\dots,g_k\in G$ there is some $\theta\in\mu_d$ such that
$$\beta_{g_1\dots g_k}^{-1}\beta_{g_1}\dots\beta_{g_k}=\theta.$$
We prove the claim using induction on $k$. The case $k=1$ is trivial, and the case $k=2$
is the statement
that $f_{\beta}(g_1,g_2)\in\mu_d$ for every $g_1,g_2$.
If $k>2$ and the claim is true for
lower values of $k$, then we may write
$$\beta_{g_1}\dots\beta_{g_{k-1}}=\beta_{g_1\dots
g_{k-1}}\theta'$$ for some $\theta'\in\mu_d$. Then
\begin{align*}
\beta_{g_1\dots g_k}^{-1}\beta_{g_1}\dots\beta_{g_k} &=
\beta_{g_1\dots g_k}^{-1}\beta_{g_1\dots
g_{k-1}}\theta'\beta_{g_k} \\
&= \beta_{g_1\dots
g_k}^{-1}\beta_{g_1\dots g_{k-1}}\beta_{g_k}\theta'
= f_{\beta}(g_1\dots g_{k-1},g_k)\theta'\in\mu_d,
\end{align*}
which proves the induction step and hence the claim.

At this point we define $G'\subset\Diff(L)$ to be the subgroup
generated by $\{\beta_g\mid g\in G\}$. By construction $G'$ acts
on $L$ by bundle automorphisms. Consider the surjective
morphism
$$\pi:G'\to G$$
satisfying $\pi(\beta_g)=g$. To conclude the proof of the theorem
we prove that any element in
$\Ker\pi$ can be identified with multiplication by an element of $\mu_d$. Now, if
$g_1,\dots,g_k\in G$ satisfy $g_1\dots g_k=\Id$ then by the previous claim
there exists some $\theta\in\mu_d$ such that
$$\beta_{g_1}\dots\beta_{g_k}=\beta_{g_1\dots g_k}\theta=\beta_{\Id}\theta.$$
Since $f_{\beta}(\Id,\Id)=\beta_{\Id}^{-1}\beta_{\Id}\beta_{\Id}=\beta_{\Id}\in \mu_d$, it follows that $\beta_{g_1}\dots\beta_{g_k}\in\mu_d$.
\end{proof}

\subsection{Lifting Hamiltonian finite group actions}

\begin{theorem}
\label{thm:lift-Ham-action-line-bundle} Let $(X,\omega)$ be a
compact and connected symplectic manifold and let $L\to X$ be a
complex line bundle. For any finite subgroup
$G\subset\Ham(X,\omega)$
there is a finite group $G'$ sitting in a short exact sequence
$$1\to H\to G'\to G\to 1,$$
where $H$ is finite and cyclic and $|H|$ divides $|G|$, and a smooth action of $G'$
on $L$ by bundle automorphisms lifting the action of $G$ on
$X$.
\end{theorem}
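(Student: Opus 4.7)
The plan is to apply Theorem \ref{thm:existence-lifting}. First one checks the standing hypothesis $g^*L\simeq L$ for every $g\in G$: this is immediate from $\Ham(X,\omega)\subseteq\Diff_0(X)$, since any $g$ smoothly isotopic to the identity preserves the smooth isomorphism class of every complex line bundle. It therefore suffices to show that the obstruction class $b(L)\in H^2(G,H^1(X;\ZZ))$ defined in Section \ref{s:lifting-actions-line-bundle} vanishes, for then Theorem \ref{thm:existence-lifting} will produce a central extension $1\to H\to G'\to G\to 1$ with $H$ cyclic and $|H|=|G|$ together with a lift of the $G$-action to $L$; in particular $|H|$ divides $|G|$.

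To compute $b(L)$ with a geometrically meaningful cocycle, I would fix a Hermitian metric and a Hermitian connection on $L$ and, for each $g\in G$, pick a Hamiltonian isotopy $\{\phi^g_t\}_{t\in[0,1]}$ from $\Id$ to $g^{-1}$. Each isotopy admits a smooth lift $\{\tilde\phi^g_t\}$ to Hermitian bundle automorphisms of $L$ covering $\phi^g_t$, unique up to a constant element of $S^1$ (for instance by horizontal lift with respect to the connection, twisted to remain isometric). Setting $\beta_g:=\tilde\phi^g_1$ defines a set of lifts in the sense of the discussion preceding Theorem \ref{thm:existence-lifting}, and by construction the resulting cocycle $f_\beta(g_1,g_2)\in\cC^{\infty}(X,S^1)$ is the ratio $\beta_{g_1g_2}^{-1}\beta_{g_2}\beta_{g_1}$ of two lifts of $\Id_X$, hence equals the monodromy on $L$ of the Hamiltonian loop $\gamma_{g_1,g_2}$ obtained by concatenating the three isotopies $\phi^{g_1}$, $\phi^{g_1}_1\circ\phi^{g_2}$ and the time-reverse of $\phi^{g_1g_2}$.

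The crux is then to invoke the topological rigidity of Hamiltonian loops of Lalonde--McDuff--Polterovich \cite{LMP,McD}: for any Hamiltonian loop $\psi:S^1\to\Ham(X,\omega)$ and any complex line bundle $L\to X$, any smooth lift of $\psi$ to an automorphism of $L$ covering $\psi_1=\Id$ is homotopic, through lifts of $\psi$, to multiplication by a constant element of $S^1$. Applied to $\gamma_{g_1,g_2}$, this says precisely that the map $f_\beta(g_1,g_2):X\to S^1$ is homotopic to a constant, i.e.\ represents the trivial class in $[X,S^1]=H^1(X;\ZZ)$. Hence the cocycle representing $b(L)$ is already trivial on the nose at the level of $H^1(X;\ZZ)$, so $b(L)=0$ and Theorem \ref{thm:existence-lifting} concludes the argument.

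The main obstacle I anticipate is to formulate and apply the LMP rigidity result in exactly the form just used, valid for an \emph{arbitrary} complex line bundle $L$ (not only a prequantum one) and as a homotopical statement about self-maps of $L$ rather than about cohomology actions; this rests on the Seidel-representation construction of a line bundle on the total space of the Hamiltonian fibration over $S^2$ associated to a Hamiltonian loop, whose restriction to each fibre is identified with $L$, together with the fact that $c_1(L)$ extends to a class on this total space. A secondary technical point is to ensure the lifts $\tilde\phi^g_t$ exist as a smooth family of Hermitian bundle isomorphisms and that the concatenation producing $\gamma_{g_1,g_2}$ is genuinely a smooth (after smoothing at the corners) Hamiltonian loop, so that LMP applies.
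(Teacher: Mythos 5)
Your proposal follows essentially the same route as the paper: lift each $g^{-1}$ to $L$ by parallel transport along a chosen Hamiltonian isotopy, observe that the resulting cocycle $f_{\beta}(g_1,g_2)$ is the holonomy of $L$ around a concatenated Hamiltonian loop, invoke Lalonde--McDuff--Polterovich rigidity to conclude that $f_{\beta}(g_1,g_2):X\to S^1$ is null-homotopic, and then apply Theorem \ref{thm:existence-lifting}. The one obstacle you flag is resolved in the paper by using only the homological form of the rigidity result (Theorem 1.A of \cite{LMP}, extended to general compact symplectic manifolds in \cite{McD}) rather than any homotopical statement about lifts to $L$: for each loop $\gamma:S^1\to X$ one computes via Chern--Weil that $\deg\bigl(f_{\beta}(g_1,g_2)\circ\gamma\bigr)=\la c_1(L),\Psi_*[S^1\times S^1]\ra$, where $\Psi$ is the evaluation of the concatenated Hamiltonian loop on $\gamma$, and LMP gives $\Psi_*[S^1\times S^1]=0$ in $H_2(X;\QQ)$; the vanishing of all these degrees is exactly the null-homotopy of $f_{\beta}(g_1,g_2)$, valid for an arbitrary line bundle $L$.
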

\begin{proof}
Take a Hermitian structure on $L$ and let $\nabla$ be a unitary
connection on $L$.

Choose for each $g\in G$ a time dependent Hamiltonian
$H_g:X\times [0,1]\to\RR$ whose flow at time $1$ is equal to
$g$. Reparametrizing $H_g$ in the time coordinate (and
rescaling accordingly) if necessary, we may assume that the
support of each $H_g$ is contained in $X\times (a,b)$ for some
open subinterval $(a,b)\subset [0,1]$.

Denote by $\beta_g:L\to
L$ the bundle automorphism obtained using parallel transport
with respect to $\nabla$ along the paths given by the
Hamiltonian flow of $H_{g^{-1}}$. More precisely, let
$\{\phi_{g,t}\}_{t\in [0,1]}$ denote the path in
$\Ham(X,\omega)$ defined by the conditions $\phi_{g,0}=\Id_X$
and, for each $t\in [0,1]$ and $x\in X$,
$$\left.\frac{\partial}{\partial
t}\phi_{g,t}(x)\right|_{t=\tau}=\xX_{g,\tau}(\phi_{g,\tau}(x)),$$
where $\xX_{g,t}$ is the Hamiltonian vector field associated to
$H_g(\cdot,t)$, which means that
\begin{equation}
\label{eq:transport-parallel-nabla}
d(H_g(\cdot,t))=\iota_{\xX_{g,t}}\omega,
\end{equation}
where $\iota$ denotes the standard contraction map. By our
choice of $H_g$ we have $g=\phi_{g,1}$.

Let $\Pi:L\to X$ be the projection map.
For each $g\in G$ and each $\lambda\in L$ there is a unique
smooth map $\psi_g^\lambda:[0,1]\to L$ satisfying
$\psi_g^{\lambda}(0)=\lambda$,
$\Pi(\psi_g^{\lambda}(t))=\phi_{g,t}(x)$, with
$x=\Pi(\lambda)$, and $(\psi_g^{\lambda})'(t)$ belongs to the
horizontal distribution of $\nabla$ for each $t$. Define
$$\beta_g(\lambda):=\psi_{g^{-1}}^{\lambda}(1).$$
Then $\beta_g$ lifts $g^{-1}$ and preserves the
Hermitian structure on $L$. Consider, as in the previous subsection,
the cocycle
$f_{\beta}\in C^2(G,\cC^{\infty}(X,S^1))$ defined by
$$f_{\beta}(g_1,g_2)=\beta_{g_1g_2}^{-1}\beta_{g_2}\beta_{g_1}$$ for every
$g_1,g_2\in G$. We claim that for every $g_1,g_2\in G$
the map $f_{\beta}(g_1,g_2):X\to S^1$ is
null-homotopic (i.e. homotopic to a constant map). By Theorem \ref{thm:existence-lifting},
the claim implies the present theorem.
On the other hand, the claim is equivalent to the condition that
for any smooth map $\gamma:S^1\to X$ the composition
$f_{\beta}(g_1,g_2)\circ\gamma:S^1\to S^1$ is null-homotopic, or equivalently that
the degree of $f_{\beta}(g_1,g_2)\circ\gamma$ is $0$.

Fix elements $g_1,g_2\in G$ and a smooth map $\gamma:S^1\to X$.
To compute $\deg (f_{\beta}(g_1,g_2)\circ\gamma)$ we consider
the concatenation of the flows defining $\alpha_{g_1^{-1}}$,
$\alpha_{g_2^{-1}}$ and $\alpha_{g_2^{-1}g_1^{-1}}$. Namely let
$Z:X\times [0,1]\to \RR$ be defined as follows:
$$Z(x,t)=\left\{\begin{array}{ll}
3H_{g_1^{-1}}(x,3t) & \quad\text{if $t\in [0,\frac{1}{3}]$,}\\
3H_{g_2^{-1}}(x,3t-1) & \quad\text{if $t\in [\frac{1}{3},\frac{2}{3}]$,}\\
3H_{g_2^{-1}g_1^{-1}}(x,3t-2) & \quad\text{if $t\in [\frac{2}{3},1]$.}
\end{array}\right.$$
Note that $Z$ is smooth because of our assumption on the support of each $H_g$.
Let $\{\Phi_t\}_{t\in [0,1]}$ be the path in $\Ham(X,\omega)$
defined by $Z$, which we view as a time depending Hamiltonian.
Then $\Phi_1(x)=x$ for every $x\in X$, so we may define a map
$$\Psi:S^1\times S^1\to X,\qquad \Psi(e^{2\pi\imag u},e^{2\pi\imag
v})=\Phi_u(\gamma(e^{2\pi\imag v})).$$
We can identify $f_{\beta}(g_1,g_2)(\gamma(\theta))$ with the
holonomy of $(\Psi^*L,\Psi^*\nabla)$ along
$S^1\times\{\theta\}$ with the orientation given by the path
$[0,1]\ni u\mapsto (e^{2\pi\imag u},\theta)\in S^1\times S^1$.
Define $\eta:[0,1]\to S^1$ by
$\eta(v)=f_{\beta}(g_1,g_2)(\gamma(e^{2\pi\imag v}))$. Let
$[S^1\times S^1]\in H_2(S^1\times S^1)$ denote the
fundamental class, defined taking the counterclockwise
orientation on each factor. We have:
\begin{align*}
\deg f_{\beta}(g_1,g_2)\circ \gamma &= \frac{1}{2\pi\imag}
\int_0^1 \frac{\eta'(v)}{\eta(v)}\,dv \\
&= \frac{\imag}{2\pi}\int_{S^1\times S^1}\Psi^*F_{\nabla} \qquad\text{by Stokes theorem} \\
&= \la c_1(\Psi^*L),[S^1\times S^1]\ra \qquad\text{by Chern--Weil theory} \\
&= \la c_1(L),\Psi_*[S^1\times S^1]\ra.
\end{align*}
Now, Theorem 1.A in \cite{LMP} implies that $\Psi_*[S^1\times
S^1]\in H_2(X;\QQ)$ is zero; indeed, $\Phi$ defines a loop in
$\Ham(X,\omega)$, and $\Psi_*[S^1\times S^1]$ is equal to
$\partial_{\Phi}(\gamma_*[S^1])$ in the notation of \cite{LMP}
(the proof of Theorem 1.A in \cite{LMP} requires $X$
to be spherically monotone; the proof for a general compact
symplectic manifold is given in \cite{McD}). It follows that
$\deg (f_{\beta}(g_1,g_2)\circ \gamma)=0$, so the proof of the
claim is complete.
\end{proof}

\subsection{Lifting actions of commutators of finite groups}

\begin{theorem}
\label{thm:lift-commutator-action-line-bundle} Let $X$ be a
compact and connected smooth manifold and let $L\to X$ be a
complex line bundle. Suppose that $\Gamma\subset\Diff(X)$ is a finite
subgroup satisfying $\gamma^*L\simeq L$ for every $\gamma\in\Gamma$,
and suppose that the action of $\Gamma$ on $H^1(X)$ is trivial.
There is a finite group $G'$ sitting in a short exact sequence
$$1\to H\to G'\to [\Gamma,\Gamma]\to 1,$$
where $H$ is finite and cyclic and $|H|$ divides $|[\Gamma,\Gamma]|$, and a smooth action of $G'$
on $L$ by bundle automorphisms lifting the action of $G$ on $X$.
\end{theorem}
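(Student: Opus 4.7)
I apply Theorem \ref{thm:existence-lifting} with $G := [\Gamma,\Gamma]$, which reduces the problem to verifying that the obstruction class $b_G(L) \in H^2(G, H^1(X;\ZZ))$ vanishes. Since $\Gamma$ (and hence $G$) acts trivially on $H^1(X;\ZZ)$ by hypothesis, the relevant coefficient module is trivial. The class $b_\Gamma(L) \in H^2(\Gamma, H^1(X;\ZZ))$ is well-defined in exactly the same way as in \S\ref{s:lifting-actions-line-bundle}: choosing Hermitian lifts $\beta_\gamma: L \to L$ of $\gamma^{-1}$ for every $\gamma \in \Gamma$, the cocycle $f_\beta$ reduces to a cocycle $\bar f_\beta \in Z^2(\Gamma, H^1(X;\ZZ))$ after taking homotopy classes precisely because the action on $H^1(X;\ZZ)$ is trivial, and its class equals $b_\Gamma(L)$. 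By naturality, $b_G(L)$ is the restriction of $b_\Gamma(L)$.

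The key observation is that $H^1(X;\ZZ) = \Hom(H_1(X;\ZZ), \ZZ)$ is a free abelian group, while the Schur multiplier $H_2(\Gamma)$ is finite because $\Gamma$ is finite. Hence $\Hom(H_2(\Gamma), H^1(X;\ZZ)) = 0$, and the split universal coefficient short exact sequence for group cohomology with trivial coefficients collapses to
\[
H^2(\Gamma, H^1(X;\ZZ)) \;\cong\; \Ext^1(\Gamma^{ab}, H^1(X;\ZZ)),
\]
and similarly $H^2(G, H^1(X;\ZZ)) \cong \Ext^1(G^{ab}, H^1(X;\ZZ))$.

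Under restriction, the induced map on these $\Ext^1$ groups is (contravariantly in the first argument) determined by the abelianization of the inclusion $G \hookrightarrow \Gamma$, namely by $G^{ab} = [\Gamma,\Gamma]^{ab} \to \Gamma^{ab}$. Since $[\Gamma,\Gamma]$ lies in the kernel of $\Gamma \twoheadrightarrow \Gamma^{ab}$, this map is identically zero, so the restriction map on $H^2$ vanishes. In particular $b_G(L) = 0$, and Theorem \ref{thm:existence-lifting} yields the desired finite group $G'$ sitting in a short exact sequence $1 \to H \to G' \to [\Gamma,\Gamma] \to 1$ with $H$ finite cyclic of order exactly $|[\Gamma,\Gamma]|$, which a fortiori divides $|[\Gamma,\Gamma]|$, together with a smooth action of $G'$ on $L$ by bundle automorphisms lifting the action of $[\Gamma,\Gamma]$ on $X$.

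There is no serious obstacle in this plan: once one recognizes that free abelian coefficients force the Schur-multiplier summand of $H^2$ to vanish, the conclusion follows from the formal structure of restriction in group cohomology. The only subtlety, handled at the outset, is that the triviality of the $\Gamma$-action on $H^1(X;\ZZ)$ is exactly what is needed for the homotopy-class reduction of $f_\beta$ to define a cocycle in the first place.
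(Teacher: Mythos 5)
Your argument is correct, and it takes a genuinely different route from the paper. You stay entirely inside group cohomology: having reduced, via Theorem \ref{thm:existence-lifting}, to showing that the obstruction class vanishes on $G=[\Gamma,\Gamma]$, you observe that for the \emph{trivial} coefficient module $M=H^1(X;\ZZ)$ (trivial by hypothesis, and torsion-free for any space) the universal coefficient sequence collapses to $H^2(\,\cdot\,,M)\cong\Ext^1((\,\cdot\,)^{\mathrm{ab}},M)$ because the Schur multipliers of $\Gamma$ and $G$ are finite, and then naturality identifies the restriction $H^2(\Gamma,M)\to H^2(G,M)$ with the map induced by $G^{\mathrm{ab}}\to\Gamma^{\mathrm{ab}}$, which is zero since $G$ consists of commutators. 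The paper instead argues differential-geometrically: it fixes a $\Gamma$-invariant de Rham representative of $c_1(L)$, considers the space of gauge classes of unitary connections with that curvature as a torsor over $T\simeq H^1(X;\RR)/H^1(X;\ZZ)$, shows (using triviality of the action on $H^1$) that $\Gamma$ acts on this torsor through a homomorphism $\Gamma\to T$ into an abelian group, hence that $G$ fixes a connection class, and then chooses lifts preserving a fixed connection so that the discrepancy cocycle is literally constant. Both proofs exploit the same underlying mechanism --- the obstruction is ``abelian in nature'' and therefore dies on the commutator subgroup --- but yours is shorter and purely formal, at the price of invoking the universal coefficient theorem and its naturality, while the paper's is self-contained given its gauge-theoretic setup and produces the constant cocycle explicitly. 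One small inaccuracy worth noting: the reduction of $f_\beta$ to a cocycle with values in $H^1(X;\ZZ)$ does not require the action on $H^1(X;\ZZ)$ to be trivial (the paper defines $b(L)$ with the inherited, possibly nontrivial, action); what the triviality hypothesis actually buys you is that the coefficients form a trivial module, which is what your universal coefficient argument needs. This does not affect the validity of your proof.
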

\begin{proof}
Let $G=[\Gamma,\Gamma]$. By Theorem \ref{thm:existence-lifting} it suffices to construct
a set of lifts $(\beta_g)_{g\in G}$ such that $\beta_{g_1g_2}^{-1}\beta_{g_2}\beta_{g_1}$
is homotopic to a constant map for every $g_1,g_2\in G$.

Let $\eta_0\in\Omega^2(X)$ be de Rham representative of $c_1(L)$.
For every $\gamma\in\Gamma$ we have $\gamma^*L\simeq L$,
and this implies that $\gamma^*\eta_0$ is cohomologous to $\eta_0$.
Consequently
$$\eta=\frac{1}{|\Gamma|}\sum_{\gamma\in\Gamma}\gamma^*\eta_0$$
is $\Gamma$-invariant and also represents $c_1(L)$.
Take a Hermitian metric on $L$, and let
$\aA_\eta$ denote the space of all unitary connections on $L$ whose curvature is equal to
$-\imag 2\pi\eta$. By de Rham's theorem $A_{\eta}$ is nonempty.
Addition of closed one forms gives $\aA_{\eta}$ the
structure of torsor over
$\Ker(d:\Omega^1(X,\imag\RR)\to\Omega^2(X,\imag\RR))$.
Let
$$\mM_{\eta}=\aA_{\eta}/\Map(X,S^1),$$
where elements of $\Map(X,S^1)$ act on $\aA_{\eta}$ as gauge transformations of $L$.
The torsor structure on $\aA_{\eta}$ induces on $\mM_{\eta}$ a struture of torsor over
$$T:=\frac{\Ker(d:\Omega^1(X,\imag\RR)\to\Omega^2(X,\imag\RR))}{d\log (\Map(X,S^1))}\simeq
\frac{H^1(X;\RR)}{H^1(X,\ZZ)}\simeq (S^1)^{b_1(X)},$$
where $d\log$ sends $\sigma:X\to S^1$ to $\sigma^{-1}d\sigma\in
\Ker(d:\Omega^1(X,\imag\RR)\to\Omega^2(X,\imag\RR))$.

The group $\Gamma$ acts on $\mM_{\eta}$ by pullback: if $[A]\in\mM_{\eta}$ is the gauge
equivalence class of a connection $A$ and $\gamma\in\Gamma$, then $\gamma^*A$ is a connection
on $\gamma^*L$, so if $\iota:L\to\gamma^*L$ is an isomorphism of Hermitian line bundles
(which exists, since $L$ and $\gamma^*L$ are isomorphic line bundles)
then $\iota^*\gamma^*A$ is a connection on $L$, whose gauge equivalence class is independent
of $\iota$. Hence we may define
$$\gamma^*[A]:=[\iota^*\gamma^*A].$$
This action is compatible with the torsor structure, in the sense that if $\tau\in T$ then
$\gamma^*(\tau\cdot[A])=\tau\cdot\gamma^*[A]$ for every $\gamma$ and $[A]$. Indeed, one may
choose as a representative of $\tau$ a closed $1$-form $\alpha$, so that $\tau\cdot[A]=[A+\alpha]$,
and compute
\begin{align*}
\gamma^*(\tau\cdot[A]) &=\gamma^*[A+\alpha]=[\gamma^*A+\gamma^*\alpha]=
[\gamma^*A+(\gamma^*\alpha-\alpha)+\alpha] \\
&=\tau\cdot[\gamma^*A+(\gamma^*\alpha-\alpha)]
=\tau\cdot[\gamma^*A]=\tau\cdot\gamma^*[A].
\end{align*}
Here we have used the fact that $\gamma^*\alpha-\alpha$ is exact, which follows from the
assumption that $\Gamma$ acts trivially on $H^1(X)$.

It follows that for every $\gamma\in\Gamma$ there exists some $\tau_{\gamma}\in T$ such
that $\gamma^*[A]=\tau_{\gamma}\cdot[A]$ for every $[A]\in\mM_{\eta}$. The map
$\Gamma\to T$ sending $\gamma$ to $\tau_{\gamma}$ is necessarily a morphism of groups,
and since $T$ is abelian its restriction to $G$ is trivial. Hence
$G$ acts trivially on $\mM_{\eta}$.

Fix a connection $A\in\aA_{\eta}$.
For every $g\in G$ let $\beta_g:L\to L$ be an isomorphism lifting $g^{-1}$ and satisfying
$\beta_g^*A=A$. Such $\beta_g$ exists because $G$ acts trivially on $\mM_{\eta}$.
Then, for every $g_1,g_2\in G$,
$\beta_{g_1g_2}^{-1}\beta_{g_2}\beta_{g_1}$ is a gauge transformation of $L$
which fixes the connection $A$. This implies that $\beta_{g_1g_2}^{-1}\beta_{g_2}\beta_{g_1}$
is constant as a map from $X$ to $S^1$, and so $(\beta_g)_{g\in G}$ is the required set of lifts.
\end{proof}

\section{Proofs of the main theorems}
\label{s:proofs-main}

\subsection{Proof of Theorem \ref{thm:fixed-pt-symplectic}}
\label{ss:proof-thm:fixed-pt-symplectic}
Take a class $w'\in H^*(X;\QQ)$ sufficiently close to $w$
so that $(w')^n\neq 0$. Multiplying $w'$ by a suitable integer
we obtain an integral class $v\in H^2(X)$ satisfying $v^n\neq 0$.
Let $L\to X$ be a complex line bundle such that $c_1(L)=v$.
Let $E=L^{\oplus n}$. Then $\e(E)=c_1(L)^n=v^n$, so
$\la [X],\e(E)\ra\neq 0$.

Let $p$ be a prime and let $G$ be a finite $p$-group acting on $X$ preserving an
almost complex structure. Since $b_1(X)=0$, by Theorem \ref{thm:lift-action-line-bundle}
there exists a short exact sequence
$$1\to H\to G_L\stackrel{\pi}{\longrightarrow} G\to 1$$
such that $G_L$ is a finite $p$-group and the action of $G$ on $X$ lifts to an action
of $G_L$ on $L$. Let $G'$ be the fiber product of $n$ copies of $G_L$ over $G$, i.e.,
$$G'=\{(g_1,\dots,g_n)\in G_L^n\mid \pi(g_1)=\dots=\pi(g_n)\},$$
and let $\pi':G'\to G$ be the map $\pi'(g_1,\dots,g_n)=\pi(g_1)$.
The action of $G_L$ on $L$ induces an action of $G'$ on $E$ lifting the
action of $G$ on $X$. Since the action of $G$ on $X$ preserves $J$,
the action of $G'$ on
$E$ is given by a morphism $G'\to\Aut(E,X,J)$.

Applying Theorem \ref{thm:main-fixed-point} to the image of the morphism
$G'\to\Aut(E,X,J)$ we deduce the
existence of an abelian subgroup $A'\subseteq G'$ such that $X^{A'}\neq\emptyset$ and
$[G':A']\leq C$. It follows that $A:=\pi'(A')$ satisfies $X^A\neq\emptyset$ and $[G:A]\leq C$.

\subsection{Proof of Theorem \ref{thm:fixed-pt-hamiltonian}}
The hypothesis that $G$ is a finite subgroup of $\Ham(X,\omega)$ implies that the action of $G$ on
$X$ preserves an almost complex structure on $X$ (see Remark \ref{rmk:ac-structures}).
Then the proof is almost the same as that of Theorem \ref{thm:fixed-pt-symplectic} which we just
gave, replacing
Theorem \ref{thm:lift-action-line-bundle} by Theorem \ref{thm:lift-Ham-action-line-bundle}.

\subsection{A criterion to test Jordan property}

Suppose that
$\cC$ is a set of finite groups and let $C,d$ be natural numbers.
We say that $\cC$ satisfies the property $\jJ(C,d)$
if each $G\in\cC$ has an abelian subgroup $A$ such that $[G:A]\leq C$ and
$A$ can be generated by $d$ elements. Let $\tT(\cC)$ the set of all $T \in \cC$ such that
there exist distinct primes $p$ and $q$, a Sylow $p$-subgroup $P$ of $T$ (which might
be trivial), and a normal Sylow $q$-subgroup $Q$ of $T$, such that $T = PQ$.
The following is the main result in \cite{MT}. Note that the proof uses the classification of
finite simple groups.

\begin{theorem}
\label{thm:MT}
If $\cC$ is closed under taking subgroups and
if $\tT(\cC)$ satisfies $\jJ(C,d)$
then $\cC$ satisfies $\jJ(C',d)$ for some natural number $C'$
depending on $C$ and $d$.
\end{theorem}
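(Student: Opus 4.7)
The plan is to combine the classification of finite simple groups (CFSG) with a structural analysis of $G\in\cC$ via the generalized Fitting subgroup.

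First, every Sylow subgroup of any $G\in\cC$ lies in $\tT(\cC)$ --- take the second Sylow in the definition of $\tT$ to be trivial --- and so satisfies $\jJ(C,d)$ by hypothesis. This forces every Sylow $p$-subgroup of every $G\in\cC$ to have $p$-rank at most $d+\log_2 C$, since any elementary abelian subgroup of rank $r$ would meet an abelian subgroup of index $\le C$ in rank at least $r-\log_p C$.

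Invoking CFSG, this uniform Sylow-rank bound restricts the nonabelian simple composition factors of members of $\cC$ to a finite list $\sS_0=\sS_0(C,d)$: the Sylow $2$-rank of $A_n$ grows like $\lfloor n/2\rfloor$, and Sylow subgroups of Lie-type groups grow with the Lie rank and with the logarithm of the field size. Passing to the generalized Fitting subgroup $F^*(G)=F(G)E(G)$ and using $C_G(F^*(G))\subseteq F^*(G)$, the quotient $G/F^*(G)$ embeds into $\operatorname{Out}(F^*(G))$, which by the preceding is of bounded order. Together with a bound on the number of quasisimple factors in the layer $E(G)$ (again from the Sylow-rank bound), this yields $[G:F(G)]\le C_1=C_1(C,d)$.

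For the nilpotent core $F(G)=\prod_p O_p(G)$, the hypothesis applied to each $O_p(G)\in\tT(\cC)$ produces an abelian subgroup $A_p\le O_p(G)$ of index $\le C$ generated by $\le d$ elements. The product $A=\prod_p A_p$ is abelian in $F(G)$ and, because the $A_p$ have pairwise coprime orders, is itself generated by at most $d$ elements. To bound $[F(G):A]=\prod_p[O_p:A_p]$, observe that whenever $O_p$ is nonabelian the index $[O_p:A_p]$ is a nontrivial power of $p$, so $p\le C$; hence only the boundedly many primes $p\le C$ contribute a nontrivial factor, and $[F(G):A]\le C^{\pi(C)}$, where $\pi(C)$ denotes the number of primes at most $C$. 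Combining with the previous paragraph, $A\le G$ is abelian, $d$-generated, and $[G:A]\le C_1\cdot C^{\pi(C)}=:C'$, establishing $\jJ(C',d)$.

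The main obstacle is the last step: preserving both the index bound in $F(G)$ and the generator count $d$ simultaneously requires gluing the Sylow components $A_p$ across potentially many primes. This is precisely enabled by the fact that the $A_p$ have pairwise coprime orders (so their direct product is again $d$-generated), combined with the observation that the minimal index of an abelian subgroup of a nonabelian $p$-group is at least $p$, forcing large primes to contribute automatically abelian Sylow components. The delicacy --- and the reason CFSG is indispensable --- lies in making the nonnilpotent quotient $G/F(G)$ of bounded order, so that the $d$-generated abelian subgroup $A$ of $F(G)$ already realizes $\jJ(C',d)$ for the ambient $G$.
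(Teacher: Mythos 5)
There is a genuine gap, and it is fatal as the argument stands. The problem is the step asserting that $G/F^*(G)$ embeds into $\operatorname{Out}(F^*(G))$, ``which by the preceding is of bounded order,'' and hence that $[G:F(G)]\leq C_1(C,d)$. The finite list $\sS_0$ of nonabelian simple composition factors controls only the part of $\operatorname{Out}(F^*(G))$ coming from the layer $E(G)$; it says nothing about automorphisms of the Fitting subgroup, and $\operatorname{Out}(F(G))=\Aut(F(G))$ is unbounded even when $F(G)$ is cyclic of prime order. Concretely, for primes $q\mid p-1$ let $G=\ZZ/p\rtimes\ZZ/q$ with faithful action: then $F^*(G)=F(G)=\ZZ/p$, $\operatorname{Out}(F^*(G))\simeq\ZZ/(p-1)$ is unbounded, and $G/F(G)\simeq\ZZ/q$ with $q$ arbitrarily large. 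So $[G:F(G)]\leq C_1(C,d)$ does not follow from the rank bound plus CFSG, and without it your final gluing step has nothing to attach to.

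The deeper symptom is that your argument never uses the genuinely mixed groups in $\tT(\cC)$: the Sylow-rank bound, the CFSG reduction, and the treatment of $F(G)=\prod_p O_p(G)$ all invoke the hypothesis only for $p$-groups (Sylow subgroups and the $O_p(G)$). But the paper explicitly remarks that the statement becomes \emph{false} if $\tT(\cC)$ is replaced by the collection of all $p$-groups in $\cC$. Indeed, the class of all subgroups of the groups $\ZZ/p\rtimes\ZZ/q$ above is closed under subgroups and all of its $p$-subgroups are cyclic, yet $\alpha(\ZZ/p\rtimes\ZZ/q)=\min(p,q)$ is unbounded; any proof that only exploits the $p$-group members of $\tT(\cC)$ would ``prove'' a false theorem. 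The indispensable role of the two-prime groups $PQ$ is precisely to control the coprime action of a Sylow $p$-subgroup on a normal $q$-subgroup, which is what is needed to bound the index of the Fitting subgroup (or a suitable substitute); this is where the real work in \cite{MT} lies, and note that the present paper cites \cite{MT} for the proof rather than reproving it. Your concluding analysis of $F(G)$ --- gluing the $A_p$ across coprime orders to keep $d$ generators, and observing that a nonabelian $O_p(G)$ forces $p\leq C$ so that $[F(G):A]\leq C^{\pi(C)}$ --- is correct and is in the spirit of arguments used elsewhere in the paper, but it only reduces the theorem to the unestablished claim that $[G:F(G)]$ is bounded.
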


Theorem \ref{thm:MT} is false if one replaces $\tT(\cC)$ by the collection
of all $p$-groups in $\cC$ for varying prime $p$. However, for certain collections
of finite groups it does suffice to consider only the $p$-groups in order to obtain
property $\jJ(C,d)$, as the following theorem shows.

\begin{theorem}
\label{thm:test-Jordan} Let $X$ be a $2n$-dimensional smooth,
compact and connected manifold. Let $\cC$ be a collection of finite
subgroups of $\Diff(X)$. Suppose that:
\begin{enumerate}
\item $\cC$ is closed under taking subgroups,
\item for any $G\in\cC$ there is a $G$-invariant almost complex structure
on $X$,
\item there exists a constant $C_0$ such
that, for any prime $q$, any $G\in\cC$ which is a $q$-group
has a subgroup $G_0\subseteq G$ satisfying
$X^{G_0}\neq\emptyset$ and $[G:G_0]\leq C_0$.
\end{enumerate}
Then $\cC$ satisfies $\jJ(C,n)$, where $C$ only
depends on $C_0$ and $H^*(X)$.
\end{theorem}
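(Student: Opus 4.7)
By Theorem \ref{thm:MT}, it suffices to produce a constant $C_1 = C_1(C_0, H^*(X))$ such that $\tT(\cC)$ satisfies $\jJ(C_1, n)$. Fix $T \in \tT(\cC)$ and write $T = PQ$ with $Q$ a normal Sylow $q$-subgroup and $P$ a Sylow $p$-subgroup. If $P$ or $Q$ is trivial, then $T$ is a single-prime group; hypothesis (3) produces $T_0 \leq T$ with $[T:T_0] \leq C_0$ and $X^{T_0} \neq \emptyset$, and Corollary \ref{cor:Jordan-classic}, applied via the $T$-invariant almost complex structure from (2), yields an abelian subgroup of $T_0$ of index at most $\Jor_{2n}$ generated by $\leq n$ elements. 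So we may assume both $P$ and $Q$ are nontrivial. Similarly, if $|Q| \leq C_0$ then applying (3) to $P$ produces $P_0 \leq P$ with $X^{P_0} \neq \emptyset$ and $[P:P_0] \leq C_0$, and since $[T:P_0] = |Q|\cdot [P:P_0] \leq C_0^2$ the same corollary supplies the required abelian subgroup; so I may further assume $|Q| > C_0$.

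The plan is to locate a subgroup $T_0 \leq T$ of bounded index with $X^{T_0} \neq \emptyset$, after which a final application of Corollary \ref{cor:Jordan-classic} finishes. I apply (3) to $Q$, obtaining $Q_0 \leq Q$ with $X^{Q_0} \neq \emptyset$ and $[Q:Q_0] \leq C_0$; the assumption $|Q| > C_0$ forces $Q_0 \neq 1$. Two auxiliary bounds are crucial: (i) Theorem \ref{thm:MS} provides a uniform rank bound $r = r(H^*(X))$ on the finite subgroups of $\Diff(X)$ appearing in $\cC$, and a standard counting argument (treating $q > C_0$ and $q \leq C_0$ separately, with the $q$-binomial bound in the latter) shows that any $q$-group of rank $\leq r$ has at most $M = M(C_0, r)$ subgroups of index $\leq C_0$, uniformly in $q$; since the $T$-conjugates of $Q_0$ are such subgroups of $Q$, this bounds $[T : N_T(Q_0)] \leq M$. (ii) Lemma \ref{lemma:cohom-no-augmenta} gives a bound $B = B(H^*(X))$ on the number of connected components of $X^{Q_0}$. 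Choosing a component $Y_0 \subseteq X^{Q_0}$, its stabilizer $S \leq N_T(Q_0)$ satisfies $[T:S] \leq MB$, and $S$ acts on the connected, almost-complex submanifold $Y_0 \subseteq X$, with $Q_0 \leq S$ acting trivially on $Y_0$.

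The main obstacle is now to find a bounded-index subgroup of $S$ having a fixed point on $X$ (equivalently on $Y_0$). I would proceed by induction on the real dimension of the ambient almost-complex submanifold, formulating a slightly more general statement valid for every pair $(Y,H)$ appearing at each stage of the iteration; the induction terminates after at most $n$ steps, because $Q_0 \neq 1$ forces $\dim Y_0 < \dim X$. The delicate point is ensuring that hypothesis (3) propagates to the $S/Q_0$-action on $Y_0$. Given a $q'$-subgroup $\overline{G} \leq S/Q_0$, I lift it to $G \leq S$ (as the full preimage when $q' = q$, in which case $G$ is a $q$-group automatically containing $Q_0$; as a Schur--Zassenhaus $p$-complement to $Q_0$ when $q' = p$); applying (3) to the appropriate single-prime subgroup of $G$ produces $G_1$ of bounded index in $G$ with $X^{G_1} \neq \emptyset$, and one further round of the normalizer- and component-counting arguments of the previous paragraph, carried out this time inside $X^{Q_0}$, extracts a bounded-index subgroup of $\overline{G}$ with a fixed point in $Y_0$. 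Each dimension-reduction step multiplies the index by a constant depending only on $C_0$ and $H^*(X)$, so after at most $n$ steps and a final application of Corollary \ref{cor:Jordan-classic} one obtains $A \leq T$ with $[T:A] \leq C_1$ and $A$ generated by $\leq n$ elements, establishing $\jJ(C_1, n)$ for $\tT(\cC)$; Theorem \ref{thm:MT} then yields $\jJ(C, n)$ for $\cC$.
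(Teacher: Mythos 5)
Your reduction to $\tT(\cC)$ via Theorem \ref{thm:MT} and the preliminary normalizer/component-counting steps are fine, but the core of your argument has a genuine gap, located exactly where you flag ``the delicate point.'' Your plan requires propagating hypothesis (3) from the $T$-action on $X$ to the $\bar S$-action on the component $Y_0\subseteq X^{Q_0}$. Given a $q$-subgroup $\overline G\leq S/Q_0$ with preimage $G\leq S$ containing $Q_0$, hypothesis (3) hands you $G_1\leq G$ of index $\leq C_0$ with $X^{G_1}\neq\emptyset$ --- but it says nothing about \emph{where} those fixed points are. There is no reason for $X^{G_1}$ to meet $X^{Q_0}$, i.e.\ for $X^{\langle G_1,Q_0\rangle}$ to be nonempty, and the further ``normalizer- and component-counting'' round you invoke only helps to pass from a fixed point in $X^{Q_0}$ to one in the chosen component $Y_0$; it cannot manufacture a fixed point inside $X^{Q_0}$ in the first place. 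Iterating hypothesis (3) on $\langle G_1,Q_0\rangle$ does not terminate, since each application may again return a subgroup not containing $Q_0$. The same problem occurs for $q'=p$. More fundamentally, your intermediate goal --- a bounded-index subgroup $T_0\leq T$ of the \emph{mixed-prime} group with $X^{T_0}\neq\emptyset$ --- is strictly stronger than what the theorem asserts, and it is not clear that it follows from hypotheses (1)--(3) at all.

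The paper's proof deliberately avoids this. It never produces a common fixed point for a mixed-prime subgroup. Instead, it extracts abelian subgroups $Q_b\leq Q$ and $P_c\leq P$ of bounded index (each generated by $\leq n$ elements, via Corollary \ref{cor:Jordan-classic} applied separately to the single-prime groups), arranges $Q_b$ to be normal in $T$ with $X^{Q_b}\neq\emptyset$, restricts to a connected component $Y\subseteq X^{Q_b}$ preserved by a bounded-index subgroup of $P_a$, and then decomposes the normal bundle $N\to Y$ according to the characters of $Q_b$ as in Subsection \ref{ss:decompose-equivariant-bundles}. Since the set of characters appearing is $H^*(X)$-bounded, a further bounded-index subgroup $P_c$ fixes each character, hence preserves each summand $N^{Q_b,\xi}$, on which $Q_b$ acts by scalars; by local linearization (the argument of Corollary \ref{cor:Jordan-classic}) the actions of $P_c$ and $Q_b$ on $X$ then \emph{commute}, so $A=P_cQ_b$ is abelian of bounded index and generated by $\leq n$ elements --- with no claim that $X^A\neq\emptyset$. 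If you want to salvage your approach, you would either need to prove the much stronger fixed-point statement for mixed-prime groups (which the hypotheses do not obviously support), or replace the fixed-point goal by this commutation argument.
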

\begin{proof}
This statement is implicit in the proof of Theorem 5.2 in \cite{M3}, so we just
sketch the main ideas and refer to \cite{M3} for more details.

By Theorem \ref{thm:MT} we only need to prove that $\tT(\cC)$ satisfies $\jJ(C,n)$
for some $C$ depending only on $C_0$ and $H^*(X)$.

Let us say that some quantity is $H^*(X)$-bounded if it admits
an upper bound which depends only on $H^*(X)$. Of course, $2n$
is $H^*(X)$-bounded.

Let $T\in\tT(\cC)$ and suppose that $p,q$ are distinct prime
numbers, $P\subseteq T$ (resp. $Q\subseteq T$) is a Sylow
$p$-subgroup (resp. normal Sylow $q$-subgroup) and $T=PQ$. By
assumption, there is a subgroup $Q_0\subseteq Q$ satisfying
$X^{Q_0}\neq\emptyset$ and $[Q:Q_0]\leq C_0$. By Corollary
\ref{cor:Jordan-classic} there is an abelian subgroup
$Q_a\subseteq Q_0$ satisfying $[Q_0:Q_a]\leq\Jor_{2n}$ and
$Q_a$ can be generated by $n$ elements. Hence we have
\begin{equation}
\label{eq:index-Q-Q-a}
[Q:Q_a]\leq C_0\Jor_{2n}.
\end{equation}
Applying the same argument to $P$ we conclude the existence of
an abelian subgroup $P_a\subseteq P$ satisfying $[P:P_a]\leq
C_0\Jor_{2n}$ and such that $P_a$ can be generated by $n$
elements.

The bound (\ref{eq:index-Q-Q-a}) implies that for any $g\in P$ we have
$$[Q_a:Q_a\cap gQ_ag^{-1}]\leq C_0\Jor_{2n}.$$
Since $Q_a$ can be generated by $d$ elements, the group $Q_b=\bigcap_{g\in P}gQ_ag^{-1}$ satisfies
$$[Q_a:Q_b]\leq ((C_0\Jor_{2n})!)^d$$
(see e.g. \cite[Corollary 3.2]{M3}).
Since $Q_b\subset Q_0$ and $X^{Q_0}\neq\emptyset$, we have $X^{Q_b}\neq\emptyset$.
By definition $Q_b$ is a normal subgroup of $T$, so the action of $P_a$ on $X$ preserves
$X^{Q_b}$. By Lemma \ref{lemma:cohom-no-augmenta} the number of connected components of
$X^{Q_b}$ is $H^*(X)$-bounded, so there exists a subgroup $P_b\subseteq P_a$ such that
$[P_a:P_b]$ is $H^*(X)$-bounded and $P_b$ preserves each connected component of $X^{Q_b}$.

Let $Y\subseteq X^{Q_b}$ be any connected component, and let $N\to Y$ be the normal bundle of
its inclusion in $X$. The actions of $Q_b$ and $P_b$ on $Y$ lift to actions on $N$.
Consider the splitting
\begin{equation}
\label{eq:split-N-cx}
N\otimes\CC=\bigoplus_{\xi\in\wh{Q_b}}N^{Q_b,\xi}
\end{equation}
given by the action of $Q_b$ on $N$ (see Subsection
\ref{ss:decompose-equivariant-bundles}). Let $\Xi\subset
\wh{Q_b}$ be the set of characters $\xi$ such that
$N^{Q_b,\xi}\neq 0$. We have $|\Xi|\leq \dim X$, so $|\Xi|$ is
$H^*(X)$-bounded. The action by conjugation of $P_b$ on $Q_b$
induces an action on $\wh{Q_b}$ which necessarily preserves
$\Xi$, so there is a subgroup $P_c\subseteq P_b$ which fixes
each $\xi\in\Xi$ and such that $[P_b:P_c]$ is $H^*(X)$-bounded.
Since $P_c$ fixes each $\xi\in \Xi$, the action of $P_c$ on
$N\otimes\CC$ preserves the splitting (\ref{eq:split-N-cx}).
Since the action of $Q_b$ on each $N^{Q_b,\xi}$ is given by
scalar multiplication, it follows that the actions of $P_c$ and
$Q_b$ on $N\to Y$ commute. By the arguments in the proof of
Corollary \ref{cor:Jordan-classic} this implies that the
actions of $P_c$ and $Q_b$ on $X$ commute. Consequently,
$A:=P_cQ_b$ is an abelian group. Since $[T:A]\leq
[P:P_c][Q:Q_b]$ and both $[P:P_c]$ and $[Q:Q_b]$ are
$H^*(X)$-bounded, it follows that $[T:A]$ is $H^*(X)$-bounded.

Both $P_c$ and $Q_b$ can be generated by $n$ elements because
they are  subgroups of abelian subgroups that can be generated
by $n$ elements. Since the orders of $P_c$ and $Q_b$ are
coprime, this implies that $T$ can be generated by $n$
elements.
\end{proof}

\subsection{Proof of Theorems \ref{thm:main-ham} and \ref{thm:main-sympl}}
Theorem \ref{thm:main-ac-str} follows immediately from
combining Theorem \ref{thm:fixed-pt-symplectic} with Theorem
\ref{thm:test-Jordan} applied to the collection of all finite
subgroups of $\Symp(X,\omega)$.
Similarly, Theorem \ref{thm:main-ham} follows from Theorem
\ref{thm:fixed-pt-hamiltonian} and Theorem
\ref{thm:test-Jordan} applied to the collection of all finite
subgroups of $\Ham(X,\omega)$.

\subsection{Proof of Theorem \ref{thm:2-step-nilpotent}}
We begin with some preliminary results.

We claim that there is a constant $C_0$ such that for any finite subgroup
$\Gamma\subset\Symp(X,\omega)$ acting trivially on $H^1(X)$
and for any prime $p$, any $p$-subgroup $G\subseteq[\Gamma,\Gamma]$
has a subgroup $G_0\subseteq G$ such that $X^{G_0}\neq\emptyset$ and
$[G:G_0]\leq C$. Furthermore, $C_0$ only depends on $H^*(X)$.
This claim follows from Theorem \ref{thm:lift-commutator-action-line-bundle}
and Theorem \ref{thm:main-fixed-point}, using the same arguments as
in Subsection \ref{ss:proof-thm:fixed-pt-symplectic}.

Let $\cC$ be the collection of all finite subgroups $G\subset\Symp(X,\omega)$
for which there exist a finite subgroup $\Gamma\subset\Symp(X,\omega)$ acting
trivially on $H^1(X)$ and satisfying $G\subseteq[\Gamma,\Gamma]$. Combining
the previous claim with Theorem \ref{thm:test-Jordan} we conclude that
$\cC$ satisfies $\jJ(C_1,n)$ for some constant $C_1$ depending on $H^*(X)$.

To conclude the preliminaries, we recall that a well known lemma of Minkowski states
that for any natural number $k$ and any finite group $H\subseteq\GL(k,\ZZ)$ the restriction
to $H$ of the natural map $\GL(k,\ZZ)\to\GL(k,\ZZ/3)$ is injective (see
\cite{Mi}, and \cite[Lemma 1]{S1} for a modern exposition). Hence no finite
subgroup of $\Aut(H^1(X))$ has more than $3^{b_1(X)}$ elements.

We are now ready to prove Theorem \ref{thm:2-step-nilpotent}.

Let $\Gamma\subset\Symp(X,\omega)$ be a finite subgroup. Let $h^1:\Gamma\to\Aut(H^1(X))$
be the natural map. Its image $h^1(\Gamma)$ is a finite subgroup of $\Aut(H^1(X))$ so
by Minkowski's lemma
it has at most $3^{b_1(X)}$ elements. Hence $\Gamma_0=\Ker h^1\subset\Gamma$
satisfies $[\Gamma:\Gamma_0]\leq 3^{b_1(X)}.$
The commutator $G:=[\Gamma_0,\Gamma_0]$ is an element of $\cC$, so there is an
abelian subgroup $A\subseteq G$ satisfying $[G:A]\leq C_1$ and $A$ can
be generated by $n$ elements, where $\dim X=2n$.

For any prime $p$ let $A_p$ be the $p$-part of $A$. Since $A_p\subseteq[\Gamma_0,\Gamma_0]$,
there is a subgroup $A_{p,0}\subseteq A_p$ satisfying $X^{A_{p,0}}\neq\emptyset$ and
$[A_p:A_{p,0}]\leq C_0$. In particular, if $p>C_0$ then $A_{p,0}=A_p$. Hence setting
$A_0:=\prod_pA_{p,0}$ we have the following rough estimate
$$[A:A_0]\leq C_2:=C_0^{\pi(C_0)},$$
where $\pi(C_0)$ denotes the number of primes not bigger than $C_0$.

Define
$A_1=\bigcap_{\phi\in\Aut(G)}\phi(A_0)$.
We have $[G:A_1]\leq C_3$ for some $C_3$ depending only on $C_1C_2$ and $n$
(this is standard, see
e.g. \cite[Corollary 3.2]{M3}), and $A_1$ is a characteristic subgroup of $G$. Since $G$
is a normal subgroup of $\Gamma_0$, it follows that $A_1$ is normal in $\Gamma_0$.

Let $\Gamma_1=\Gamma_0/A_1$. Since $A_1$ is a normal subgroup of $G=[\Gamma_0,\Gamma_0]$, we have $[\Gamma_1,\Gamma_1]=[\Gamma_0,\Gamma_0]/A_1=G/A_1$ and $\Gamma_1/[\Gamma_1,\Gamma_1]=\Gamma_0/[\Gamma_0,\Gamma_0]$.
Hence the exact sequence $$1\to [\Gamma_1,\Gamma_1]\to \Gamma_1\to \Gamma_1/[\Gamma_1,\Gamma_1]\to 1$$ can be rewritten as
$$1\to G/A_1\to \Gamma_1\to \Gamma_0/[\Gamma_0,\Gamma_0]\to 1.$$
By Mann and Su's Theorem \ref{thm:MS}
there exists some constant $r$ depending on $H^*(X)$ such that
for any monomorphism $(\ZZ/p)^s\hookrightarrow \Gamma$ we have
$s\leq r$. This implies that the Sylow $p$-subgroups of $\Gamma$ can
be generated by at most $r(5r+1)/2$ elements (such bound
follows easily from Lemmas
\ref{lemma:maximal-normal-abelian} and \ref{lemma:GHMR}). Consequently, the
abelian group $\Gamma_0/[\Gamma_0,\Gamma_0]$ can be generated by $r(5r+1)/2$
elements.
By Lemma 2.2 in \cite{M1} there is an
abelian subgroup $$A_2\subseteq \Gamma_1$$ satisfying $[\Gamma_1:A_2]\leq
C_4$, where $C_4$ depends only on $|G/A_1|\leq C_3$ and $r(5r+1)/2$;
consequently, $C_4$ can be bounded above by a constant depending
only on $H^*(X)$.

Let $S$ be the preimage of $A_2$ under
the projection map $\Gamma_0\to \Gamma_1$.
Clearly $S$ is solvable, since it fits in an
exact sequence
$$0\to A_1\to S\to A_2\to 0,$$
and $A_1$, $A_2$ are abelian.
In general we cannot expect $S$ to be abelian or $2$-step nilpotent, since the action of $A_2$
by conjugation on $A_1$ might be nontrivial.
We want to prove that, nevertheless, $S$ contains an abelian or $2$-step nilpotent subgroup of bounded index.

For any prime $p$ we have $X^{A_{1,p}}\neq\emptyset$, where $A_{1,p}$ is the $p$-part
of $A_1\subseteq A_0$.
Let $N_p$ be the normal bundle of the inclusion $X^{A_{1,p}}\hookrightarrow X$
and let $N_p=\bigoplus_{\xi\in \wh{A_{1,p}}}N_{p,\xi}$ be its decomposition according
to the characters of $A_{1,p}$ (see Subsection \ref{ss:decompose-equivariant-bundles}).
Let $\Xi_p=\{\xi\in \wh{A_{1,p}}\mid N_{p,\xi}\neq 0\}$.
We may bound, as in Subsection \ref{ss:proof-of-thm-fixed-pt},
$$|\Xi_p|\leq 2n \sum_j b_j(X;\FF_p).$$
The right hand side can be bounded by a constant $B$ depending only on $H^*(X)$, not on $p$.
The action of $A_2$ on $A_1$ by conjugation preserves $A_{1,p}$ and hence permutes the elements of $\Xi_p$; this way we get a morphism
$\sigma_p:A_2\to\Perm(\Xi_p)$.
By Corollary \ref{cor:Jordan-classic} any $\alpha\in\Ker\sigma_p$ acts trivially on $A_{1,p}$
(this is the same idea that is used at the end of the proof of Theorem \ref{thm:test-Jordan}).
We have $[A_2:\Ker\sigma_p]\leq B!$, so the intersection
$$A_3:=\bigcap_{A'\subseteq A_2\atop [A_2:A']\leq B!}A',$$
acts trivially on $A_{1,p}$ for each prime $p$, and hence acts trivially on $A_1$.
Since the rank of $A_2$ is at most $r(5r+1)/2$,
we can bound $[A_2:A_3]\leq C_5$ where $C_5$ depends only on $r$ and $B$, hence only on $H^*(X)$.
Let $N\subseteq S$ be the preimage of $A_3$ under the projection map
$S\to A_2$. Then $N$ is abelian or $2$-step nilpotent and
\begin{align*}
[\Gamma:N] &= [\Gamma:\Gamma_0][\Gamma_0:S][S:N]=[\Gamma:\Gamma_0][\Gamma_1:A_2][A_2:A_3]\\
&\leq 3^{b_1(X)}C_4C_5=:C,
\end{align*}
where $C$ depends only on $H^*(X)$. So the proof of Theorem \ref{thm:2-step-nilpotent} is
complete.

\end{document}